\newtheorem{theorem}{Theorem}[section]
\newtheorem{corollary}[theorem]{Corollary}
\newtheorem{lemma}[theorem]{Lemma}
\newtheorem{proposition}[theorem]{Proposition}
\theoremstyle{definition}
\newtheorem{definition}[theorem]{Definition}
\newtheorem{remark}[theorem]{Remark}
\newtheorem{example}[theorem]{Example}
\newtheorem{question}[theorem]{Question}
\newcommand{\Mod}[1]{{\rm{Mod}}\text{-}#1} 
\newcommand{\PP}{{\mathcal P}}
\newcommand{\CC}{{\mathcal C}}
\newcommand{\DD}{{\mathcal D}}
\newcommand{\SSS}{{\mathcal S}}
\newcommand{\UU}{{\mathcal U}}
\newcommand{\VV}{{\mathcal V}}
\newcommand{\TT}{{\mathcal T}}
\newcommand{\MM}{{\mathcal M}}
\newcommand{\NN}{{\mathcal N}}
\newcommand{\KK}{{\mathcal K}}
\newcommand{\LL}{{\mathcal L}}
\newcommand{\XX}{{\mathcal X}}
\newcommand{\YY}{{\mathcal Y}}
\newcommand{\Z}{\mathbb{Z}}
\newcommand{\ind}{\mathsf{ind}}
\newcommand{\add}{\mathsf{add}}
\newcommand{\Add}{\mathsf{Add}}
\newcommand{\summand}{\mathsf{smd}}
\newcommand{\thick}{\mathsf{thick}}
\newcommand{\Hom}{\operatorname{Hom}\nolimits}
\newcommand{\silt}{\operatorname{silt}\nolimits}
\newcommand{\modu}[1]{{\rm{mod}}\text{-}#1} 
\newcommand{\proj}[1]{{\rm{proj}}\text{-}#1} 
\newcommand{\sta}[1]{{\underline{{\rm{mod}}}}\text{-}#1} 
\newcommand{\homo}[3]{{\rm{Hom}}_{#1}(#2, #3)} 
\newcommand{\ehomo}[2]{{\rm{End}}_{#1}(#2)} 
\newcommand{\fl}[1]{{\rm{f.l}}\text{-}#1} 
\newtheorem{assumption}[theorem]{Assumption}
\newcommand{\exc}{\operatorname{exp}\nolimits}
\newcommand{\K}{\mathsf{K}}
\newcommand{\D}{\mathsf{D}}
\newcommand{\End}{\operatorname{End}\nolimits}
\newcommand{\Ext}{\operatorname{Ext}\nolimits}
\renewcommand{\AA}{{\mathcal A}}
\newcommand{\RHom}{\mathbf{R}\operatorname{Hom}\nolimits}
\begin{document}

\title{Silting mutation in triangulated categories}
\author{Takuma Aihara and Osamu Iyama}
\address{T. Aihara: Division of Mathematical Science and Physics, 
Graduate School of Science and Technology, Chiba University, Yayoi-cho, Chiba 263-8522, Japan}
\email{taihara@math.s.chiba-u.ac.jp}
\address{O. Iyama: Graduate School of Mathematics, Nagoya University, Chikusa-ku, Nagoya,
464-8602 Japan}
\email{iyama@math.nagoya-u.ac.jp}
\thanks{2010 {\em Mathematics Subject Classification.} 16E30, 18E30}

\begin{abstract}
In representation theory of algebras the notion of `mutation' often plays important roles,
and two cases are well known, i.e. `cluster tilting mutation' and `exceptional mutation'.
In this paper we focus on `tilting mutation', which has a disadvantage that it is often 
impossible, i.e. some of summands of a tilting object can not be replaced to get a new tilting 
object. The aim of this paper is to take away this disadvantage by introducing `silting mutation'
for silting objects as a generalization of `tilting mutation'. We shall develop a basic
theory of silting mutation.
In particular, we introduce a partial order on the set of silting objects and establish the relationship with `silting mutation' by generalizing the theory of
Riedtmann-Schofield and Happel-Unger. We show that iterated silting mutation act transitively on the set of silting objects for local, hereditary or canonical algebras.
Finally we give a bijection between silting subcategories and certain t-structures.
\end{abstract}
\maketitle
\tableofcontents


\section{Introduction}

In representation theory of algebras the notion of `mutation' often plays important roles. 
Mutation is an operation for a certain class of objects (e.g. cluster tilting objects)
in a fixed category to construct a new object from a given one by replacing
a summand.
Two important cases are well-known: One is `cluster tilting mutation' \cite{BMRRT,IY,IW}
for cluster tilting objects which is applied for categorification of
Fomin-Zelevinsky cluster algebras,
and the other is `exceptional mutation' for exceptional sequences 
which is used to study the structure of derived categories of algebraic varieties \cite{GR,C}.

From Morita theoretic viewpoint \cite{Ric1}, tilting objects are the most important
class of objects, and `tilting mutation' for tilting objects has
been studied by several authors. `Tilting mutation' has the origin in
BGP (=Bernstein-Gelfand-Ponomarev) reflection functor \cite{BGP} in quiver
representation theory, and reflection functors are understood as a special class
of APR (=Auslander-Platzeck-Reiten) tilting modules \cite{APR},
which are `tilting mutations' obtained by replacing a simple summand of
the tilting $A$-modules $A$. In 1991 general `tilting mutation' for tilting modules was
introduced by Riedtmann-Schofield \cite{RS} in their study of combinatorial aspects
of tilting theory. It has been shown by Happel-Unger \cite{HU} that `tilting mutation'
is closely related to the partial order of tilting modules given by the inclusion
relation of the associated t-structures, and this is a big advantage of `tilting mutation'
which `cluster tilting mutation' and `exceptional mutation' do not have.
Their beautiful theory of `tilting mutation' has a lot of important applications, especially in the study of
cluster categories and preprojective algebras \cite{BMRRT,IR,BIRS,BIKR,SY}.
Another important source of `tilting mutation' is modular representation theory, 
where `tilting mutation' for tilting complexes was introduced by 
Okuyama and Rickard \cite{O,Ric2,HK}, and has played an important role 
in the study of Brou\'e's abelian defect group conjecture.

It is remarkable that `tilting mutation' has a big disadvantage
that it is often impossible, i.e. some of summands of a tilting
object can not be replaced to get a new tilting object. So it is usual that we can not
get sufficiently many tilting objects by iterated `tilting mutation'.
For example the tilting $A$-module $DA$ usually can not be obtained by
iterated `tilting mutation' of $A$ even for the case of hereditary algebras $A$.
The aim of this paper is to take away this disadvantage by introducing 
`\emph{silting mutation}' for \emph{silting objects}. In this context
`tilting mutation' should be understood as a special case of `silting mutation'.
Actually classical notion of APR and BB tilting modules and Okuyama-Rickard complexes
can be understood as special cases of `silting mutation'  (Theorems \ref{prop:APR is APR} and \ref{prop:1_right_mu}).
Silting objects are generalization of tilting objects,
and sometimes appeared in representation theory mainly in the study of
t-structures, e.g. Keller-Vossieck \cite{KV}, 
Hoshino-Kato-Miyachi \cite{HKM}, Assem-Salorio-Trepode \cite{AST} and Wei \cite{W}.
A point is that `silting mutation' is always possible in the sense that
any summand of a silting object always can be replaced to get a new
silting object (Theorem \ref{thm:mutation is silting}).  
Hence it is natural to hope that `silting mutation' gives us sufficiently many
silting objects in triangulated categories. We pose the following question
(Question \ref{transitive question2}),
where the corresponding property for tilting objects is usually not satisfied.

\begin{question}\label{transitive question}
Let $A$ be a finite dimensional algebra over a field.
When does $A$ satisfy the following property (T) (respectively, (T$'$))?

(T) (respectively, (T$'$)) The action of iterated irreducible `silting mutation'
(respectively, iterated `silting mutation') on the set of basic silting objects in $\K^{\rm b}(\proj{A})$ is transitive.
\end{question}

We shall show the following partial answers in Corollary \ref{corollary indecomposable case} and Theorem \ref{hereditary case}.

\begin{theorem}
If $A$ is either a local algebra, a hereditary algebra or a canonical algebra, then (T) is satisfied.
\end{theorem}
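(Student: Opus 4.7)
The plan is to treat the three classes of algebras separately, with the canonical case reducing, via derived equivalence, to a hereditary setting.

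\textbf{Local case.} When $A$ is local, $\proj{A}$ has a unique indecomposable, namely $A$. The general principle that the number of indecomposable summands of a basic silting object equals the number of indecomposable summands of $A$ forces any basic silting object to be indecomposable. Using that $\Hom_{\K^{\rm b}(\proj{A})}(A[i],A[j][k])$ is nonzero exactly when $k = i-j$, together with a direct analysis of minimal complexes in $\K^{\rm b}(\proj{A})$ (where nontrivial differentials necessarily lie in the radical and then produce nonzero $\Hom(T,T[1])$), I would identify the basic silting objects as precisely the shifts $A[n]$ for $n \in \Z$. The irreducible left silting mutation of $A[n]$ at its unique summand is the cone of the minimal left approximation $A[n] \to 0$, which is $A[n+1]$; symmetrically, the right mutation is $A[n-1]$. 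Hence iterated irreducible silting mutation connects every $A[n]$ to $A$.

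\textbf{Hereditary case.} For hereditary $A$, every object of $\D^{\rm b}(\modu{A}) = \K^{\rm b}(\proj{A})$ decomposes as a direct sum of shifts of indecomposable $A$-modules, so any basic silting object has the form $T \cong \bigoplus_n T_n[n]$ with each $T_n$ a module. The plan has two stages. First, I would mutate an arbitrary such $T$ to a shift of a classical tilting $A$-module by iterated irreducible silting mutation decreasing a well-chosen invariant, for example the spread $\max\{n : T_n \neq 0\} - \min\{n : T_n \neq 0\}$; this step crucially exploits that silting mutation is always possible, whereas classical tilting mutation is not. Second, once $T$ is a tilting $A$-module, I would invoke the classical connectivity of the tilting mutation graph for hereditary algebras (Riedtmann--Schofield / Happel--Unger at simple summands) together with Theorem~\ref{prop:APR is APR}, which identifies APR tilting as a special case of irreducible silting mutation, to connect $T$ to $A$.

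\textbf{Canonical case.} A canonical algebra $A$ is derived equivalent to the bounded derived category of coherent sheaves on a weighted projective line $\mathbb{X}$, which is a hereditary abelian category. Silting subcategories, the partial order on them, and silting mutation all transport across triangle equivalences, so it suffices to prove transitivity of iterated irreducible silting mutation in the derived category of $\mathbb{X}$. The argument of the hereditary case adapts to this setting, with minor modifications to accommodate the sheaf-theoretic description of indecomposables.

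\textbf{Main obstacle.} The heart of the proof is the first stage of the hereditary case: showing that any basic silting object can be reduced, in finitely many irreducible silting mutations, to a shifted tilting module. This requires both a monovariant that strictly decreases under a carefully chosen mutation and a verification that no obstruction arises at the boundary (for instance when the spread is already one and a single summand straddles two degrees). The canonical case adds the further task of transporting silting mutation through the Geigle--Lenzing equivalence, which is essentially formal but still needs verification.
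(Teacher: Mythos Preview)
Your local case is essentially the paper's Corollary~\ref{corollary indecomposable case}, and that part is fine.

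The hereditary argument, however, rests on a false premise. You invoke ``the classical connectivity of the tilting mutation graph for hereditary algebras (Riedtmann--Schofield / Happel--Unger)'' to pass between tilting modules, but this connectivity fails in general: the paper's introduction explicitly notes that $DA$ is usually unreachable from $A$ by iterated tilting mutation even for hereditary $A$, and the comparison table records tilting mutation as \emph{not} transitive in the hereditary case. Riedtmann--Schofield proved connectedness of the simplicial complex of (partial) tilting modules, and Happel--Unger related mutation to the partial order, but neither yields connectedness of the irreducible-mutation graph once $A$ is representation-infinite; over the Kronecker quiver, for instance, the preprojective and preinjective tilting modules lie in different components. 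So even granting your first stage---which you correctly flag as the main obstacle and leave only sketched---the second stage does not go through, and the canonical case inherits the same gap.

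The paper takes a completely different route for the hereditary and canonical cases (Theorem~\ref{hereditary case}): it passes through \emph{exceptional sequences}. Any basic silting object is reordered into a full exceptional sequence (Proposition~\ref{silting to exceptional}, using Lemmas~\ref{only one extension} and~\ref{no cycle}); the braid group $B_n\times\Z^n$ acts transitively on full exceptional sequences by the results of Crawley-Boevey, Ringel, and Kussin--Meltzer (Theorem~\ref{exceptional transitivity}); and each braid generator $\sigma_i^{\pm1}$ can be realized, up to shifts, by iterated irreducible silting mutation (Lemma~\ref{exceptional mutation by silting mutation}). Remaining shift discrepancies are then absorbed by Proposition~\ref{first transitivity}. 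This bypasses any appeal to connectivity of the tilting graph and treats hereditary and canonical algebras uniformly via $\D^{\rm b}(\AA)$ for a hereditary abelian $\AA$.
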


It will be shown in \cite{A2} that (T) is satisfied also for representation-finite symmetric algebras.
On the other hand, there is a symmetric algebra such that (T) is not satisfied \cite{AGI}.
We do not know any algebra such that (T$'$) is not satisfied.

As a basic tool of the study of silting objects, we shall introduce
a partial order on silting objects (Theorem \ref{partial order}) and establish the relationship with
`silting mutation' (Theorem \ref{two quivers}) by generalizing the theory of Riedtmann-Schofield and
Happel-Unger for tilting modules.
In particular, Question \ref{transitive question} is equivalent to the following question.

\begin{question}
Let $A$ be a finite dimensional algebra over a field.
When is the Hasse quiver of the partially ordered set of basic silting objects in $\K^{\rm b}(\proj{A})$ connected?
\end{question}


The comparison of three kinds of mutation is explained by the following table.
\[\begin{array}{|c||c|c|c|}
\hline
&\mbox{always possible}&\mbox{partial order exists}&\mbox{transitive for hereditary case}\\ \hline\hline
\mbox{cluster tilting mutation}&\mbox{Yes}&\mbox{No}&\mbox{Yes}\\ \hline
\mbox{silting mutation}&\mbox{Yes}&\mbox{Yes}&\mbox{Yes}\\
\cup&&&\\
\mbox{tilting mutation}&\mbox{No}&\mbox{Yes}&\mbox{No}\\ \hline
\mbox{exceptional mutation}&\mbox{Yes}&\mbox{No}&\mbox{Yes}\\ \hline
\end{array}\]

In our paper we study basic properties of silting objects/subcategories.
We show that non-isomorphic indecomposable objects in any silting subcategory form a basis of the Grothendieck group of the triangulated category
(Theorem \ref{grothendieck}).
In particular all basic silting objects have the same number of indecomposable summands (Corollary \ref{thm:number of indecomposable direct summands}).
We also introduce `silting reduction' (Theorem \ref{silting reduction}), which gives a bijection between silting subcategories containing a certain fixed subcategory
and silting subcategories in a certain quotient triangulated category.

In section \ref{The correspondence between silting subcategories and t-structures}
we study in detail the relationship between silting subcategories and t-structures under the assumption that the triangulated categories have arbitrary coproducts,
and improve some of pioneering results of Hoshino-Kato-Miyachi \cite{HKM}.
The advantage of this setting is that each set of compact objects gives rise to a torsion pair (Theorem \ref{torsion pair}),
which is not the case for the setting of section \ref{section: silting subcategories}.
This basic result seems to be new and to have independent interest.
As an application, we establish a one-to-one correspondence between silting subcategories
and t-structures satisfying certain conditions (Theorem \ref{correspondence}).

We notice that a different generalization of `tilting mutation' was given in \cite{IO,HX}.
Also some aspects of mutation were recently discussed in \cite{BRT,L,KY}.

Parts of results in this paper were presented in Trondheim (March 2009),
Nagoya (June 2009), Matsumoto (October 2009), Shizuoka (November 2009) and Tokyo (August 2010).

\medskip
\noindent
{\bf Notations }
Let $\TT$ be an additive category.
For morphisms $f:X\to Y$ and $g:Y\to Z$ in a category, we denote by $gf:X\to Z$ the composition.
We say that a morphism $f:X\to Y$ is \emph{right minimal} if any morphism $g:X\to X$
satisfying $fg=f$ is an isomorphism. Dually we define a \emph{left minimal} morphism.

For a collection $\XX$ of objects in $\TT$,
we denote by $\add\XX$ (respectively, $\Add\XX$) the smallest full subcategory of $\TT$
which is closed under finite (respectively, arbitrary) coproducts, summands and isomorphisms
and contains $\XX$.
We denote by $\summand\XX$ the smallest full subcategory of $\TT$ which is closed under summands and contains $\XX$.
When we say that $\XX$ is a \emph{subcategory} of $\TT$, we always assume that
$\XX$ is full and satisfies $\XX=\add\XX$.

Let $\XX$ be a subcategory of $\TT$.
We say that a morphism $f:X\to Y$ is a \emph{right $\XX$-approximation} of $Y$ 
if $X\in \XX$ and $\homo{\TT}{X}{f}$ is surjective for any $X\in \XX$. 
We say that $\XX$ is \emph{contravariantly finite} if any object in $\TT$ has a right $\XX$-approximation. 
Dually, we define a \emph{left $\XX$-approximation} and a \emph{covariantly finite subcategory}. 
We say that $\XX$ is \emph{functorially finite} if it is contravariantly and covariantly finite. 

When $\TT$ is a triangulated category, we denote by $\thick\XX$ the smallest thick subcategory of $\TT$ containing $\XX$.
For collections $\XX$ and $\YY$ of objects in $\TT$, we denote by
$\XX*\YY$ the collection of objects $Z\in\TT$ appearing in a triangle
$X\to Z\to Y\to X[1]$ with $X\in\XX$ and $Y\in\YY$.
We set
\[\begin{array}{ccccc}
\XX^\perp   &=& \XX^{\perp_{\TT}} &:=& \{T\in \TT\ |\ \Hom_{\TT}(\XX,T)=0\},\\
{}^\perp\XX &=& {}^{\perp_{\TT}}\XX &:=& \{T\in \TT\ |\ \Hom_{\TT}(T,\XX)=0\}
\end{array}\]

For an additive category $\AA$, we denote by $\K^{\rm b}(\AA)$ the homotopy
category of bounded complexes over $\AA$.
For an abelian category $\AA$, we denote by $\D^{\rm b}(\AA)$ the bounded
derived category of $\AA$.

For a ring $A$, we denote by $\Mod A$ (respectively, $\modu A$) the category
of all (respectively, finitely generated) right $A$-modules,
by $\proj A$ the category of finitely generated projective $A$-modules.
We denote by $J_A$ the Jacobson radical of $A$.
When $A$ is a finite dimensional algebra over a field $k$,
we denote by $D:=\Hom_k(-,k):\modu A\leftrightarrow\modu A^{\rm op}$ the $k$-duality.

\begin{definition}
Let $\TT$ be a triangulated category and $(\XX,\YY)$ a pair of subcategories of $\TT$.
\begin{itemize}
\item We say that $(\XX,\YY)$ is a \emph{torsion pair} 
if $\Hom_{\TT}(\XX,\YY)=0$ and $\XX*\YY=\TT$.
\item We say that $(\XX,\YY)$ is a \emph{t-structure} \cite{BBD}
if $(\XX[1],\YY)$ is a torsion pair and $\XX[1]\subset\XX$.
In this case $\XX\cap\YY$ is called the \emph{heart}.
\item We say that $(\XX,\YY)$ is a \emph{co-t-structure} \cite{P1} 
if $(\XX[-1],\YY)$ is a torsion pair and $\XX\subset\XX[1]$.
In this case $\XX\cap\YY$ is called the \emph{coheart}.
\item We say that a torsion pair $(\XX,\YY)$ is a \emph{stable t-structure} 
\cite{M} if $\XX=\XX[1]$.
\end{itemize}
\end{definition}

\medskip
\noindent
{\bf Acknowledgements }
The second author would like to thank Jiaqun Wei for stimulating discussion.
The authors would like to thank David Pauksztello, Apostolos Beligiannis and Sonia Trepode for useful information.


\section{Silting subcategories}\label{section: silting subcategories}

Let $\TT$ be a triangulated category. We do not assume anything else on $\TT$ unless otherwise stated.

Throughout this paper we write the vanishing condition $\Hom_{\TT}(X,Y[i])=0$ for any $i\in\Z$ by
\[\Hom_{\TT}(X,Y[\Z])=0\]
simply.  Similarly we often use
\[\Hom_{\TT}(X,Y[>0])=0,\ \Hom_{\TT}(X,Y[<0])=0,\ \Hom_{\TT}(X,Y[\neq0])=0\ \mbox{ and so on.}\]

\subsection{Definition and basic properties}

In this subsection we introduce silting subcategories/objects of triangulated categories
and study their basic properties.

\begin{definition}\label{definition of silting}
Let $\MM$ be a subcategory of a triangulated category $\TT$.
\begin{itemize}
\item[(a)] We say that $\MM$ is \emph{silting} if $\Hom_{\TT}(\MM, \MM[>0])=0$ and $\TT=\thick\MM$. 
We denote by $\silt\TT$ the collection of silting subcategories in $\TT$.
\item[(b)] We say that $\MM$ is \emph{tilting} if $\Hom_{\TT}(\MM, \MM[\neq0])=0$ and $\TT=\thick\MM$.
\item[(c)] We say that an object $M\in\TT$ is \emph{silting} (respectively, \emph{tilting}) if so is $\add M$.
\end{itemize}
\end{definition}

The following examples give typical classes of tilting/silting objects.

\begin{example}
\begin{itemize}
\item[(a)] Let $A$ be a ring.
Then $A$ regarded as a stalk complex is a tilting object in $\K^{\rm b}(\proj A)$.
More generally, any tilting complex of $A$ is a tilting object in $\K^{\rm b}(\proj A)$.
\item[(b)] Let $A$ be a differential graded ring and let $\D(A)$ be the derived category of $A$.
If $H^i(A)=0$ for any $i>0$, then the thick subcategory $\thick(A)$ of $\D(A)$ generated by $A$
has a silting object $A$.
\end{itemize}
\end{example}

The above example (a) is standard among triangulated categories with tilting objects in the following sense:

\begin{proposition}\cite{K}
Let $\TT$ be an algebraic triangulated category.
If $\TT$ has a tilting object $M$, then $\TT$ is triangle equivalent
to $\K^{\rm b}(\proj\End_{\TT}(M))$.
\end{proposition}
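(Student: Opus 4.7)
The plan is to invoke Keller's DG-enhancement theorem for algebraic triangulated categories, so the main content is a reduction: we produce a DG algebra whose cohomology is concentrated in degree zero and equal to $A:=\End_{\TT}(M)$, and then transport the equivalence across the resulting quasi-isomorphism.

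First I would use the hypothesis that $\TT$ is algebraic, which means (by Keller's definition) that $\TT$ is triangle equivalent to the stable category of some Frobenius exact category, and hence admits a canonical DG-enhancement $\AA$ with $H^0(\AA)\simeq\TT$. Lifting $M$ to an object of $\AA$, set
\[
B := \RHom_{\AA}(M,M),
\]
which is naturally a DG algebra satisfying $H^i(B)\cong\Hom_{\TT}(M,M[i])$ for every $i\in\Z$. The tilting hypothesis $\Hom_{\TT}(M,M[\neq 0])=0$ then forces $H^i(B)=0$ for $i\neq 0$, so the natural truncation/inclusion maps give a chain of quasi-isomorphisms of DG algebras between $B$ and the ordinary algebra $A=H^0(B)=\End_{\TT}(M)$.

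Next I would apply Keller's theorem (the statement that a compactly generated algebraic triangulated category is triangle equivalent to $\D(B)$ for $B=\RHom(M,M)$, restricting to an equivalence between the thick subcategory generated by the generator and $\mathrm{per}(B)$). Because $\TT=\thick M$ by the silting/tilting axiom, the functor $\RHom_{\AA}(M,-)$ induces a triangle equivalence $\TT\simeq\mathrm{per}(B)$ sending $M$ to $B$. Combined with the quasi-isomorphism $B\simeq A$ established above, which yields a triangle equivalence $\mathrm{per}(B)\simeq\mathrm{per}(A)$, and with the identification $\mathrm{per}(A)=\K^{\rm b}(\proj A)$ that holds for any ordinary ring, we obtain the desired equivalence $\TT\simeq\K^{\rm b}(\proj\End_{\TT}(M))$.

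The main obstacle, and essentially the only nontrivial step, is the appeal to Keller's theorem itself, which supplies both the DG-enhancement formalism and the recognition of $\mathrm{per}(B)$ as $\thick M$; everything else is the easy observation that the tilting vanishing condition collapses the DG algebra $B$ to its zeroth cohomology. I would therefore state the proof as a short reduction, citing Keller's original result rather than reproving the DG-algebraic machinery.
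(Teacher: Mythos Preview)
The paper does not give a proof of this proposition at all: it is simply stated with a citation to Keller \cite{K} and no argument. Your sketch is the standard reduction one extracts from Keller's work (DG enhancement, $B=\RHom(M,M)$ has cohomology concentrated in degree zero by the tilting hypothesis, hence $\mathrm{per}(B)\simeq\mathrm{per}(A)=\K^{\rm b}(\proj A)$), so there is nothing to compare against; your outline is correct and is precisely what the citation is pointing to.
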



On the other hand, we have the following necessary conditions for existence of silting/tilting subcategories.

\begin{proposition}\label{silting vanishing}
Let $\TT$ be a triangulated category.
If $\TT$ contains a silting (respectively, tilting) subcategory,
then for any $X,Y\in\TT$ we have $\Hom_{\TT}(X,Y[i])=0$ for $i\gg0$ (respectively, $|i|\gg0$).
\end{proposition}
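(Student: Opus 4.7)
The plan is a two-stage dévissage, exploiting that both variables of $\Hom$ range over the thick subcategory generated by $\MM$.

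For fixed $X\in\TT$, I would define
\[\YY_X=\{Y\in\TT \mid \Hom_{\TT}(X,Y[i])=0\text{ for }i\gg0\}\]
(in the tilting case, for $|i|\gg0$). The first step is to check that $\YY_X$ is a thick subcategory of $\TT$. Closure under shifts is immediate: if $\Hom_{\TT}(X,Y[i])=0$ for $i>N$, then $\Hom_{\TT}(X,Y[n][i])=0$ for $i>N-n$. Closure under summands is obvious. Closure under triangles follows from the long exact sequence obtained by applying $\Hom_{\TT}(X,-)$ to a triangle $Y'\to Y\to Y''\to Y'[1]$: if $Y',Y''\in\YY_X$ with bounds $N',N''$, then $\Hom_{\TT}(X,Y[i])=0$ for $i>\max(N',N'')$. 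The tilting version is checked identically, with $\max(|N'|,|N''|)$ in place of $\max(N',N'')$.

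Symmetrically, for fixed $M\in\MM$ I would define
\[\XX_M=\{X\in\TT \mid \Hom_{\TT}(X,M[i])=0\text{ for }i\gg0\}\]
(resp.\ for $|i|\gg0$), which is a thick subcategory by the same argument. The silting hypothesis $\Hom_{\TT}(\MM,\MM[>0])=0$ tells us that $\MM\subseteq\XX_M$ (with uniform bound $N=0$), and so $\thick\MM\subseteq\XX_M$. Since $\TT=\thick\MM$, we conclude $\XX_M=\TT$ for every $M\in\MM$. In the tilting case, the hypothesis $\Hom_{\TT}(\MM,\MM[\neq0])=0$ gives the same conclusion.

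Now fix $X\in\TT$. By the previous step, for every $M\in\MM$ we have $X\in\XX_M$, i.e.\ $\Hom_{\TT}(X,M[i])=0$ for $i\gg0$; equivalently $M\in\YY_X$. Thus $\MM\subseteq\YY_X$, and since $\YY_X$ is thick, $\YY_X\supseteq\thick\MM=\TT$. Hence $\YY_X=\TT$, which is exactly the desired statement. The tilting case follows by the same final dévissage with $|i|\gg0$.

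The argument is essentially routine once one sets up the two thick subcategories $\XX_M$ and $\YY_X$; there is no real obstacle, only the verification that these classes are closed under triangles, which is standard from the long exact sequence of $\Hom$.
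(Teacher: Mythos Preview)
Your proof is correct and follows essentially the same two-stage d\'evissage as the paper: first show that every object $X$ has the vanishing property against each $M\in\MM$ (the paper packages this as a single thick subcategory $\UU=\bigcap_{M}\XX_M$, while you work with the individual $\XX_M$, but this is cosmetic), then use this to see $\MM\subseteq\YY_X$ and conclude. The paper's $\VV_X$ is exactly your $\YY_X$.
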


\begin{proof}
We only show the statement for silting subcategories.
Let $\UU:=\{X\in\TT\ |\ \Hom_{\TT}(X,M[\gg0])=0$ for any $M\in\MM\}$.
Then $\UU$ is a thick subcategory of $\TT$ containing $\MM$.
Thus we have $\UU=\TT$ since $\thick\MM=\TT$.
For any $X\in\TT$, let $\VV_X:=\{Y\in\TT\ |\ \Hom_{\TT}(X,Y[\gg0])=0\}$.
Then we have $\MM\subset\VV_X$ by the above argument.
Since $\VV_X$ is a thick subcategory of $\TT$ containing $\MM$, we have $\VV_X=\TT$ again.
Thus the assertion holds.
\end{proof}

Immediately we have the following observation.

\begin{example}
Let $A$ be a finite dimensional algebra over a field $k$.
\begin{itemize}
\item[(a)] $\D^{\rm b}(\modu A)$ contains silting subcategories if and only if ${\rm gl.dim} A<\infty$.
\item[(b)] Assume that $A$ is self-injective.
Then the stable module category $\sta{A}$ of $\modu{A}$ contains silting subcategories if and only if $A$ is semisimple.
\end{itemize}
\end{example}

\begin{proof}
We only have to show `only if' part.
Both for the cases (a) and (b), we have $\Ext_A^i(A/J_A,A/J_A)\neq0$ for any $i>0$.
Thus the assertions follow from Proposition \ref{silting vanishing}.
\end{proof}

It is often the case that triangulated categories have many silting subcategories which are not tilting.
But there are important triangulated categories such that all silting subcategories are tilting.


\begin{definition}
Let $\ell$ be an integer. We call a $k$-linear $\Hom$-finite triangulated category $\TT$ \emph{$\ell$-Calabi-Yau}
if there is a bifunctorial isomorphism $\homo{\TT}{-}{?}\simeq D\homo{\TT}{?}{-[\ell]}$. 
\end{definition}

One can easily check the following statements.

\begin{lemma}\label{prop:CY}
Let $\ell$ be an integer and $\TT$ an $\ell$-Calabi-Yau triangulated category.
If $\TT\neq0$, then the following assertions hold:
\begin{enumerate}[$(1)$]
\item If $\ell=0$, then any silting subcategory of $\TT$ is tilting; 
\item If $\ell>0$, then there exist no silting subcategories of $\TT$;
\item If $\ell<0$, then there exist no tilting subcategories of $\TT$.
\end{enumerate}
\end{lemma}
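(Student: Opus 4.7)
The plan is to exploit the Calabi--Yau duality $\Hom_{\TT}(X,Y)\simeq D\Hom_{\TT}(Y,X[\ell])$ in each case, together with the fact that $\TT\neq 0$ forces the existence of a nonzero object $M$ in any silting or tilting subcategory (since $\thick\MM=\TT$), so $\End_{\TT}(M)\neq 0$.

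For (1), suppose $\MM$ is silting. I already have $\Hom_{\TT}(\MM,\MM[>0])=0$ from the definition, so to upgrade silting to tilting I only need to rule out negative Ext. For any $M,N\in\MM$ and $i>0$, the $0$-Calabi--Yau isomorphism gives
\[\Hom_{\TT}(M,N[-i])\simeq D\Hom_{\TT}(N[-i],M)=D\Hom_{\TT}(N,M[i])=0,\]
where the last equality uses the silting hypothesis applied to the pair $(N,M)$. Hence $\Hom_{\TT}(\MM,\MM[<0])=0$ and $\MM$ is tilting.

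For (2) and (3), I would argue by contradiction using a single nonzero $M\in\MM$. Applying the $\ell$-Calabi--Yau isomorphism to the pair $(M,M)$ yields
\[\Hom_{\TT}(M,M[\ell])\simeq D\End_{\TT}(M)\neq 0.\]
If $\ell>0$, this contradicts the silting vanishing $\Hom_{\TT}(\MM,\MM[>0])=0$, proving (2). If $\ell<0$, it contradicts the tilting vanishing $\Hom_{\TT}(\MM,\MM[\neq 0])=0$, proving (3).

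There is no real obstacle here; the only point worth checking is that $\MM$ genuinely contains a nonzero object, which follows from $\TT\neq 0$ and $\thick\MM=\TT$, since the thick closure of the zero subcategory is zero. Everything else is a direct application of the Calabi--Yau duality, so the lemma reduces to three one-line computations.
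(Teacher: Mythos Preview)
Your proof is correct and follows essentially the same approach as the paper: for (2) and (3) you use the key observation $\Hom_{\TT}(M,M[\ell])\simeq D\End_{\TT}(M)\neq 0$ for a nonzero $M\in\MM$, exactly as the paper does, and your argument for (1) is the natural unpacking of what the paper abbreviates as ``similarly.'' Your explicit remark that $\thick\MM=\TT\neq 0$ forces $\MM$ to contain a nonzero object is a welcome clarification that the paper leaves implicit.
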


\begin{proof}
Since any non-zero object $X\in\TT$ satisfies $\homo{\TT}{X}{X[\ell]}\simeq D\homo{\TT}{X}{X}\neq0$,
the assertions (2) and (3) follow. We can prove (1) similarly.
\end{proof}

Immediately we have the following observation.

\begin{example}
Let $A$ be a finite dimensional symmetric algebra over a field.
Then any silting object in $\K^{\rm b}(\proj A)$ is tilting.
\end{example}

\begin{proof}
We know that $\K^{\rm b}(\proj A)$ is a 0-Calabi-Yau triangulated category by Auslander-Reiten duality
\cite{H}. Thus the assertion follows from Lemma \ref{prop:CY}.
\end{proof}

Notice that we do not know whether there exist $\ell$-Calabi-Yau triangulated categories
containing a silting subcategory and $\ell<0$.

We end this subsection with the following remark, where we say that a category
is \emph{skeletally small} if isomorphism classes of objects form a set.

\begin{remark}\label{T is small}
If a triangulated category $\TT$ has a skeletally small silting subcategory,
then $\TT$ is also skeletally small (e.g. Proposition \ref{generate2}).
\end{remark}


\subsection{Partial order on silting subcategories}

The aim of this subsection is to introduce a partial order on silting subcategories
as a generalization of the partial order on tilting modules introduced by
Riedtmann-Schofield \cite{RS} and Happel-Unger \cite{HU}.
Our main result in this subsection is Theorem \ref{partial order} below.

\begin{definition}
For $\MM,\NN\in\silt\TT$, we write $\MM\ge\NN$ if $\Hom_{\TT}(\MM, \NN[>0])=0$.
\end{definition}

\begin{theorem}\label{partial order}
$\ge$ gives a partial order on $\silt\TT$.
\end{theorem}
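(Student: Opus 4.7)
The plan is to verify reflexivity, transitivity, and antisymmetry in turn.

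Reflexivity $\MM\ge\MM$ is immediate from the silting condition $\Hom_\TT(\MM,\MM[>0])=0$.

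For transitivity, suppose $\MM\ge\NN$ and $\NN\ge\LL$. The approach rests on a structural description of the right perpendicular of a silting subcategory: I claim $\{X\in\TT\mid \Hom_\TT(\NN,X[i])=0\text{ for all }i>0\}$ coincides with the smallest subcategory of $\TT$ containing $\{N[i]\mid N\in\NN,\ i\ge 0\}$ and closed under extensions and summands. Granted this, the hypothesis $\NN\ge\LL$ places each $L\in\LL$ inside this extension-summand closure, i.e. $L$ is a summand of some iterated extension of non-negative shifts of $\NN$. Pairing with any $M\in\MM$ and $i>0$, a standard long-exact-sequence computation on that filtration reduces $\Hom_\TT(M,L[i])$ to terms of the form $\Hom_\TT(M,N[k+i])$ with $k\ge 0$. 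Since $k+i>0$, every such term vanishes by $\MM\ge\NN$, so $\Hom_\TT(M,L[i])=0$ and hence $\MM\ge\LL$.

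For antisymmetry, suppose $\MM\ge\NN$ and $\NN\ge\MM$. Applying the structural description to $\MM\subset\{X\mid \Hom_\TT(\NN,X[>0])=0\}$ (from $\NN\ge\MM$), and the dual description of the left perpendicular to $\MM\subset\{X\mid\Hom_\TT(X,\NN[>0])=0\}$ (from $\MM\ge\NN$), we place $\MM$ simultaneously inside the extension-summand closures of $\{N[i]\mid i\ge 0\}$ and of $\{N[i]\mid i\le 0\}$. The intersection of these two closures equals $\add\NN$: for $X$ in the intersection, combining the two filtrations and invoking the silting vanishing $\Hom_\TT(\NN[i],\NN[j])=0$ for $j>i$ forces every non-zero-shift layer to split off and cancel, leaving only the degree-zero part. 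Hence $\MM\subset\add\NN$, and by symmetry $\NN\subset\add\MM$; since silting subcategories are closed under $\add$, we conclude $\MM=\NN$.

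The principal obstacle is the structural description of the perpendicular. Since $\NN$ is not assumed contravariantly finite in $\TT$, right $\NN$-approximations may fail to exist and one cannot inductively peel them off. Instead, starting from a bounded $\NN$-presentation of an arbitrary object of $\TT=\thick\NN$ built from triangles, shifts, and summands, one uses the vanishing $\Hom_\TT(\NN,X[>0])=0$ to rotate triangles and systematically eliminate all negative shifts from the presentation. This presentation-rewriting is the technical heart of the argument, and the intersection claim for antisymmetry is its counterpart.
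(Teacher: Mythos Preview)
Your approach is essentially the paper's. Your ``structural description'' of $\{X\mid\Hom_\TT(\NN,X[>0])=0\}$ is exactly Proposition~\ref{generate2}, proved there via a reordering step (Lemma~\ref{generate}(b): silting vanishing lets one arrange the shifts in an iterated extension to be increasing) together with a splitting step (Lemma~\ref{reduction}: a summand of $\MM*\NN$ with no map to $\NN$ already lies in $\MM$); this is precisely your ``presentation-rewriting''. Transitivity then follows as you indicate, and the paper records it as the reformulation $\MM\ge\NN\Leftrightarrow\TT_\MM^{\le0}\supset\TT_\NN^{\le0}$ (Proposition~\ref{another condition}). Your intersection claim for antisymmetry is Proposition~\ref{recover}. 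One small sharpening: you need not ``combine the two filtrations''---once $X\in\summand(\NN*\cdots*\NN[\ell])$ and $\Hom_\TT(X,\NN[>0])=0$, a single application of Lemma~\ref{reduction}(b) gives $X\in\NN$; the paper isolates this as Theorem~\ref{no inclusion} (no proper inclusion among silting subcategories).
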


The following subcategory plays an important role in the proof of Theorem \ref{partial order}.

\begin{definition}
For any $\MM\in\silt\TT$, we define a subcategory of $\TT$ by
\[\TT_{\MM}^{\le0}:=\{X\in\TT\ |\ \Hom_{\TT}(\MM, X[>0])=0\}.\]
We put $\TT_{\MM}^{\le\ell}=\TT_{\MM}^{<\ell+1}:=\TT_{\MM}^{\le0}[-\ell]$ for $\ell\in\Z$.
\end{definition}

Clearly we have $\MM\subset\TT_{\MM}^{\le0}$ and $\TT_{\MM}^{<0}\subset\TT_{\MM}^{\le0}$.
For any non-zero object $X\in\TT$, we have $X[\gg0]\in\TT_{\MM}^{\le0}$ and $X[\ll0]\notin\TT_{\MM}^{\le0}$ 
by Proposition \ref{silting vanishing}.

Crucial results are the following.

\begin{proposition}\label{recover}
For any $\MM\in\silt\TT$, we have $\TT_{\MM}^{\le0}\cap{}^\perp(\TT_{\MM}^{<0})=\MM$.
\end{proposition}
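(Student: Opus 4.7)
The easy inclusion $\MM\subseteq\TT_{\MM}^{\le0}\cap{}^\perp(\TT_{\MM}^{<0})$ follows directly from the silting hypothesis $\Hom_{\TT}(\MM,\MM[>0])=0$: any $M\in\MM$ lies in $\TT_{\MM}^{\le0}$ by definition, and $\Hom_{\TT}(M,Y)=0$ for any $Y\in\TT_{\MM}^{<0}$ since the defining condition of $\TT_{\MM}^{<0}$ explicitly includes $\Hom_{\TT}(\MM,Y[0])=0$.

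For the reverse inclusion, fix $X\in\TT_{\MM}^{\le0}\cap{}^\perp(\TT_{\MM}^{<0})$. The plan is to exhibit a triangle
\[M\xrightarrow{\alpha}X\xrightarrow{\beta}Y\to M[1]\]
with $M\in\MM$ and $Y\in\TT_{\MM}^{<0}$. Once this is available, $X\in{}^\perp(\TT_{\MM}^{<0})$ yields $\beta\in\Hom_{\TT}(X,Y)=0$; then the rotated triangle $X\xrightarrow{0}Y\to M[1]\to X[1]$ begins with a zero morphism, so its cone splits as $M[1]\cong Y\oplus X[1]$, giving $M\cong Y[-1]\oplus X$. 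Thus $X$ is a summand of $M\in\MM=\add\MM$, so $X\in\MM$.

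The construction of the triangle reduces to producing a right $\MM$-approximation $\alpha:M\to X$ with $M\in\MM$. Indeed, for any such $\alpha$ with cone $Y$, applying $\Hom_{\TT}(\MM,-)$ to the triangle and combining $X\in\TT_{\MM}^{\le0}$ with the silting vanishing $\Hom_{\TT}(\MM,M[>0])=0$ forces $\Hom_{\TT}(\MM,Y[i])=0$ for all $i\ge1$ automatically, so $Y\in\TT_{\MM}^{<0}$ is equivalent to surjectivity of $\Hom_{\TT}(\MM,\alpha)$. To produce such an $\alpha$, exploit $X\in\thick\MM$ and induct on the length of a filtration of $X$ by shifts of objects in $\MM$. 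The base case $X\in\MM[i]$ is handled directly, since $X\in\TT_{\MM}^{\le0}$ forces $i\ge0$: one takes $\alpha=\mathrm{id}_X$ when $i=0$, or $\alpha=0:0\to X$ (noting $X\in\TT_{\MM}^{<0}$) when $i\ge1$. The inductive step considers a triangle $A\to X\to B\to A[1]$ in which $A\in\TT_{\MM}^{<0}$, so that the long exact sequence gives $B\in\TT_{\MM}^{\le0}$ with strictly shorter filtration; a right $\MM$-approximation $M\to B$ lifts to $M\to X$ because the obstruction lies in $\Hom_{\TT}(M,A[1])$, which vanishes as $A[1]\in\TT_{\MM}^{<0}$ and $M\in\MM$. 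The octahedral axiom then identifies the cone of $M\to X$ with an extension of $A[1]$ by the cone of $M\to B$; both terms lie in $\TT_{\MM}^{<0}$, so the new cone does too.

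The main technical obstacle is the preliminary reorganization of a filtration of $X\in\thick\MM$ so that its ``top'' subtriangle isolates all positive-shift contributions into $A\in\TT_{\MM}^{<0}$. This is precisely where the silting vanishing plays its role, guaranteeing that extensions of shifts of $\MM$ split whenever the shift does not strictly increase, so that one can always separate the positive-shift pieces. Once this filtration is in hand, the octahedral assembly and the concluding splitting argument are formal consequences of silting vanishing and the hypothesis on $X$.
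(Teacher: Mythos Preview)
Your overall strategy is correct and, at its core, coincides with the paper's: both arguments boil down to showing that any $X\in\TT_{\MM}^{\le0}$ sits in a triangle $M\to X\to Y\to M[1]$ with $M\in\MM$ and $Y\in\TT_{\MM}^{<0}$, after which the hypothesis $X\in{}^\perp(\TT_{\MM}^{<0})$ forces the triangle to split and $X\in\MM$. The paper packages this via the intermediate result that no silting subcategory can be properly contained in another (Theorem~\ref{no inclusion}), which is of independent interest; your version unwinds that proof in place.

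Where your write-up becomes shaky is the inductive construction of the approximation. The triangle $A\to X\to B\to A[1]$ with $A\in\TT_{\MM}^{<0}$ and $B\in\TT_{\MM}^{\le0}$ of strictly shorter length is not what a filtration of $X$ by shifts of $\MM$ naturally provides, and in fact $B$ built from non-positive shifts of $\MM$ need not lie in $\TT_{\MM}^{\le0}$ at all. What the ``preliminary reorganization'' (your last paragraph, which is exactly Lemma~\ref{generate}(b) together with Proposition~\ref{generate2}) actually yields is $X\in\summand(\MM*\MM[1]*\cdots*\MM[\ell])$, hence a triangle $M_0\to X\oplus X'\to Z\to M_0[1]$ with $M_0\in\MM$ and $Z\in\MM[1]*\cdots*\MM[\ell]\subset\TT_{\MM}^{<0}$. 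Composing with the projection gives a right $\MM$-approximation $M_0\to X$ directly, with cone automatically in $\TT_{\MM}^{<0}$ by the long exact sequence; no further induction is needed. So your identification of the reorganization as the crux is exactly right, but the inductive scaffolding around it is both unnecessary and, as written, not quite well-posed. If you replace that paragraph with the direct extraction of the approximation from $X\in\summand(\MM*\TT_{\MM}^{<0})$, your argument becomes a clean direct proof equivalent to the paper's.
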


\begin{proposition}\label{another condition}
Let $\MM,\NN\in\silt\TT$.
Then $\MM\ge\NN$ if and only if $\TT_{\MM}^{\le0}\supset\TT_{\NN}^{\le0}$.
\end{proposition}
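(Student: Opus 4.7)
\noindent The `only if' direction is immediate from the definitions. Since $\NN$ is silting, $\Hom_{\TT}(\NN,\NN[>0])=0$, so $\NN\subseteq\TT_{\NN}^{\le0}$. Hence, assuming $\TT_{\MM}^{\le0}\supseteq\TT_{\NN}^{\le0}$, we obtain $\NN\subseteq\TT_{\MM}^{\le0}$, which by the very definition of $\TT_{\MM}^{\le0}$ reads $\Hom_{\TT}(\MM,\NN[>0])=0$, i.e.\ $\MM\ge\NN$.

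For the `if' direction, suppose $\MM\ge\NN$, so that $\NN\subseteq\TT_{\MM}^{\le0}$. I would first record the closure properties of $\TT_{\MM}^{\le0}$: it is closed under summands, under extensions (applying the long exact sequence of $\Hom_{\TT}(\MM,-)$ to a triangle $X\to Z\to Y\to X[1]$ with $X,Y\in\TT_{\MM}^{\le0}$), and under the shift $[1]$ (since $\Hom_{\TT}(\MM,Y[1][i])=\Hom_{\TT}(\MM,Y[i+1])$ vanishes for every $i>0$ whenever $Y\in\TT_{\MM}^{\le0}$). Consequently, $\TT_{\MM}^{\le0}$ contains the smallest subcategory $\LL_{\NN}$ of $\TT$ that contains $\NN$ and is closed under $[1]$, extensions, and summands.

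The central technical step is the characterization $\TT_{\NN}^{\le0}=\LL_{\NN}$. The inclusion $\LL_{\NN}\subseteq\TT_{\NN}^{\le0}$ follows from the same closure properties applied to $\NN$ in place of $\MM$. For the reverse inclusion, given $X\in\TT_{\NN}^{\le0}$, I would construct a tower of triangles $X_{k+1}\to N_k\to X_k\to X_{k+1}[1]$ with $N_k\in\NN$ and $X_0=X$, and verify inductively, using $\Hom_{\TT}(\NN,\NN[>0])=0$ together with the long exact sequence, that each $X_{k+1}$ remains in $\TT_{\NN}^{\le0}$. Combining this with Proposition \ref{silting vanishing} and $\TT=\thick\NN$, the tower eventually becomes trivial, exhibiting $X$ as a summand of an iterated extension of objects $N_0,N_1[1],\ldots,N_{r-1}[r-1]\in\bigcup_{k\ge0}\NN[k]$, and thus $X\in\LL_{\NN}$. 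Granting this, $\TT_{\NN}^{\le0}=\LL_{\NN}\subseteq\TT_{\MM}^{\le0}$, completing the argument.

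The main obstacle will be the termination of the above tower in the absence of any contravariant finiteness hypothesis on $\NN$ in $\TT$: one cannot simply take iterated right $\NN$-approximations. To control the construction one must use $\thick\NN=\TT$ to represent $X$ by a bounded diagram over $\NN$, and measure decreasing complexity at each step, likely invoking Proposition \ref{recover} to detect stabilization. Once the characterization $\TT_{\NN}^{\le0}=\LL_{\NN}$ is available, the proposition follows from the two paragraphs above.
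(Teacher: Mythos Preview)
Your overall strategy matches the paper's: the easy direction is identical, and for the hard direction you correctly reduce to the identity $\TT_{\NN}^{\le0}=\LL_{\NN}$, which is exactly Proposition~\ref{generate2} in the paper. (A minor slip: you have the labels ``if'' and ``only if'' interchanged.)

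The genuine gap is in your proof of $\TT_{\NN}^{\le0}\subseteq\LL_{\NN}$. Your tower $X_{k+1}\to N_k\to X_k\to X_{k+1}[1]$ keeps $X_{k+1}$ inside $\TT_{\NN}^{\le0}$ only when $N_k\to X_k$ is a right $\NN$-approximation: from the long exact sequence one gets $\Hom_{\TT}(\NN,X_{k+1}[j])=0$ for $j\ge2$ automatically, but $\Hom_{\TT}(\NN,X_{k+1}[1])=0$ needs surjectivity of $\Hom_{\TT}(\NN,N_k)\to\Hom_{\TT}(\NN,X_k)$. You acknowledge this, and your suggested remedy (``represent $X$ by a bounded diagram over $\NN$'') is indeed the right idea, but the invocation of Proposition~\ref{silting vanishing} and Proposition~\ref{recover} does not by itself produce such a diagram or force termination.

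The paper avoids the approximation issue entirely. Using Lemma~\ref{generate} one has a priori that any $X\in\TT=\thick\NN$ lies in $\summand(\NN[n]*\NN[n+1]*\cdots*\NN[\ell])$ for some $n\le\ell$ (here the silting condition $\Hom_{\TT}(\NN,\NN[>0])=0$ is used to reorder the factors). Taking $n$ minimal, Lemma~\ref{reduction} forces $\Hom_{\TT}(\NN[n],X)\neq0$, and then $X\in\TT_{\NN}^{\le0}$ gives $n\ge0$. This is the missing ingredient your sketch alludes to; once you have it, your argument (and the paper's) concludes immediately via the closure properties of $\TT_{\MM}^{\le0}$.
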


Before proving these Propositions, we prove Theorem \ref{partial order} by using them.

(i) We shall show that $\MM\ge\NN$ and $\NN\ge\LL$ imply $\MM\ge\LL$.
By Proposition \ref{another condition}, we have $\TT_{\MM}^{\le0}\supset\TT_{\NN}^{\le0}\supset\TT_{\LL}^{\le0}$.
Thus $\TT_{\MM}^{\le0}\supset\TT_{\LL}^{\le0}$, and we have $\MM\ge\LL$ again by Proposition \ref{another condition}.

(ii) We shall show that $\MM\ge\NN$ and $\NN\ge\MM$ imply $\MM=\NN$.
By Proposition \ref{another condition}, we have $\TT_{\MM}^{\le0}=\TT_{\NN}^{\le0}$.
Thus we have ${}^\perp(\TT_{\MM}^{<0})={}^\perp(\TT_{\NN}^{<0})$.
By Proposition \ref{recover}, we have $\MM=\NN$.
\qed

\medskip
To prove Propositions \ref{recover} and \ref{another condition}, we need the following result.

\begin{lemma}\label{generate}
\begin{itemize}
\item[(a)] For any subcategory $\MM$ of $\TT$, we have
\[\thick\MM=\bigcup_{\ell>0,\ n_1,\cdots,n_\ell\in\Z}\summand
(\MM [n_1]*\cdots*\MM [n_{\ell}]).\]
\item[(b)] Moreover if $\Hom_{\TT}(\MM,\MM[>0])=0$, then we have
\[\thick\MM=\bigcup_{\ell\ge0}
\summand(\MM[-\ell]*\MM[1-\ell]*\cdots*\MM[\ell-1]*\MM[\ell]).\]
\end{itemize}
\end{lemma}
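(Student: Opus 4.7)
The plan is to prove both containments in (a) directly, then derive (b) from (a) by $*$-product rearrangements enabled by the $\Hom$-vanishing assumption.

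For (a), denote the right-hand side by $\UU$. The containment $\UU\subset\thick\MM$ is immediate since $\thick\MM$ is closed under shifts, summands, and extensions, and contains $\MM$. For the reverse, I would show that $\UU$ is itself a thick subcategory containing $\MM$. Closure of $\UU$ under shifts and summands is clear from the definition. The substantive step is closure under cones (which by rotation and shift-closure also gives closure under the other two sides of a triangle): given a triangle $X\to W\to Y\to X[1]$ with $X\oplus X_0=X'\in\MM[n_1]*\cdots*\MM[n_p]$ and $Y\oplus Y_0=Y'\in\MM[m_1]*\cdots*\MM[m_q]$, taking the direct sum of this triangle with the split triangles on $X_0$ and $Y_0$ yields a triangle $X'\to W\oplus X_0\oplus Y_0\to Y'\to X'[1]$. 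Associativity of $*$ (a standard octahedral-axiom consequence) places its middle term in $\MM[n_1]*\cdots*\MM[n_p]*\MM[m_1]*\cdots*\MM[m_q]$, exhibiting $W$ as a summand of an object in $\UU$. Since $\UU$ contains $\MM$ (take $\ell=1$, $n_1=0$), it contains $\thick\MM$.

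For (b), I would combine (a) with three $*$-manipulations, each an application of the splitting criterion for a triangle $A\to Z\to B\to A[1]$ whose connecting map lies in a vanishing $\Hom$-group. The central swapping lemma is: if $a>b$, then $\MM[a]*\MM[b]\subset\MM[b]*\MM[a]$, because the connecting map lies in $\Hom_\TT(\MM,\MM[a-b+1])=0$ (as $a-b+1>0$), so $Z\simeq A\oplus B$ sits in $\MM[b]*\MM[a]$ via a split triangle. Applying this to adjacent factors inside a general $*$-product via associativity bubble-sorts any sequence of shifts into non-decreasing order. The same argument with $a=b$ gives $\MM[a]*\MM[a]=\MM[a]$, so repeated shifts can be merged. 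Finally, since $0\in\MM[j]$ for every $j$, one has $\MM[i]*\MM[k]\subset\MM[i]*\MM[j]*\MM[k]$ by inserting a split triangle with zero outer term, so after sorting and merging I can pad to the symmetric form $\MM[-\ell]*\cdots*\MM[\ell]$ with $\ell$ at least the maximum absolute value of the surviving shifts.

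The main obstacle is the extension-closure step in (a), where the summand data must be tracked carefully through direct sums of triangles; once this is settled, the reordering, merging, and padding in (b) are direct consequences of the splitting criterion under $\Hom_\TT(\MM,\MM[>0])=0$.
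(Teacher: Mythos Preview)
Your proposal is correct and follows essentially the same approach as the paper. The paper's proof is much terser---for (a) it simply asserts the right-hand side ``clearly forms a thick subcategory,'' and for (b) it records only the swap $\MM[n]*\MM[m]\subset\MM[m]*\MM[n]$ for $n\ge m$ (which, as you note, also yields merging at $n=m$) and declares the reduction to (a)---whereas you spell out the extension-closure, sorting, merging, and padding steps explicitly.
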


\begin{proof}
(a) Clearly the right hand side is contained in $\thick\MM$.
Since the right hand side clearly forms a thick subcategory of $\TT$, it contains $\thick\MM$.

(b) If $n\ge m$, then any triangle
\[M[n]\to X\to M'[m]\xrightarrow{f}M[n+1]\]
with $M,M'\in\MM$ satisfies $f=0$. This implies $X\simeq M[n]\oplus M'[m]$, so we have
$\MM[n]*\MM[m]\subset\MM[m]*\MM[n]$.
Thus we have reduced to the assertion from (a).
\end{proof}

The following observation is clear since we only consider subcategories $\MM$ satisfying $\MM=\add\MM$ by our notations.

\begin{lemma}\label{reduction}
Let $\MM$ and $\NN$ be subcategories of $\TT$ and $X\in\summand(\MM*\NN)$.
\begin{itemize}
\item[(a)] If $\Hom_{\TT}(\MM,X)=0$, then $X\in\NN$.
\item[(b)] If $\Hom_{\TT}(X,\NN)=0$, then $X\in\MM$.
\end{itemize}
\end{lemma}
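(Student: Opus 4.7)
The plan is to unpack the membership $X\in\summand(\MM*\NN)$ as the existence of a triangle $M\xrightarrow{f}Z\xrightarrow{g}N\to M[1]$ with $M\in\MM$, $N\in\NN$, together with a section-retraction pair $i\colon X\to Z$, $p\colon Z\to X$ satisfying $pi=\id_X$. The strategy for both parts is the same: use the vanishing hypothesis to kill one of the two compositions $pf$ or $gi$, then use the triangle (equivalently, the cohomological long exact sequence) to factor either $p$ through $g$ or $i$ through $f$, producing a splitting that realises $X$ as a direct summand of $N$ in case (a) or of $M$ in case (b). The convention $\MM=\add\MM$, $\NN=\add\NN$ from the Notations then upgrades ``summand of an object of $\NN$ (resp.\ $\MM$)'' to ``object of $\NN$ (resp.\ $\MM$)''.

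For part (a), the hypothesis $\Hom_{\TT}(\MM,X)=0$ applied to $p\circ f\colon M\to X$ gives $pf=0$. Hence $p$ vanishes on the image of $f$, so by the defining triangle there exists $\phi\colon N\to X$ with $p=\phi g$. Composing with the section $i$ yields
\[\phi\circ(gi)=p\circ i=\id_X,\]
so $gi\colon X\to N$ is a split monomorphism with retraction $\phi$. Therefore $X$ is a direct summand of $N\in\NN$, and $\NN=\add\NN$ gives $X\in\NN$.

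Part (b) is entirely dual: the hypothesis $\Hom_{\TT}(X,\NN)=0$ forces $gi=0$, so the defining triangle produces $\psi\colon X\to M$ with $i=f\psi$. Then
\[(pf)\circ\psi=p\circ i=\id_X,\]
so $\psi$ is a split monomorphism, $X$ is a summand of $M\in\MM$, and $\MM=\add\MM$ gives $X\in\MM$.

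I do not expect any serious obstacle here: the argument is a direct application of the triangle axioms, essentially one of the standard ``Wakamatsu-type'' splitting lemmas. The only subtle point worth isolating is the use of the standing convention $\MM=\add\MM$ and $\NN=\add\NN$, without which the conclusion would only be the weaker $X\in\summand\NN$ or $X\in\summand\MM$; this is why the lemma is formulated for subcategories in the sense fixed in the Notations rather than for arbitrary collections.
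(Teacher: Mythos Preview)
Your proof is correct. The paper does not spell out an argument at all: it simply records the observation as ``clear since we only consider subcategories $\MM$ satisfying $\MM=\add\MM$ by our notations'', and your write-up is exactly the routine triangle-splitting verification that underlies this remark, including the explicit use of the convention $\NN=\add\NN$ (resp.\ $\MM=\add\MM$) in the final step.
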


Now we have the following description of $\TT_{\MM}^{\le0}$.

\begin{proposition}\label{generate2}
For any $\MM\in\silt\TT$, we have
\begin{eqnarray*}
\TT&=&\bigcup_{\ell\ge0}\summand(\MM[-\ell]*\MM[1-\ell]*\cdots*\MM[\ell-1]*\MM[\ell]),\\
\TT_{\MM}^{\le0}&=&\bigcup_{\ell\ge0}\summand(\MM*\MM[1]*\cdots*\MM[\ell]).
\end{eqnarray*}
\end{proposition}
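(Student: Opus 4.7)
The first equality is a direct consequence of Lemma \ref{generate}(b): since $\MM$ is silting we have $\TT=\thick\MM$ and $\Hom_\TT(\MM,\MM[>0])=0$, so the conclusion of Lemma \ref{generate}(b) literally is the first formula. So all the work goes into the second equality.

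For the inclusion $\supset$ in the second equality, my plan is to prove by induction on $\ell\ge 0$ that every object $Y\in\MM*\MM[1]*\cdots*\MM[\ell]$ lies in $\TT_\MM^{\le 0}$. For $\ell=0$ we have $Y\in\MM\subset\TT_\MM^{\le 0}$ by the silting assumption. For the inductive step, using associativity of $*$, write $Y$ as the middle term of a triangle $Y_0\to Y\to Y'\to Y_0[1]$ with $Y_0\in\MM$ and $Y'\in\MM[1]*\cdots*\MM[\ell]=(\MM*\cdots*\MM[\ell-1])[1]$. For any $i>0$, applying $\Hom_\TT(M,-[i])$ with $M\in\MM$ gives a long exact sequence sandwiching $\Hom_\TT(M,Y[i])$ between $\Hom_\TT(M,Y_0[i])=0$ (silting condition) and $\Hom_\TT(M,Y'[i])=0$ (induction, after shifting). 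Hence $Y\in\TT_\MM^{\le 0}$. Finally, $\TT_\MM^{\le 0}$ is closed under summands because it is cut out by vanishing of $\Hom$-groups, so $\summand(\MM*\cdots*\MM[\ell])\subset\TT_\MM^{\le 0}$.

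For the inclusion $\subset$, my plan is the key reduction step. Take $X\in\TT_\MM^{\le 0}$. By the first equality, there exists $\ell\ge 0$ with
\[X\in\summand\bigl(\MM[-\ell]*\MM[1-\ell]*\cdots*\MM[\ell-1]*\MM[\ell]\bigr).\]
Using associativity of $*$, group this as $\MM'*\NN'$ with $\MM':=\MM[-\ell]*\cdots*\MM[-1]$ and $\NN':=\MM*\MM[1]*\cdots*\MM[\ell]$. I will show $\Hom_\TT(\MM',X)=0$ by induction on $\ell$: any object of $\MM[-j]$ for $j=1,\dots,\ell$ satisfies $\Hom_\TT(\MM[-j],X)=\Hom_\TT(\MM,X[j])=0$ by the definition of $\TT_\MM^{\le 0}$, and then long exact sequences for the defining triangles of $\MM'$ propagate the vanishing upward (this is dual to the argument of the previous paragraph). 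Once $\Hom_\TT(\MM',X)=0$ is established, Lemma \ref{reduction}(a) (applied to $\MM'$, $\NN'$ in place of $\MM$, $\NN$) yields $X\in\NN'\subset\summand\NN'$, as required.

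The only slightly delicate point is the bookkeeping around associativity of $*$ (so that the grouping $\MM[-\ell]*\cdots*\MM[\ell]=\MM'*\NN'$ is legitimate) and the inductive propagation of $\Hom$-vanishing across iterated extensions; both are routine once one writes down the triangles in order, so I would not expect genuine difficulty — the structural content of the proof is the combination of Lemma \ref{generate}(b) (which produces the two-sided filtration) with Lemma \ref{reduction}(a) (which cuts off the negative part using the silting condition and membership in $\TT_\MM^{\le 0}$).
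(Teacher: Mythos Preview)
Your proof is correct and follows essentially the same strategy as the paper: both use Lemma \ref{generate}(b) for the first equality, and for the second both start from the two-sided filtration and invoke Lemma \ref{reduction}(a) to strip away the negatively-shifted part using $\Hom_\TT(\MM[-j],X)=0$ for $j\ge1$. The only organizational difference is that the paper phrases the $\subset$ direction as a minimality argument (take the smallest $n$ with $X\in\summand(\MM[n]*\cdots*\MM[\ell])$; if $\Hom_\TT(\MM[n],X)=0$ then Lemma \ref{reduction}(a) contradicts minimality, so $n\ge0$), which peels off one shift at a time and thereby avoids your separate induction showing $\Hom_\TT(\MM',X)=0$ for the whole block $\MM'=\MM[-\ell]*\cdots*\MM[-1]$. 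Your version is a bit more explicit (you also spell out the $\supset$ inclusion, which the paper leaves implicit); the paper's version is slightly slicker. Either way the content is the same.

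One small bookkeeping remark: when you apply Lemma \ref{reduction}(a) with $\NN'=\MM*\cdots*\MM[\ell]$, note that the lemma is stated for subcategories in the paper's sense (i.e.\ closed under $\add$), so strictly speaking you conclude $X\in\add\NN'$; since $\NN'$ is already closed under finite coproducts this equals $\summand\NN'$, which is what you want. The paper makes the same tacit identification.
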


\begin{proof}
The first equation is clear from Lemma \ref{generate}(b).

Fix any $X\in\TT_{\MM}^{\le0}$. We can take the smallest integer $n$ such that
\[X\in\summand(\MM[n]*\MM[n+1]\cdots*\MM[\ell])\]
for some $\ell\ge n$. By Lemma \ref{reduction}, the minimality of $n$ implies
$\Hom_{\TT}(\MM[n],X)\neq0$.
Since $X\in\TT_{\MM}^{\le0}$, we have $n\ge0$.
Thus we have $X\in\summand(\MM*\MM[1]*\cdots*\MM[\ell])$.
\end{proof}

Now we have the following important property of silting subcategories.

\begin{theorem}\label{no inclusion}
If $\MM,\NN\in\silt\TT$ satisfy $\MM\subset\NN$, then $\MM=\NN$.
\end{theorem}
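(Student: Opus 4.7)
The plan is to use Proposition \ref{generate2}, which describes $\TT_{\MM}^{\le 0}$ explicitly in terms of extensions of shifts of $\MM$. So I first want to reduce the problem to showing that every $N \in \NN$ lies in $\MM$. Since $\MM \subset \NN$ and $\NN$ is silting, we have $\Hom_{\TT}(\MM, N[i]) = 0$ for every $i > 0$, so $N \in \TT_{\MM}^{\le 0}$. Applying Proposition \ref{generate2} then yields some $\ell \ge 0$ for which
\[ N \in \summand(\MM * \MM[1] * \cdots * \MM[\ell]). \]

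Next I would dispose of the trivial case $\ell = 0$, where $N \in \summand(\MM) = \MM$ since $\MM = \add\MM$ by our convention on subcategories. For $\ell \ge 1$, the plan is to apply Lemma \ref{reduction}(b) with the first factor $\MM$ and the second factor $\NN' := \MM[1] * \cdots * \MM[\ell]$; to do so I need $\Hom_{\TT}(N, \NN') = 0$. The inclusion $\MM \subset \NN$ together with the silting condition for $\NN$ gives the base vanishing $\Hom_{\TT}(N, \MM[i]) = 0$ for every $i \ge 1$. From this I propagate by induction on $b - a$ the statement
\[ \Hom_{\TT}(N, \MM[a] * \MM[a+1] * \cdots * \MM[b]) = 0 \qquad (1 \le a \le b), \]
using the associativity $\MM[a] * \cdots * \MM[b] = \MM[a] * (\MM[a+1] * \cdots * \MM[b])$ and the long exact sequence of $\Hom_{\TT}(N, -)$ applied to the defining triangle of each object. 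Taking $a = 1$, $b = \ell$ supplies the hypothesis for Lemma \ref{reduction}(b), giving $N \in \MM$ and hence $\NN \subset \MM$.

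I expect the only mildly delicate point to be the bookkeeping in the inductive vanishing statement, since naive induction on $\ell$ alone does not preserve the shape of the expression under the shift $[-1]$; that is precisely why I prefer the induction on the \emph{width} $b - a$ of the interval, which is stable under peeling off the first factor. Everything else is a direct appeal to Proposition \ref{generate2} and Lemma \ref{reduction}(b).

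As a sanity check, one can also observe a slicker (but logically posterior) derivation: the hypothesis $\MM \subset \NN$ together with the silting condition for $\NN$ immediately yields both $\MM \ge \NN$ and $\NN \ge \MM$ in the preorder of Definition \ref{partial order}, so antisymmetry in Theorem \ref{partial order} would force $\MM = \NN$. However, since Theorem \ref{partial order} ultimately rests on Propositions \ref{recover} and \ref{another condition} whose proofs are to be given after Theorem \ref{no inclusion}, I would stick with the direct argument above to keep the logical order clean.
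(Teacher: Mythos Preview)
Your proof is correct and follows essentially the same route as the paper's: both use Proposition \ref{generate2} to place $N\in\NN$ in $\summand(\MM*\MM[1]*\cdots*\MM[\ell])$, then invoke Lemma \ref{reduction}(b) together with the vanishing $\Hom_{\TT}(N,\MM[1]*\cdots*\MM[\ell])=0$ (which the paper states without proof, while you spell out the easy induction). Your remark about avoiding the circularity with Theorem \ref{partial order} is also on point.
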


\begin{proof}
Fix $X\in\NN$. Since $\NN$ is silting, we have $X\in\TT_{\MM}^{\le0}$.
By Proposition \ref{generate2}, we have
\[X\in\summand(\MM*\MM[1]*\cdots*\MM[\ell])\]
for some $\ell\ge0$.
Since $\Hom_{\TT}(X,\MM[1]*\cdots*\MM[\ell])=0$, we have $X\in\MM$
by Lemma \ref{reduction}.
\end{proof}

Now we give a proof of Proposition \ref{recover}.

Put $\NN:=\TT_{\MM}^{\le0}\cap{}^\perp(\TT_{\MM}^{<0})$.
We have $\MM\subset\TT_{\MM}^{\le0}$.
Since we have $\Hom_{\TT}(\MM,\TT_{\MM}^{<0})=0$, we have
$\MM\subset{}^\perp(\TT_{\MM}^{<0})$.
Thus we have $\MM\subset\NN$.

On the other hand, we have
\[\Hom_{\TT}(\NN,\NN[>0])\subset\Hom_{\TT}({}^\perp(\TT_{\MM}^{<0}),\TT_{\MM}^{\le0}[>0])=0.\]
Consequently, $\NN$ is also a silting subcategory of $\TT$.
By Theorem \ref{no inclusion}, we have $\MM=\NN$.
\qed

\medskip
Now we give a proof of Proposition \ref{another condition}.

First we assume $\TT_{\MM}^{\le0}\supset\TT_{\NN}^{\le0}$. Then we have $\NN\subset\TT_{\MM}^{\le0}$,
which implies $\Hom_{\TT}(\MM,\NN[>0])=0$.

Conversely we assume $\MM\ge\NN$. Then we have $\NN[\ge0]\subset\TT_{\MM}^{\le0}$.
By Proposition \ref{generate2}, we have
\[\TT_{\NN}^{\le0}=\bigcup_{\ell\ge0}\summand(\NN*\NN[1]*\cdots*\NN[\ell])
\subset\TT_{\MM}^{\le0}.\]
Thus we have completed the proof.
\qed

\medskip
Now we give the following property of the partial order.

\begin{proposition}\label{common summand}
If $\MM,\NN,\LL\in\silt\TT$ satisfy
$\MM\ge\LL\ge\NN$, then $\MM\cap\NN\subset\LL$.
\end{proposition}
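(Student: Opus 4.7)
My plan is to invoke Proposition~\ref{recover} to rewrite the membership $X \in \LL$ as the conjunction of two separate conditions, and then verify each condition using exactly one of the two inequalities in the hypothesis. Concretely, Proposition~\ref{recover} gives $\LL = \TT_{\LL}^{\le0} \cap {}^\perp(\TT_{\LL}^{<0})$, so for an arbitrary $X \in \MM \cap \NN$ it suffices to establish (i) $X \in \TT_{\LL}^{\le0}$ and (ii) $X \in {}^\perp(\TT_{\LL}^{<0})$.

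For (i) I would use only the upper bound $\LL \ge \NN$. By the definition of the partial order this means $\Hom_{\TT}(\LL, \NN[>0]) = 0$, and since $X \in \NN$ we immediately read off $\Hom_{\TT}(\LL, X[>0]) = 0$, which is precisely $X \in \TT_{\LL}^{\le0}$.

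For (ii) I would use the lower bound $\MM \ge \LL$ together with $X \in \MM$. Applying Proposition~\ref{recover} to $\MM$ itself, the inclusion $X \in \MM$ already gives $X \in {}^\perp(\TT_{\MM}^{<0})$, so it remains to upgrade this to the perp of $\TT_{\LL}^{<0}$. For this I would translate $\MM \ge \LL$ via Proposition~\ref{another condition} into the inclusion $\TT_{\LL}^{\le0} \subset \TT_{\MM}^{\le0}$, then shift by $[1]$ to obtain $\TT_{\LL}^{<0} \subset \TT_{\MM}^{<0}$, and finally take left orthogonals to reverse the inclusion. This yields $X \in {}^\perp(\TT_{\MM}^{<0}) \subset {}^\perp(\TT_{\LL}^{<0})$, as required, and Proposition~\ref{recover} for $\LL$ then delivers $X \in \LL$.

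No substantial obstacle is expected here: the argument is essentially a bookkeeping exercise, and the only structural point that needs to be noticed is that the two sides of the sandwich $\MM \ge \LL \ge \NN$ feed the two halves of the characterization of $\LL$ in Proposition~\ref{recover} from opposite sides (the upper bound handles the ``$\le0$'' half, the lower bound handles the orthogonality half). The only even mildly delicate step is that shifting a $\TT_{\cdot}^{\le0}$-inclusion by $[1]$ produces the corresponding $\TT_{\cdot}^{<0}$-inclusion, which is immediate from the definition $\TT_{\cdot}^{<0} = \TT_{\cdot}^{\le0}[1]$.
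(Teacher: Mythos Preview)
Your proof is correct and follows essentially the same approach as the paper: both arguments use Proposition~\ref{recover} to identify $\LL$ with $\TT_{\LL}^{\le0}\cap{}^\perp(\TT_{\LL}^{<0})$ and then feed the two inequalities $\MM\ge\LL$ and $\LL\ge\NN$ into the two halves of this description via Proposition~\ref{another condition}. The only cosmetic difference is that for the inclusion $X\in\TT_{\LL}^{\le0}$ you read off $\Hom_{\TT}(\LL,X[>0])=0$ directly from the definition of $\LL\ge\NN$, whereas the paper passes through $\NN\subset\TT_{\NN}^{\le0}\subset\TT_{\LL}^{\le0}$; these are of course equivalent.
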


\begin{proof}
By Proposition \ref{another condition}, we have $\TT_{\MM}^{\le0}\supset\TT_{\LL}^{\le0}\supset\TT_{\NN}^{\le0}$ and
${}^\perp(\TT_{\MM}^{<0})\subset{}^\perp(\TT_{\LL}^{<0})\subset{}^\perp(\TT_{\NN}^{<0})$.
By Proposition \ref{recover}, we have
\[\MM\cap\NN\subset{}^\perp(\TT_{\MM}^{\le0})\cap\TT_{\NN}^{\le0}
\subset{}^\perp(\TT_{\LL}^{\le0})\cap\TT_{\LL}^{\le0}=\LL.\]
\end{proof}

We end this subsection by the following observation.

\begin{proposition}\label{existence of additive generator}
If $\TT$ has a silting object, then any silting subcategory contains an additive generator.
\end{proposition}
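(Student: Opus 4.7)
The plan is to use Theorem \ref{no inclusion} as the endgame: if I can exhibit a single object $N$ in the given silting subcategory $\NN$ such that $\add N$ is itself silting, then the inclusion $\add N \subset \NN$ of silting subcategories will force $\add N = \NN$, exhibiting $N$ as the desired additive generator.

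Let $M$ be the given silting object of $\TT$. Since $\NN$ is silting we have $\TT = \thick\NN$, so Proposition \ref{generate2} applied to $\NN$ yields an integer $\ell \ge 0$ with
\[M \in \summand(\NN[-\ell] * \NN[1-\ell] * \cdots * \NN[\ell]).\]
Unwinding the iterated $*$, this says that $M$ is a direct summand of an object built from $2\ell+1$ iterated triangles whose successive factors are $N_{-\ell}[-\ell], N_{1-\ell}[1-\ell], \ldots, N_{\ell}[\ell]$ for some objects $N_{-\ell}, N_{1-\ell}, \ldots, N_{\ell} \in \NN$. Set
\[N := N_{-\ell} \oplus N_{1-\ell} \oplus \cdots \oplus N_{\ell} \in \NN.\]
Every $N_j$ lies in $\add N$, so the same iterated triangles place $M$ in $\thick(\add N)$, and combined with $\thick M = \TT$ this gives $\thick(\add N) = \TT$. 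Moreover $\Hom_{\TT}(\add N, \add N[>0]) = 0$ because $N \in \NN$ and $\NN$ is silting, so $\add N$ is itself a silting subcategory of $\TT$.

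Finally $\add N \subset \NN$ with both silting, so Theorem \ref{no inclusion} gives $\add N = \NN$. The only step that needs any care is the extraction of the finite object $N$ from the iterated-extension expression for $M$; this is routine given that each subcategory $\NN[j]$ is closed under finite coproducts by the convention $\NN = \add\NN$, together with the fact that $\thick$ absorbs summands. I do not anticipate a genuine obstacle.
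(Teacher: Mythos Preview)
Your proof is correct and follows essentially the same approach as the paper: express the silting object $M$ as a summand of an iterated extension of finitely many shifted objects from $\NN$, form their direct sum $N$, observe that $\add N$ is silting, and apply Theorem \ref{no inclusion} to conclude $\add N = \NN$. The only cosmetic difference is that the paper cites Lemma \ref{generate} directly rather than Proposition \ref{generate2}, but the argument is the same.
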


\begin{proof}
Let $M$ be a silting object and $\NN$ be a silting subcategory.
By Lemma \ref{generate}, there exists $N_1,\ldots,N_\ell\in\NN$ and $n_1,\cdots,n_\ell\in\Z$ such that $M\in\summand(N_1[n_1]*\cdots*N_\ell[n_\ell])$.
Let $\NN':=\add(N_1\oplus\cdots\oplus N_\ell)$.
Since $M\in\thick\NN'$, we have $\TT=\thick\NN'$.
Thus $\NN'$ is a silting subcategory of $\TT$.
By Theorem \ref{no inclusion}, we have $\NN=\NN'$.
Thus $\NN$ has an additive generator.
\end{proof}


\subsection{Krull-Schmidt triangulated categories}

Let $\TT$ be a triangulated category.
In this subsection we always assume that $\TT$ is \emph{Krull-Schmidt}
in the sense that any object in $\TT$ is isomorphic to a finite coproduct
of objects whose endomorphism rings are local.
In this case such a coproduct is uniquely determined up to isomorphism.
We denote by $J_{\TT}$ the \emph{Jacobson radical} of $\TT$ \cite{ARS,ASS}.
For a subcategory $\MM$ of $\TT$, we denote by $\ind\MM$ the set
of isoclasses of indecomposable objects in $\MM$.

We say that an object $M\in\TT$ is \emph{basic} if $M$ is isomorphic to
a coproduct of indecomposable objects which are mutually non-isomorphic.
Since $\TT$ is Krull-Schmidt, we have a one-to-one correspondence
between the isomorphism classes of basic objects $M$ and subcategories $\MM$ of $\TT$
containing additive generators. It is given by $M\mapsto\MM=\add M$.

\begin{proposition}
Assume that $\TT$ has a silting object.
Then we can regard $\silt\TT$ as the set of isomorphism classes of basic silting objects in $\TT$.
\end{proposition}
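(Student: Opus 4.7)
The plan is to combine Proposition \ref{existence of additive generator} with the Krull--Schmidt property and the bijection already noted in the text between basic objects (up to isomorphism) and subcategories containing an additive generator.

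First, I would observe that if $M$ is a basic silting object, then $\add M$ is, by definition, a silting subcategory of $\TT$, so the assignment $M\mapsto\add M$ takes isomorphism classes of basic silting objects to $\silt\TT$. Second, for surjectivity, I would take any $\NN\in\silt\TT$ and apply Proposition \ref{existence of additive generator}, which (using the hypothesis that $\TT$ has a silting object) guarantees that $\NN$ contains an additive generator $N$, i.e.\ $\NN=\add N$. Since $\TT$ is Krull--Schmidt, $N$ decomposes as a finite coproduct of indecomposables with local endomorphism rings, and taking one copy of each isomorphism class of summand yields a basic object $M$ with $\add M=\add N=\NN$. In particular $M$ is silting and maps to $\NN$.

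For injectivity, suppose two basic silting objects $M$ and $M'$ satisfy $\add M=\add M'$. Every indecomposable summand of $M$ then lies in $\add M'$, and by Krull--Schmidt uniqueness of direct sum decompositions it must be isomorphic to one of the indecomposable summands of $M'$; the symmetric argument plus basicness of both $M$ and $M'$ forces $M\simeq M'$. This is precisely the bijection between basic objects and subcategories with an additive generator that is mentioned immediately before the proposition, now restricted to the silting side.

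There is no real obstacle here: the whole proof is a one-line consequence of Proposition \ref{existence of additive generator} plus the standard Krull--Schmidt correspondence between basic objects and $\add$-closed subcategories generated by a single object. The only mild care needed is to invoke Proposition \ref{existence of additive generator} (which is where the assumption that $\TT$ possesses some silting object is actually used) before passing to a basic representative via Krull--Schmidt.
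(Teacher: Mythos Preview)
Your proof is correct and follows essentially the same approach as the paper: invoke Proposition \ref{existence of additive generator} to get an additive generator for every silting subcategory, then use the Krull--Schmidt bijection between basic objects and subcategories with an additive generator. The one point you omit is the paper's explicit observation that $\silt\TT$ is actually a \emph{set} (via Remark \ref{T is small}, since $\TT$ is skeletally small once it has a silting subcategory), which is part of what ``regard $\silt\TT$ as the set'' is asserting.
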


\begin{proof}
$\silt\TT$ is a set by Remark \ref{T is small}.
By Proposition \ref{existence of additive generator}, any silting subcategory of $\TT$ is an additive closure of a silting object.
Thus we have the assertion.
\end{proof}

Thanks to Krull-Schmidt assumption, we have the following useful property (e.g. \cite[2.1, 2.3]{IY}),
where (b) and (c) is famous as a `Wakamatsu's Lemma'.

\begin{lemma}\label{direct summand}
Let $\MM$ be a subcategory of $\TT$.
Then the following statements hold.
\begin{itemize}
\item[(a)] If a subcategory $\NN$ of $\TT$ satisfies $\Hom_{\TT}(\MM,\NN)=0$, 
then $\MM*\NN$ is closed under summands.
\item[(b)] If $\MM$ is contravariantly finite and $\MM*\MM\subset\MM$, 
then $(\MM, \MM^{\perp})$ is a torsion pair.
\item[(c)] If $\MM$ is covariantly finite and $\MM*\MM\subset\MM$, 
then $({}^{\perp}\MM, \MM)$ is a torsion pair. 
\end{itemize}
\end{lemma}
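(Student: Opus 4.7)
The plan is to prove each part by standard Krull--Schmidt / approximation arguments; altogether these are classical and can ultimately be found in \cite[Prop.\ 2.1, 2.3]{IY}, but let me sketch how I would run the arguments directly.

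For part (a), let $W := X \oplus Y \in \MM*\NN$ sit in a triangle $M \xrightarrow{f} W \xrightarrow{g} N \xrightarrow{h} M[1]$ with $M \in \MM$, $N \in \NN$, and let $e \in \End_{\TT}(W)$ be the idempotent projecting $W$ onto $X$. The first step is to lift $e$ to a self-morphism of this triangle: since $\Hom_{\TT}(M,N)=0$, the composite $g \circ e \circ f$ vanishes, so $e \circ f$ factors through $f$ as $f \circ \alpha$ for some $\alpha \in \End_{\TT}(M)$, and TR3 produces a $\beta \in \End_{\TT}(N)$ completing a morphism of triangles $(\alpha,e,\beta)$. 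The main obstacle, and what I expect to occupy most of the work, is that $\alpha$ and $\beta$ need not be idempotent; however, $e^2 = e$ forces $f \circ (\alpha^2 - \alpha) = 0$ and $(\beta^2-\beta)\circ g = 0$, which places these differences in controlled ideals of $\End_{\TT}(M)$ and $\End_{\TT}(N)$. Because $\TT$ is Krull--Schmidt, these endomorphism rings are semiperfect, so idempotents lift modulo the Jacobson radical; after replacing $\alpha$ and $\beta$ by honest idempotents, the induced splittings $M = M_0 \oplus M_1$, $N = N_0 \oplus N_1$ give, via the same morphism of triangles, a distinguished triangle $M_0 \to X \to N_0 \to M_0[1]$, which witnesses $X \in \MM * \NN$.

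For part (b), fix $T \in \TT$ and take a right minimal right $\MM$-approximation $f \colon M \to T$ (which exists by Krull--Schmidt, by peeling off summands on which $f$ is an isomorphism); complete it to a triangle $M \xrightarrow{f} T \xrightarrow{g} C \xrightarrow{h} M[1]$. It suffices to prove $C \in \MM^{\perp}$. Given $\phi \colon M' \to C$ with $M' \in \MM$, I would form the homotopy pullback of $\phi$ along $g$, yielding a triangle $M \xrightarrow{\iota} Z \to M' \xrightarrow{h\circ \phi} M[1]$ together with a map $\pi \colon Z \to T$ making the pullback square commute. The hypothesis $\MM * \MM \subset \MM$ gives $Z \in \MM$, so $\pi$ factors through the approximation $f$ as $\pi = f \circ \xi$ for some $\xi \colon Z \to M$. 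The endomorphism $\alpha := \xi \circ \iota \colon M \to M$ then satisfies $f \circ \alpha = \pi \circ \iota = f$, and right minimality of $f$ forces $\alpha$ to be an isomorphism. Consequently $\iota$ is a split monomorphism, the triangle splits, and the connecting map $h \circ \phi$ vanishes. Therefore $\phi$ factors through $g$ as $\phi = g \circ \psi$, but $\psi \colon M' \to T$ in turn factors through the approximation as $\psi = f \circ \psi'$, so $\phi = g \circ f \circ \psi' = 0$. This gives $\Hom_{\TT}(\MM,C) = 0$, and since $\Hom_{\TT}(\MM,\MM^{\perp}) = 0$ is automatic, the triangle $M \to T \to C \to M[1]$ exhibits the torsion pair $(\MM,\MM^{\perp})$.

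Part (c) is formally dual to part (b): take a left minimal left $\MM$-approximation $T \to N$, complete to a triangle $K \to T \to N \to K[1]$, and apply the same Wakamatsu-style argument in $\TT^{\rm op}$ to deduce $K \in {}^{\perp}\MM$. The main conceptual obstacle across all three parts is really (a), where the subtle point is that the lifted $\alpha, \beta$ must be upgraded to genuine idempotents using semiperfectness of the endomorphism rings; once (a) is in hand, parts (b) and (c) are direct applications of right/left minimal approximations combined with the extension-closure hypothesis.
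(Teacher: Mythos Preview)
The paper itself gives no proof of this lemma: it is stated as a known fact with a citation to \cite[2.1,\,2.3]{IY}. Your proposal likewise cites \cite{IY} and then goes further by sketching the arguments. Your treatment of (b) is correct and complete (a clean Wakamatsu-type argument via a minimal right $\MM$-approximation and a homotopy pullback), and (c) is indeed the formal dual. In this respect you do more than the paper does.

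Your sketch for (a), however, has a genuine gap at exactly the point you flag as ``the subtle point''. From $e^2=e$ you correctly deduce $f(\alpha^2-\alpha)=0$, but this only says $\alpha^2-\alpha$ lies in the \emph{left} ideal $\ker(f\circ-)\subset\End_{\TT}(M)$, which is not a priori contained in the Jacobson radical; so ``idempotents lift modulo the Jacobson radical'' does not apply directly. One can arrange $\ker(f\circ-)\subset J(\End_{\TT}(M))$ by first replacing the triangle by one in which $f$ is right minimal (indeed, $f\gamma=0$ then gives $f(1-\gamma\delta)=f$ for all $\delta$, forcing $1-\gamma\delta$ invertible and hence $\gamma\in J$). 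But even then, lifting $\bar\alpha$ to an idempotent $\alpha_0$ with $\alpha_0-\alpha\in J$ does \emph{not} guarantee $\alpha_0-\alpha\in\ker(f\circ-)$, which is precisely what you need for $(\alpha_0,e,\beta_0)$ to remain a morphism of triangles and hence for the triangle to split as you claim. In general semiperfect rings the lifted idempotent cannot be taken to be a polynomial in $\alpha$, so one cannot simply invoke that $f\alpha^n=ef$ for all $n$. The actual argument (as in \cite{IY}) requires more care at this step; your outline identifies the right obstacle but the proposed resolution does not close it.
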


Now we have the following equalities.

\begin{proposition}\label{generate3}
\begin{itemize}
\item[(a)] For any $\MM\in\silt\TT$, we have
\begin{eqnarray*}
\TT&=&\bigcup_{\ell\ge0}\MM[-\ell]*\MM[1-\ell]*\cdots*\MM[\ell-1]*\MM[\ell],\\ 
\TT_{\MM}^{\le0}&=&\bigcup_{\ell\ge0}\MM*\MM[1]*\cdots*\MM[\ell],\\ 
{}^{\perp}(\TT^{\leq 0}_{\MM})&=&\bigcup_{\ell>0}\MM[-\ell]*\MM[-\ell+1]*\cdots*\MM[-1].
\end{eqnarray*}
\item[(b)] $({}^{\perp}(\TT^{\leq 0}_{\MM}), \TT^{\leq 0}_{\MM})$ is a torsion pair, and so $({}^{\perp}(\TT^{<0}_{\MM}), \TT^{\leq 0}_{\MM})$ is a co-t-structure with the coheart $\MM$.
\end{itemize}
\end{proposition}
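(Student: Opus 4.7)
The plan is to reduce everything to Proposition \ref{generate2} by upgrading its $\summand$-statements to genuine $*$-products, using the Krull-Schmidt property through Lemma \ref{direct summand}(a). First I would establish that whenever $n_1<n_2<\cdots<n_k$, the subcategory $\MM[n_1]*\MM[n_2]*\cdots*\MM[n_k]$ is already closed under summands. An induction on $k$ works: the silting hypothesis gives $\Hom_\TT(\MM[n_i],\MM[n_j])=0$ for all $i<j$, iterated long exact sequences then yield $\Hom_\TT(\MM[n_1],\MM[n_2]*\cdots*\MM[n_k])=0$, and Lemma \ref{direct summand}(a) closes the inductive step. The first two formulas in (a) then follow at once by dropping $\summand$ from Proposition \ref{generate2}.

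For the third equality, the inclusion $\supset$ will be a pure vanishing computation: each $X\in\MM[-\ell]*\cdots*\MM[-1]$ kills every $Y$ in the already-proved expression $\bigcup_{m\ge 0}\MM*\MM[1]*\cdots*\MM[m]$ for $\TT_\MM^{\le0}$, because $\Hom_\TT(\MM[-i],\MM[j])=0$ for $i>0$ and $j\ge0$, and this propagates through the $*$-products via long exact sequences. For the inclusion $\subset$, I would take $X\in{}^\perp(\TT_\MM^{\le0})$ and use the first formula to place it in some $\MM[-\ell]*\cdots*\MM[\ell]$; associativity of $*$ then gives a triangle
\[A\to X\to B\to A[1]\]
with $A\in\MM[-\ell]*\cdots*\MM[-1]$ and $B\in\MM*\MM[1]*\cdots*\MM[\ell]\subset\TT_\MM^{\le0}$. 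Since $\Hom_\TT(X,B)=0$, the map $X\to B$ vanishes, the triangle splits, and $X$ becomes a direct summand of $A$; the summand-closure from the first paragraph then places $X$ inside $\MM[-\ell]*\cdots*\MM[-1]$.

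Part (b) will be bookkeeping. Orthogonality $\Hom_\TT({}^\perp(\TT_\MM^{\le0}),\TT_\MM^{\le0})=0$ is tautological, and $\TT={}^\perp(\TT_\MM^{\le0})*\TT_\MM^{\le0}$ is obtained by splitting the filtration $\MM[-\ell]*\cdots*\MM[\ell]$ of the first formula in (a) at the degree-zero cut, which produces the torsion pair. For the co-t-structure statement I would use the shift identity ${}^\perp(\TT_\MM^{\le0})={}^\perp(\TT_\MM^{<0})[-1]$, so that the torsion pair rewrites as $({}^\perp(\TT_\MM^{<0})[-1],\TT_\MM^{\le0})$, and observe that ${}^\perp(\TT_\MM^{<0})\subset{}^\perp(\TT_\MM^{<0})[1]$ is equivalent to $\TT_\MM^{<0}[1]\subset\TT_\MM^{<0}$, which is immediate. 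The coheart is ${}^\perp(\TT_\MM^{<0})\cap\TT_\MM^{\le0}$, which equals $\MM$ by Proposition \ref{recover}.

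The one genuinely delicate point I foresee is the summand-closure of $\MM[n_1]*\cdots*\MM[n_k]$ in the first paragraph, since Proposition \ref{generate2} stops short of it and the statement would fail without Krull-Schmidt. This is exactly where the Krull-Schmidt assumption enters via Lemma \ref{direct summand}(a); once it is in hand, the remainder is formal manipulation of the unions in (a) together with a single use of Proposition \ref{recover}.
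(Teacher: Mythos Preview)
Your proposal is correct and follows essentially the same path as the paper. The only difference is a minor reordering: the paper first observes that $(\bigcup_{\ell>0}\MM[-\ell]*\cdots*\MM[-1],\TT_\MM^{\le0})$ is a torsion pair (using the first two equalities for orthogonality and decomposition) and then reads off the third equality from the general fact that the left half of a torsion pair equals the left orthogonal of the right half, whereas you prove the inclusion $\subset$ of the third equality directly by the splitting argument and assemble the torsion pair afterwards. Your splitting argument is exactly an unpacking of that general fact, so the two arguments are equivalent; everything else (the use of Proposition~\ref{generate2} with Lemma~\ref{direct summand}(a) for the first two equalities, and Proposition~\ref{recover} for the coheart) matches the paper.
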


\begin{proof}
The first two equalities follow from Proposition \ref{generate2} and Lemma \ref{direct summand}(a).
By the second equality we have 
\[{}^{\perp}(\TT^{\leq 0}_{\MM})\supset\bigcup_{\ell>0}\MM[-\ell]*\MM[-\ell+1]*\cdots*\MM[-1].\]
Together with the first equality we have
\[(\bigcup_{\ell>0}\MM[-\ell]*\cdots*\MM[-1])*\TT^{\leq 0}_{\MM}=(\bigcup_{\ell>0}\MM[-\ell]*\cdots*\MM[-1])*(\bigcup_{\ell>0}\MM*\cdots\MM[\ell])=\TT.\]
Thus $(\bigcup_{\ell>0}\MM[-\ell]*\cdots*\MM[-1], \TT^{\leq 0}_{\MM})$ is a torsion pair, and hence we have
\[{}^{\perp}(\TT^{\leq 0}_{\MM})\subset\bigcup_{\ell>0}\MM[-\ell]*\MM[-\ell+1]*\cdots*\MM[-1].\]
Thus we get the third equality, and $({}^{\perp}(\TT^{\leq 0}_{\MM}), \TT^{\leq 0}_{\MM})$ is a torsion pair.
The coheart of the co-t-structure $({}^{\perp}(\TT^{<0}_{\MM}), \TT^{\leq 0}_{\MM})$ is $\MM$ by Proposition \ref{recover}.
\end{proof}

\begin{proposition}\label{resolution}
Let $\MM\in\silt\TT$. For any $N=N_0\in\TT_{\MM}^{\le0}$, we have triangles
\[\xymatrix@R=0.2cm{
N_1\ar[r]^{g_1}&M_0\ar[r]^{f_0}&N_0\ar[r]&N_1[1],\\
&\cdots,\\
N_\ell\ar[r]^{g_\ell}&M_{\ell-1}\ar[r]^{f_{\ell-1}}&N_{\ell-1}\ar[r]&N_\ell[1],\\
0\ar[r]^{g_{\ell+1}}&M_\ell\ar[r]^{f_\ell}&N_\ell\ar[r]&0,
}\]
for some $\ell\ge0$ such that $f_i$ is a minimal right $\MM$-approximation and
$g_{i+1}$ belongs to $J_{\TT}$ for any $0\le i\le \ell$.
\end{proposition}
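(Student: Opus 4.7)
The plan is to induct on the smallest integer $\ell\ge 0$ such that $N_0\in \MM*\MM[1]*\cdots*\MM[\ell]$; the existence of such $\ell$ is guaranteed by Proposition \ref{generate3}(a). In the base case $\ell=0$ we have $N_0\in\MM$, and I take $M_0=N_0$, $f_0=\id_{N_0}$, $N_1=0$.

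For the inductive step, I would start by writing a triangle $A\xrightarrow{f}N_0\to B\to A[1]$ with $A\in\MM$ and $B\in\MM[1]*\cdots*\MM[\ell]$. First I check that $f$ is a right $\MM$-approximation: applying $\Hom_{\TT}(\MM,-)$ gives the exact sequence $\Hom_{\TT}(\MM,A)\to\Hom_{\TT}(\MM,N_0)\to\Hom_{\TT}(\MM,B)$, and an easy induction on the $*$-length, using the silting vanishing $\Hom_{\TT}(\MM,\MM[>0])=0$, shows $\Hom_{\TT}(\MM,B)=0$. Since $\TT$ is Krull--Schmidt, I may then split $A\cong M_0\oplus M''$ so that $f_0:=f|_{M_0}\colon M_0\to N_0$ is right minimal and still a right $\MM$-approximation. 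From the standard characterization of right minimality in a Krull--Schmidt triangulated category, the connecting map $g_1\colon N_1\to M_0$ in the triangle $N_1\xrightarrow{g_1}M_0\xrightarrow{f_0}N_0\to N_1[1]$ automatically lies in $J_{\TT}$.

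To close the induction I need two facts about $N_1$. The first is $N_1\in\TT_{\MM}^{\le 0}$: for $i\ge 2$, $\Hom_{\TT}(\MM,N_1[i])$ is squeezed between $\Hom_{\TT}(\MM,N_0[i-1])=0$ and $\Hom_{\TT}(\MM,M_0[i])=0$; for $i=1$, the approximation property makes $\Hom_{\TT}(\MM,M_0)\twoheadrightarrow\Hom_{\TT}(\MM,N_0)$ surjective, and $\Hom_{\TT}(\MM,M_0[1])=0$, so $\Hom_{\TT}(\MM,N_1[1])=0$. The second is $N_1\in\MM*\MM[1]*\cdots*\MM[\ell-1]$: the cone of $f$ decomposes as $N_1[1]\oplus M''[1]\cong B\in\MM[1]*\cdots*\MM[\ell]$, and since $\Hom_{\TT}(\MM[i],\MM[j])=0$ for $i<j$, iterated application of Lemma \ref{direct summand}(a) shows $\MM[1]*\cdots*\MM[\ell]$ is closed under summands, so $N_1[1]\in\MM[1]*\cdots*\MM[\ell]$ and therefore $N_1\in\MM*\MM[1]*\cdots*\MM[\ell-1]$. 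Induction then produces the remaining triangles, the process terminating at step $\ell$ with $N_\ell\in\MM$, giving $f_\ell$ an isomorphism and $N_{\ell+1}=0$.

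The main obstacle I anticipate is the bookkeeping around passing from the given approximation $f$ to its minimal version $f_0$: one must check that splitting off $M''[1]$ from the cone legitimately places $N_1[1]$ back inside $\MM[1]*\cdots*\MM[\ell]$ so that the induction hypothesis applies at the reduced length $\ell-1$. This pivots on the summand-closedness of $\MM[1]*\cdots*\MM[\ell]$, which in turn relies on the silting hypothesis propagating through the iterated $*$-product via Lemma \ref{direct summand}(a) — this is the only nontrivial algebraic input beyond the inductive setup.
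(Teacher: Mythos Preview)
Your proof is correct and follows essentially the same approach as the paper's: both obtain a triangle exhibiting $N_0\in\MM*(\MM[1]*\cdots*\MM[\ell])$, observe the resulting map from $\MM$ is a right approximation, pass to a minimal one by splitting off a summand, and then invoke Lemma \ref{direct summand}(a) to place $N_1$ back in $\MM*\cdots*\MM[\ell-1]$ for the recursive step. Your explicit verification that $N_1\in\TT_{\MM}^{\le0}$ is redundant (it follows immediately from $N_1\in\MM*\cdots*\MM[\ell-1]\subset\TT_{\MM}^{\le0}$), but otherwise the argument matches the paper's.
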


\begin{proof}
Since $N_0\in\TT_{\MM}^{\le0}$, we have
\[N_0\in\MM*\MM[1]*\cdots*\MM[\ell]\]
for some $\ell\ge0$ by Proposition \ref{generate3}.
We can assume $\ell>0$. Then we have a triangle
\[N'_1\to M'_0\xrightarrow{f'_0}N_0\to N'_1[1]\]
with $M'_0\in\MM$ and $N'_1\in\MM*\MM[1]*\cdots*\MM[\ell-1]$.
Since $N'_1\in\TT_{\MM}^{\le0}$, we have that $f'_0$ is a right $\MM$-approximation.
Thus we can write
$f'_0=(f_0\ 0):M_0=M'_0\oplus M''_0\to N_0$ with a minimal right $\MM$-approximation $f_0$.
Then we have a triangle
\[N_1\xrightarrow{g_1}M_0\xrightarrow{f_0}N_0\to N_1[1]\]
such that $g_1$ belongs to $J_{\TT}$ and $N_1$ is a summand of $N'_1$.
By Lemma \ref{direct summand}, we have $N_1\in\MM*\MM[1]*\cdots*\MM[\ell-1]$.
Repeating similar construction, we obtain the desired triangles.
\end{proof}

The following observation is often useful.

\begin{lemma}\label{no common summand}
Let $\MM\in\silt\TT$ and $N_0,N_0'\in\TT_{\MM}^{\le0}$.
For $N_0$, we take $\ell\ge0$ and triangles in Proposition \ref{resolution}.
Also for $N_0'$, we take triangles 
\[\xymatrix@R=0.2cm{
N_1'\ar[r]^{g_1'}&M_0'\ar[r]^{f_0'}&N_0'\ar[r]&N_1'[1],\\
&\cdots,\\
N_{\ell'}'\ar[r]^{g_\ell'}&M_{\ell'-1}'\ar[r]^{f_{\ell'-1}'}&N_{\ell'-1}'\ar[r]&N_{\ell'}'[1],\\
0\ar[r]^{g_{\ell'+1}'}&M_{\ell'}'\ar[r]^{f_{\ell'}'}&N_{\ell'}'\ar[r]&0,
}\]
satisfying the same properties.
If $\Hom_{\TT}(N_0,N_0'[\ell])=0$ holds, then we have $(\add M_\ell)\cap(\add M_0')=0$.
\end{lemma}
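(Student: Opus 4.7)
The plan is to prove the contrapositive: assuming a non-zero object $X$ belongs to $(\add M_\ell) \cap (\add M_0')$, I will produce a non-zero element of $\Hom_\TT(N_0, N_0'[\ell])$, contradicting the hypothesis. Passing to an indecomposable summand, I may take $X$ indecomposable. Write $\pi\colon M_\ell \to X$ and $j_\ell\colon X \to M_\ell = N_\ell$ for the canonical split projection and split inclusion (with $\pi j_\ell = 1_X$; the identification $N_\ell = M_\ell$ comes from the trivial last triangle of Proposition~\ref{resolution}), and $j_0\colon X \to M_0'$ for the canonical split inclusion; set $\iota := f_0' \circ j_0 \colon X \to N_0'$. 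The candidate morphism is
\[
\psi := \iota[\ell] \circ \pi[\ell] \circ \delta^{(\ell)} \colon N_0 \longrightarrow N_\ell[\ell] \longrightarrow X[\ell] \longrightarrow N_0'[\ell],
\]
where $\delta^{(\ell)} \colon N_0 \to N_\ell[\ell]$ is the composition of the shifted connecting morphisms $w_i \colon N_{i-1} \to N_i[1]$ of the resolution triangles.

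To detect $\psi \ne 0$, I introduce $\alpha_\ell := \iota\pi \in \Hom_\TT(N_\ell, N_0')$ and recursively $\alpha_{i-1} := \alpha_i[1] \circ w_i$ for $i = \ell, \ldots, 1$, so that $\alpha_0 = \psi$. First, $\iota \ne 0$: if $\iota = f_0' j_0 = 0$ then the endomorphism $1_{M_0'} - j_0 p_0$ (with $p_0$ any retraction of $j_0$) would satisfy $f_0' \circ (1_{M_0'} - j_0 p_0) = f_0'$ without being an isomorphism (since it annihilates $X \ne 0$), contradicting the minimality of $f_0'$; hence $\iota\pi \ne 0$ as well, because composition with $j_\ell$ recovers $\iota$. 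Next, applying $\Hom_\TT(-, N_0'[\ell - i])$ to the triangle $N_i \xrightarrow{g_i} M_{i-1} \xrightarrow{f_{i-1}} N_{i-1} \to N_i[1]$ identifies the kernel of the transition $\alpha_i \mapsto \alpha_{i-1}$ with the image of $g_i^* \colon \Hom_\TT(M_{i-1}, N_0'[\ell - i]) \to \Hom_\TT(N_i, N_0'[\ell - i])$. For every intermediate index $1 \le i \le \ell - 1$ one has $\ell - i \ge 1$, and since $M_{i-1} \in \MM$ and $N_0' \in \TT_\MM^{\le 0}$ the relevant $\Hom$-group vanishes; so these transitions are injective and non-vanishing propagates automatically. (When $\ell = 0$ the non-vanishing $\alpha_0 = \iota\pi \ne 0$ already concludes the proof.)

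The main obstacle is the top-level transition $i = \ell$, where $\Hom_\TT(M_{\ell-1}, N_0')$ need not vanish. To overcome it, I would argue by contradiction: suppose $\iota\pi = \eta \circ g_\ell$ for some $\eta \colon M_{\ell - 1} \to N_0'$. Pre-composing with $j_\ell$ and using $\pi j_\ell = 1_X$ yields $\iota = \eta \circ (g_\ell j_\ell)$. Because $M_{\ell - 1} \in \MM$ and $f_0'$ is a right $\MM$-approximation, $\eta$ factors as $f_0' \circ \tilde\eta$ for some $\tilde\eta \colon M_{\ell - 1} \to M_0'$, so $\iota = f_0' \circ (\tilde\eta g_\ell j_\ell)$; comparing with $\iota = f_0' j_0$ gives $f_0'(j_0 - \tilde\eta g_\ell j_\ell) = 0$. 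The defining triangle $N_1' \xrightarrow{g_1'} M_0' \xrightarrow{f_0'} N_0' \to N_1'[1]$ then furnishes $\xi \colon X \to N_1'$ with $j_0 = \tilde\eta g_\ell j_\ell + g_1' \xi$. Since $g_\ell, g_1' \in J_\TT$ and $J_\TT$ is an ideal, the right-hand side lies in $J_\TT(X, M_0')$; yet $j_0$ is a split monomorphism with $X \ne 0$ and therefore is not in $J_\TT(X, M_0')$, a contradiction. Hence $\alpha_\ell$ survives the top transition, and the propagation through the intermediate injective transitions gives $\psi = \alpha_0 \ne 0$, contradicting the hypothesis.
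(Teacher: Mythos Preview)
Your proof is correct and rests on the same key computation as the paper's: both arrive at expressing a split monomorphism $X\hookrightarrow M_0'$ as a sum of morphisms factoring through $g_\ell$ and $g_1'$, which is impossible since $g_\ell,g_1'\in J_{\TT}$. The paper organizes this as a direct argument---showing that \emph{every} morphism $a\colon M_\ell\to M_0'$ lies in $J_{\TT}$ (using the chain of isomorphisms $\Hom_{\TT}(N_{\ell-1}[-1],N_0')\simeq\cdots\simeq\Hom_{\TT}(N_0[-\ell],N_0')=0$ to set up a morphism of triangles)---whereas you run the contrapositive, propagating a nonzero $\alpha_\ell$ down through the connecting maps to produce a nonzero $\psi\in\Hom_{\TT}(N_0,N_0'[\ell])$; your ``intermediate transitions are injective'' step is exactly the paper's chain of isomorphisms read in the other direction, and your top-level obstruction argument is the same factorization $a=b'g_\ell+g_1'a'$ specialized to $a=j_0\pi$ and restricted along $j_\ell$.
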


\begin{proof}
For $\ell=0$, the assertion follows from right minimality of $f_0'$. So we assume $\ell>0$.
We only have to show that any morphism $a:M_\ell\to M_0'$ belongs
to $J_{\TT}$. Applying $\Hom_{\TT}(-,N_0')$ to triangles in Proposition \ref{resolution}, we have
\[\Hom_{\TT}(N_{\ell-1}[-1],N_0')\simeq\Hom_{\TT}(N_{\ell-2}[-2],N_0')
\simeq\cdots\simeq\Hom_{\TT}(N_0[-\ell],N_0')=0.\]
Thus we have the following commutative diagram of triangles.
\[\xymatrix{
N_{\ell-1}[-1] \ar[r] \ar[d] & M_\ell \ar[r]^{g_{\ell}} \ar[d]^{a} & M_{\ell-1} \ar[r]^{f_{\ell-1}} \ar[d]^{b} & N_{\ell-1}\ar[d] \\
N_1' \ar[r]^{g_{1}'} & M_0' \ar[r]^{f_{0}'} & N_0'\ar[r]&N_1'[1].
}\]
Since $f_0'$ is a right $\MM$-approximation,
there exists $b':M_{\ell-1}\to M_0'$ such that $b=f_0'b'$.
Since $f_0'(a-b'g_\ell)=0$, there exists $a':M_\ell\to N_1'$
such that $a=b'g_\ell+g_1'a'$. Since both $g_\ell$ and $g_1'$ belong to $J_{\TT}$,
we have $a\in J_{\TT}$.
Thus the proof is completed.
\end{proof}

As an application, we have the following result.

\begin{theorem}\label{indecomposable case}
If $\TT$ has an indecomposable silting object $M$, 
then we have $\silt{\TT}=\{M[i]\ |\ i\in \Bbb{Z}\}$.  
\end{theorem}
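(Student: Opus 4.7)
The strategy is to show every $\NN \in \silt\TT$ equals $\add M[p]$ for some $p \in \Z$. Set $\MM := \add M$. By Proposition \ref{existence of additive generator}, $\NN = \add N$ with $N$ basic; write $N = \bigoplus_j N_j$ with $N_j$ pairwise non-isomorphic indecomposables. It suffices to prove (i) each $N_j$ is isomorphic to some shift $M[p_j]$ and (ii) all $p_j$ coincide; basicness of $N$ then forces $N \cong M[p]$, and the reverse inclusion $\{\add M[i] \mid i \in \Z\} \subset \silt\TT$ is trivial by shift-invariance of the silting property.

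For (i), fix $N_j$ and set $p_j := \max\{i \in \Z \mid \Hom_{\TT}(M, N_j[i]) \neq 0\}$. This maximum exists: the upper bound comes from Proposition \ref{silting vanishing}, and the set is nonempty because $\Hom_{\TT}(M[\Z], N_j) = 0$ would, via $\thick M = \TT$, force $N_j = 0$. After shifting, $N_j[-p_j] \in \TT_{\MM}^{\le 0}$ with $\Hom_{\TT}(M, N_j[-p_j]) \neq 0$. Choose the minimal $\ell \ge 0$ such that $N_j[-p_j] \in \MM * \MM[1] * \cdots * \MM[\ell]$ (available by Proposition \ref{generate3}) and apply Proposition \ref{resolution} with this $\ell$, yielding terms $M_0, \ldots, M_\ell \in \MM$. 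Minimality of $\ell$ forces $M_\ell \neq 0$, while $\Hom_{\TT}(M, N_j[-p_j]) \neq 0$ together with $f_0 \colon M_0 \to N_j[-p_j]$ being a right $\MM$-approximation forces $M_0 \neq 0$. If $\ell \ge 1$, silting of $\NN$ gives $\Hom_{\TT}(N_j[-p_j], N_j[-p_j][\ell]) = \Hom_{\TT}(N_j, N_j[\ell]) = 0$, so Lemma \ref{no common summand} (applied with $N_0 = N_0' = N_j[-p_j]$) yields $(\add M_\ell) \cap (\add M_0) = 0$; but $M$ being indecomposable forces both $\add M_\ell$ and $\add M_0$ to equal $\MM \neq 0$, a contradiction. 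Hence $\ell = 0$, so $N_j[-p_j] \in \MM$, and indecomposability gives $N_j[-p_j] \cong M$, i.e., $N_j \cong M[p_j]$.

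Part (ii) is immediate: if $p_{j_1} < p_{j_2}$ for two summands, the silting condition on $\NN$ requires $\Hom_{\TT}(N_{j_2}, N_{j_1}[p_{j_2} - p_{j_1}]) = 0$, but this group equals $\End_{\TT}(M)$, which is nonzero. Thus all $p_j$ coincide, and basicness forces $\NN = \add M[p]$. The main obstacle is engineering the simultaneous non-vanishing of $M_0$ and $M_\ell$ so that Lemma \ref{no common summand} becomes productive: the specific shift by $p_j$ is what ensures $M_0 \neq 0$, minimality of $\ell$ is what ensures $M_\ell \neq 0$, and indecomposability of $M$ is what collapses both $\add M_0$ and $\add M_\ell$ to the same nonzero subcategory $\MM$, yielding the contradiction that pins $\ell$ down to $0$.
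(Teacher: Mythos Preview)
Your proof is correct (modulo a sign slip: with $p_j=\max\{i:\Hom_\TT(M,N_j[i])\neq0\}$ the shift landing in $\TT_\MM^{\le0}$ is $N_j[p_j]$, not $N_j[-p_j]$; the slip is consistent throughout and harmless) and uses the same mechanism as the paper, namely Proposition~\ref{resolution} together with Lemma~\ref{no common summand} to force $\ell=0$. The only difference is that you work summand-by-summand and then match the shifts in step~(ii), whereas the paper applies the resolution directly to the whole basic object $N$ and concludes $N\cong M$ in one stroke, making your step~(ii) unnecessary.
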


\begin{proof}
Let $N$ be a basic silting object in $\TT$. 
Take the smallest integer $k\in\Z$ such that $\homo{\TT}{M}{N[k]}\not=0$.
Replacing $N$ by $N[k]$, we can assume $N\in \TT^{\leq 0}_{M}$
and $\homo{\TT}{M}{N}\not=0$. 
We have triangles in Proposition \ref{resolution}. 
Then we have $M_0\neq0$ since $\homo{\TT}{M}{N}\neq0$,
and moreover we can assume $M_{\ell}\neq0$.

Assume $\ell>0$. Then we have $\Hom_{\TT}(N,N[\ell])=0$ and so
$\add{M_{\ell}}\cap \add{M_{0}}=0$ by Lemma \ref{no common summand}. 
Since both $M_0$ and $M_{\ell}$ are non-zero objects in $\add M$
where $M$ is indecomposable, this is a contradiction.
Thus we have $\ell=0$ and $N\simeq M_0\in \add M$.
Since $N$ is basic, we have $N\simeq M$.
\end{proof}


Next we shall show the following description of Grothendieck groups of triangulated categories with silting subcategories.

\begin{theorem}\label{grothendieck}
Let $\TT$ be a Krull-Schmidt triangulated category with a silting subcategory $\MM$.
Then the Grothendieck group $K_0(\TT)$ of $\TT$ is a free abelian group with a basis $\ind\MM$.
\end{theorem}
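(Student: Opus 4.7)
The plan is to establish two things: first, that $\ind\MM$ spans $K_0(\TT)$, and second, that these classes are $\Z$-linearly independent.

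For spanning, I would invoke Proposition \ref{generate3}(a): every $X\in\TT$ lies in some $\MM[-\ell]\ast\MM[1-\ell]\ast\cdots\ast\MM[\ell]$. Iterating the triangle relation $[B]=[A]+[C]$ coming from $A\to B\to C\to A[1]$, together with $[M[1]]=-[M]$ (from the triangle $M\to 0\to M[1]\to M[1]$), expresses $[X]$ as a $\Z$-combination of classes $[M]$ with $M\in\MM$; the Krull-Schmidt property then rewrites each $[M]$ as a finite $\Z$-combination of classes from $\ind\MM$.

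For linear independence, I would construct an explicit inverse homomorphism $\Phi:K_0(\TT)\to\bigoplus_{N\in\ind\MM}\Z$ satisfying $\Phi([N])=e_N$ for every $N\in\ind\MM$. Given $X\in\TT_\MM^{\le 0}$, Proposition \ref{resolution} produces a finite tower of triangles with terms $M_0,M_1,\ldots,M_\ell\in\MM$ in which each $f_i$ is a minimal right $\MM$-approximation and each $g_{i+1}$ is radical; Krull-Schmidt plus right minimality forces the sequence $(M_0,\ldots,M_\ell)$ to be unique up to isomorphism. I would then set
\[\phi(X):=\sum_{i=0}^{\ell}(-1)^i[M_i]_{\rm split}\in\bigoplus_{N\in\ind\MM}\Z,\]
using the split Grothendieck group of $\MM$. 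For arbitrary $X\in\TT$, Proposition \ref{silting vanishing} allows us to pick $n\gg 0$ with $X[n]\in\TT_\MM^{\le 0}$, and we define $\Phi([X]):=(-1)^n\phi(X[n])$.

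The verifications to carry out are: (i) $\Phi([X])$ does not depend on $n$, which reduces to showing $\phi(X[1])=-\phi(X)$ whenever both $X$ and $X[1]$ lie in $\TT_\MM^{\le 0}$; this follows by inspecting how a minimal $\MM$-resolution of $X$ shifts and re-minimizes to give that of $X[1]$ with alternating signs reversed. (ii) The assignment $\Phi$ is additive on triangles: after a common shift we may assume $X\to Y\to Z\to X[1]$ has all three terms in $\TT_\MM^{\le 0}$, and I would apply a horseshoe-style argument, lifting the triangle to a compatible diagram of $\MM$-resolutions termwise summed from those of $X$ and $Z$, so that $\phi(Y)=\phi(X)+\phi(Z)$ for the (a priori non-minimal) assembled resolution; the reduction to the minimal resolution then only splits off summands that cancel between adjacent degrees, preserving the alternating sum. (iii) $\Phi([N])=e_N$ for $N\in\ind\MM$, which is immediate because the minimal right $\MM$-approximation of $N$ is the identity and terminates at stage zero.

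The main obstacle is step (ii): one needs a careful octahedral/horseshoe construction to lift a triangle to a triangle of $\MM$-resolutions, and one must then argue that passing between minimal and non-minimal resolutions does not alter the class $\sum(-1)^i[M_i]_{\rm split}$. Once $\Phi$ is verified to be a well-defined homomorphism satisfying $\Phi([N])=e_N$, independence of $\{[N]\}_{N\in\ind\MM}$ in $K_0(\TT)$ follows at once, completing the proof that $K_0(\TT)$ is free abelian on $\ind\MM$.
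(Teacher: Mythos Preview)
Your proposal follows the same overall strategy as the paper: define a map $\phi$ (the paper calls it $\gamma$) from objects to $\Z^{\ind\MM}$ via minimal $\MM$-resolutions (Proposition~\ref{resolution}), extend to all of $\TT$ by shifting, and then verify additivity on triangles to obtain a retraction $K_0(\TT)\to\Z^{\ind\MM}$.

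The only substantive difference lies in how additivity (your step~(ii)) is established. You propose a horseshoe construction---building a (non-minimal) $\MM$-resolution of $Y$ with terms $M_i^X\oplus M_i^Z$ via the $3{\times}3$-lemma---followed by a separate argument that the alternating sum is unchanged when passing from a non-minimal resolution to the minimal one. This is correct, but the second half needs more than ``splits off summands that cancel between adjacent degrees'': one should prove by induction on the length of the given resolution that the first non-minimal step decomposes as $M_0=\tilde M_0\oplus M'$ with cone $\tilde N_1\oplus M'$, and then invoke the inductive hypothesis on the shorter resolution of $\tilde N_1\oplus M'$ together with additivity of $\phi$ on direct sums. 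The paper sidesteps this auxiliary lemma by proving additivity directly via a three-step induction (statements (i)$_\ell$, (ii)$_\ell$, (iii)$_\ell$) on the filtration length, using the octahedral axiom at each step and only ever the recursive identity $\gamma(Z)=\gamma(M_0)-\gamma(Z_1)$ coming from the minimal resolution. Your route is perfectly viable, just slightly longer; the paper's has the advantage of never needing to compare two different resolutions of the same object.
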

 
For an object $X\in\TT$, we denote by $\delta(X)$ the number of non-isomorphic indecomposable summands of $X$.
As an immediate consequence of Theorem \ref{grothendieck}, we have the following result.

\begin{corollary}\label{thm:number of indecomposable direct summands}
For any silting objects $M,N\in \TT$, we have $\delta (M)=\delta (N)$.
\end{corollary}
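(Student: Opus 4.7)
The plan is to show that the natural homomorphism
\[ \psi:\bigoplus_{M\in\ind\MM}\Z\cdot[M]\longrightarrow K_0(\TT),\qquad [M]\mapsto[M], \]
is an isomorphism. Surjectivity is straightforward from Proposition \ref{generate3}(a): any $X\in\TT$ lies in some $\MM[-\ell]*\cdots*\MM[\ell]$, so the defining relations of $K_0(\TT)$, combined with the identity $[M[1]]=-[M]$ read off the triangle $M\to 0\to M[1]\to M[1]$, express $[X]$ as a $\Z$-linear combination of the $[M]$ with $M\in\ind\MM$.

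For injectivity I would construct an explicit left inverse $\phi$ of $\psi$. On the subcategory $\TT_\MM^{\le0}$, Proposition \ref{resolution} produces canonical triangles involving minimal right $\MM$-approximations; since minimal right approximations are unique up to isomorphism, the summand multiplicities of each $M_i$ are invariants of $N$, so the formula
\[ \phi_0(N):=\sum_{i\ge0}(-1)^i[M_i]\in\bigoplus_{M\in\ind\MM}\Z\cdot[M] \]
is unambiguous. For a general $X\in\TT$, Proposition \ref{silting vanishing} lets me choose $n\ge0$ with $X[n]\in\TT_\MM^{\le0}$, and I set $\phi(X):=(-1)^n\phi_0(X[n])$; independence of $n$ reduces to the identity $\phi_0(Y[1])=-\phi_0(Y)$ whenever both $Y,Y[1]\in\TT_\MM^{\le0}$, which is immediate from comparing their (shifted) minimal resolutions.

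The decisive step is additivity on triangles $X\to Y\to Z\to X[1]$. After uniformly shifting into $\TT_\MM^{\le0}$, I would stitch not-necessarily-minimal right $\MM$-resolutions of $X$ and $Z$ into a horseshoe-type resolution of $Y$, for which the alternating sum of terms is manifestly additive. It then suffices to verify that replacing a right $\MM$-approximation by its minimal core only strips off a summand $K\in\MM$ from $M_i$ while simultaneously adjoining $K$ to $M_{i+1}$, so the alternating sum is unaffected; this bookkeeping rests on Lemma \ref{no common summand} together with the closure property in Lemma \ref{direct summand}(a). Granted this, $\phi$ descends to a homomorphism $K_0(\TT)\to\bigoplus_{M\in\ind\MM}\Z\cdot[M]$ satisfying $\phi\circ\psi=\operatorname{id}$, so $\psi$ is injective and hence an isomorphism.

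The main obstacle will be this last step, namely proving that the alternating-sum invariant is insensitive to the non-minimality of the $\MM$-resolution chosen in the horseshoe construction. Showing that any two right $\MM$-resolutions of a given object in $\TT_\MM^{\le0}$ differ by summands cancelling pairwise in adjacent positions is the crucial technical input; once this is in hand, both the well-definedness of $\phi$ and its additivity on triangles follow cleanly.
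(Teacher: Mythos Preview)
Your proposal is essentially the paper's own argument: the corollary is drawn as an immediate consequence of Theorem~\ref{grothendieck}, and your map $\phi$ is exactly the paper's map $\gamma$, defined via minimal $\MM$-resolutions (Proposition~\ref{resolution}) and extended to all of $\TT$ by shifting. The one substantive difference is in how additivity on triangles is established.

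The paper proves this in Lemma~\ref{gamma is on grothendieck} by a direct three-step induction on filtration length, using the octahedral axiom: given a triangle $X\to Y\to Z\to X[1]$ with $Z$ of longest filtration, one peels off only the first minimal-resolution triangle $Z_1\to M_0\to Z\to Z_1[1]$, and octahedron yields a new triangle $Z_1\to X\oplus M_0\to Y\to Z_1[1]$ entirely of shorter type. No non-minimal resolutions ever appear, so the comparison you flag as the ``main obstacle'' never arises.

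Your horseshoe route can also be made to work, but the citation of Lemma~\ref{no common summand} is misplaced. That lemma compares the terms of minimal resolutions of two \emph{different} objects under an Ext-vanishing hypothesis; it says nothing about the invariance of the alternating sum under passage from a non-minimal resolution to the minimal one. What you actually need is the elementary Krull--Schmidt fact that any right $\MM$-approximation splits as a right-minimal one plus a redundant summand $K\in\MM$, so that the cone acquires exactly $K$; iterating this shows any right $\MM$-resolution differs from the minimal one by pairs of identical summands in adjacent degrees, which cancel in the alternating sum. With that in hand your argument is complete, though the paper's octahedral induction is cleaner precisely because it never leaves the world of minimal resolutions.
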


Let us start with proving Theorem \ref{grothendieck}.

Since $\thick\MM=\TT$, we have that $\ind\MM$ generate $K_0(\TT)$.
It is enough to show that they are linearly independent.
To prove this, we shall define a map
\[\gamma:{\rm ob}\TT\to\Z^{\ind\MM}.\]
\begin{itemize}
\item The map $\gamma$ naturally identifies ${\rm ob}\MM$ with $\Z_{\ge0}^{\ind\MM}$.
\item For any $N\in\TT_{\MM}^{\le0}$, we take triangles in Proposition \ref{resolution} and put
\[\gamma(N):=\sum_{i=0}^\ell(-1)^i\gamma(M_i).\]
\item For general $N\in\TT$, take a sufficiently large $k$ such that $N[k]\in\TT_{\MM}^{\le0}$ and put
\[\gamma(N):=(-1)^k\gamma(N[k]).\]
\end{itemize}
In other words, for any $\ell\ge0$ and $M_i\in\MM$, we put
\[\gamma(M_{-\ell}[-\ell]*M_{1-\ell}[-\ell]*\cdots*M_{\ell-1}[\ell-1]*M_\ell[\ell]):=\sum_{i=-\ell}^\ell(-1)^i\gamma(M_i).\]
The crucial step is to prove the following observation.

\begin{lemma}\label{gamma is on grothendieck}
Let $X\to Y\to Z\to X[1]$ be a triangle in $\TT$. Then we have $\gamma(X)-\gamma(Y)+\gamma(Z)=0$.
\end{lemma}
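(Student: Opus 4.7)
The plan is to induct on the smallest $\ell\ge 0$ for which $X$, $Y$, $Z$ all lie in $\MM*\MM[1]*\cdots *\MM[\ell]$, after first reducing to the situation in which all three of $X,Y,Z$ lie in $\TT_\MM^{\le 0}$. Because the definition of $\gamma$ builds in the relation $\gamma(N[1])=-\gamma(N)$, the identity we wish to prove is equivalent to the corresponding identity for the shifted triangle $X[k]\to Y[k]\to Z[k]\to X[k+1]$; using Proposition \ref{silting vanishing} I would choose $k\gg 0$ so that all three shifted terms lie in $\TT_\MM^{\le 0}$.

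The main step is a horseshoe construction. Fix minimal right $\MM$-approximations $f^X\colon M^X\to X$ and $f^Z\colon M^Z\to Z$ with fibers $N^X$, $N^Z$ as in Proposition \ref{resolution}. Since $X\in\TT_\MM^{\le 0}$ we have $\Hom_\TT(M^Z,X[1])=0$, so $f^Z$ lifts along $Y\to Z$ to some $g\colon M^Z\to Y$. Combined with $M^X\to X\to Y$ this assembles into a morphism $h\colon M^X\oplus M^Z\to Y$, which a routine diagram chase (factor any $\MM$-morphism into $Y$ first through $f^Z$, then through $f^X$) shows to be a right $\MM$-approximation of $Y$. Applying the octahedral axiom to the factorization of $M^X\oplus M^Z\to Y\to Z$ yields a commutative diagram of triangles whose fibers fit into a triangle $N^X\to N^Y\to N^Z\to N^X[1]$, where $N^Y$ denotes the fiber of $h$.

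To feed the induction I need $N^Y\in\MM*\MM[1]*\cdots*\MM[\ell-1]$. By Proposition \ref{resolution} we already have $N^X,N^Z\in\MM*\MM[1]*\cdots*\MM[\ell-1]$, so $N^Y$ lies in the $*$-product of two such expressions. The inclusion $\MM[a]*\MM[b]\subset\MM[b]*\MM[a]$ for $a>b$ provided by Lemma \ref{generate}, together with the identity $\MM[i]*\MM[i]=\MM[i]$ (any extension between two copies of $\MM[i]$ splits since $\Hom_\TT(\MM,\MM[1])=0$), allows me to sort and collapse the repeated layers in the product, yielding the desired containment.

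Finally I would remark that $\gamma$ can be computed from any right $\MM$-approximation, not only a minimal one: if $M=M^{\min}\oplus P$ is a right $\MM$-approximation with minimal version $M^{\min}$, then its fiber splits as $N^{\min}_1\oplus P$, and the trivial summand $P$ will be cancelled by the next alternating step of any continuation of the resolution. Hence $\gamma(Y)=\gamma(M^X)+\gamma(M^Z)-\gamma(N^Y)$, $\gamma(X)=\gamma(M^X)-\gamma(N^X)$, and $\gamma(Z)=\gamma(M^Z)-\gamma(N^Z)$. The base case $\ell=0$ is immediate, since then $X,Y,Z\in\MM$ and the triangle splits by $\Hom_\TT(Z,X[1])=0$. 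The inductive step combines the inductive hypothesis $\gamma(N^X)-\gamma(N^Y)+\gamma(N^Z)=0$ with the above equalities to yield the identity. The main obstacle is the length control on $N^Y$; without the splitting identity $\MM[i]*\MM[i]=\MM[i]$ supplied by the silting vanishing, the induction parameter would not strictly decrease under the horseshoe step.
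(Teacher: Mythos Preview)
Your approach is correct and genuinely different from the paper's. You run a horseshoe-lemma induction: build a simultaneous $\MM$-approximation of $Y$ from those of $X$ and $Z$, obtain a triangle on the fibers, and drop the length parameter by one. The paper instead uses a finer three-step induction (i)$_\ell\Rightarrow$(ii)$_\ell\Rightarrow$(iii)$_\ell\Rightarrow$(i)$_{\ell+1}$, lowering the level of $Z$, then $Y$, then $X$, one at a time. Each of these steps needs only a \emph{single} octahedral together with the splitting $\Hom_\TT(M_0,X[1])=0$, so the paper never has to produce the triangle $N^X\to N^Y\to N^Z$ at all. Your route is more conceptual and closer to the classical horseshoe lemma; the paper's is more elementary in that it avoids the $3\times3$ lemma.

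Two places deserve tightening. First, the single octahedral you name, applied to $M^X\oplus M^Z\to Y\to Z$, yields the triangle $N^Y\to M^X\oplus N^Z\to X\to N^Y[1]$, not $N^X\to N^Y\to N^Z\to N^X[1]$; you need a second octahedral (equivalently, the $3\times3$ lemma for the morphism of triangles from the split triangle $M^X\to M^X\oplus M^Z\to M^Z$ to $X\to Y\to Z$, which is a morphism of triangles precisely because $\Hom_\TT(M^Z,X[1])=0$). This is standard and your ``routine diagram chase'' covers it, but the phrasing suggests one octahedral suffices. Second, your remark that $\gamma$ can be computed from a non-minimal approximation is exactly what is needed for $\gamma(Y)=\gamma(M^X)+\gamma(M^Z)-\gamma(N^Y)$; the argument you sketch (split off the redundant summand $P$ and watch it cancel at the next stage, using additivity of minimal approximations on direct sums) is correct, but it is worth stating as a separate lemma since the paper's definition of $\gamma$ is tied to minimal resolutions.
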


\begin{proof}
Without loss of generality, we can assume $X,Y,Z\in\TT_{\MM}^{\le0}$.
To use induction, we consider the following assertions for $\ell\ge0$.

(i)$_\ell$ The statement is true if $X,Y,Z\in \MM*\MM[1]*\cdots*\MM[\ell]$.

(ii)$_\ell$ The statement is true if $X,Y\in \MM*\MM[1]*\cdots*\MM[\ell]$ and $Z\in\MM*\MM[1]*\cdots*\MM[\ell+1]$.

(iii)$_\ell$ The statement is true if $X\in \MM*\MM[1]*\cdots*\MM[\ell]$ and $Y,Z\in\MM*\MM[1]*\cdots*\MM[\ell+1]$.

The statement (i)$_0$ is true since any triangle $X\to Y\to Z\to X[1]$ with $X,Y,Z\in\MM$ splits,
so we have $Y\simeq X\coprod Z$ and $\gamma(X)-\gamma(Y)+\gamma(Z)=0$.

We shall show (i)$_\ell\Rightarrow$(ii)$_\ell$ for any $\ell\ge0$.

Let $X\to Y\to Z\to X[1]$ be a triangle with $X,Y\in \MM*\MM[1]*\cdots*\MM[\ell]$ and $Z\in\MM*\MM[1]*\cdots*\MM[\ell+1]$.
Take a triangle $Z_1\to M_0\to Z\to Z_1[1]$ such that $M_0\in\MM$, $Z_1\in\MM*\MM[1]*\cdots\MM[\ell]$ and
\begin{equation}\label{g1}
\gamma(Z)=\gamma(M_0)-\gamma(Z_1).
\end{equation}
By octahedral axiom, we have the following commutative diagram of triangles:
\[\xymatrix@R=.4cm{
&Z_1[1]\ar@{=}[r]&Z_1[1]\\
X\ar[r]&Y\ar[r]\ar[u]&Z\ar[r]\ar[u]&X[1]\\
X\ar[r]\ar@{=}[u]&W\ar[r]\ar[u]&M_0\ar[r]\ar[u]&X[1]\ar@{=}[u]\\
&Z_1\ar@{=}[r]\ar[u]&Z_1\ar[u].
}\]
Since $\Hom_{\TT}(M_0,X[1])=0$, the lower horizontal triangle splits and we have $W\simeq X\oplus M_0$. In particular, we have
\begin{equation}\label{g2}
\gamma(W)=\gamma(X)+\gamma(M_0).
\end{equation}
Since the left vertical triangle $Z_1\to W\to Y\to Z_1[1]$
satisfies $Z_1,W,Y\in\MM*\MM[1]*\cdots*\MM[\ell]$,
our assumption (i)$_\ell$ implies
\begin{equation}\label{g3}
\gamma(Z_1)-\gamma(W)+\gamma(Y)=0.
\end{equation}
Using \eqref{g1}, \eqref{g2} and \eqref{g3}, we have
\[\gamma(X)-\gamma(Y)+\gamma(Z)=
(\gamma(Z)-\gamma(M_0)+\gamma(Z_1))+(\gamma(X)-\gamma(W)+\gamma(M_0))-(\gamma(Z_1)-\gamma(W)+\gamma(Y))=0.\]
Thus (ii)$_\ell$ holds.

By a quite similar argument, one can show (ii)$_\ell\Rightarrow$(iii)$_\ell\Rightarrow$(i)$_{\ell+1}$ for any $\ell\ge0$.
Thus the assertion follows inductively.
\end{proof}

Now we are ready to prove Theorem \ref{grothendieck}.

By Lemma \ref{gamma is on grothendieck}, we have a homomorphism $\gamma:K_0(\TT)\to\Z^{\ind\MM}$.
Since $\gamma(\ind\MM)$ is a basis of $\Z^n$,
the set $\ind\MM$ must be linearly independent in $K_0(\TT)$.
Thus it forms a basis of $K_0(\TT)$.
\qed


\subsection{Silting mutation}

The aim of this subsection is to introduce silting mutation and give its basic properties.
Let $\TT$ be a triangulated category. We do not assume anything else on $\TT$ unless otherwise stated.

\begin{definition}\label{mutation}
Let $\MM\in\silt\TT$. For a covariantly finite subcategory $\DD$ of $\MM$, we define a subcategory $\mu^+(\MM;\DD)$ of $\TT$ as follows:
For any $M\in\MM$ we take a left $\DD$-approximation $f:M\to D$ and a triangle
\begin{equation}\label{exchange}
M\xrightarrow{f}D\xrightarrow{g}N_M\to M[1].
\end{equation}
We put
\begin{eqnarray*}
\mu^+(\MM;\DD)&:=&\add(\DD\cup\{N_M\ |\ M\in\MM\}).
\end{eqnarray*}
It is easily checked that $\mu^+(\MM;\DD)$ does not depend on a choice of left approximation $f$.
We call $\mu^+(\MM;\DD)$ a \emph{left mutation} of $\MM$.
Dually, we define a \emph{right mutation} $\mu^-(\MM;\DD)$ for a contravariantly finite subcategory $\DD$ of $\MM$.
\emph{(Silting) mutation} is a left or right mutation.
\end{definition}

\begin{theorem}\label{thm:mutation is silting}
Any mutation of a silting subcategory is again a silting subcategory.
\end{theorem}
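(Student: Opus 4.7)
The plan is to verify the two silting axioms for $\MM':=\mu^+(\MM;\DD)$; the right-mutation case is dual. Generation is immediate from the defining triangles $M\xrightarrow{f}D\xrightarrow{g}N_M\to M[1]$, which show $M\in\thick(\DD\cup\{N_M\})\subseteq\thick\MM'$, so $\thick\MM'\supseteq\thick\MM=\TT$. The substantive work is the $\Hom$-vanishing $\Hom_{\TT}(\MM',\MM'[>0])=0$, which I would establish in four cases according to whether each of the source and target lies in $\DD$ or is of the form $N_M$.

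When both source and target lie in $\DD\subseteq\MM$, vanishing is trivial. When the source lies in $\DD$ and the target equals some $N_{M'}$, apply $\Hom_{\TT}(\DD,-)$ to the defining triangle of $N_{M'}$; for $i>0$ both flanking terms lie in $\Hom_{\TT}(\MM,\MM[>0])$ and so vanish. When the source is $N_M$ and the target $Y$ lies in $\DD$, applying $\Hom_{\TT}(-,Y)$ to the triangle for $N_M$ yields
\[\Hom_{\TT}(D,Y[i-1])\xrightarrow{f^*}\Hom_{\TT}(M,Y[i-1])\to\Hom_{\TT}(N_M,Y[i])\to\Hom_{\TT}(D,Y[i]);\]
for $i\ge 2$ both flanking terms vanish by silting of $\MM$, while for $i=1$ the right term $\Hom_{\TT}(D,Y[1])$ still vanishes and the map $f^*$ is surjective precisely because $f$ is a left $\DD$-approximation and $Y\in\DD$, forcing $\Hom_{\TT}(N_M,Y[1])=0$.

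The remaining case, source $N_M$ and target $N_{M'}$, reduces to the previous ones. Applying $\Hom_{\TT}(-,N_{M'})$ to the triangle for $N_M$, the right-hand term $\Hom_{\TT}(D,N_{M'}[i])$ vanishes by the second case above (applied with source $D\in\DD$). For $i\ge 2$ the remaining term $\Hom_{\TT}(M,N_{M'}[i-1])$ is killed by applying $\Hom_{\TT}(M,-)$ to the defining triangle of $N_{M'}$ and using the silting vanishing in $\MM$. For $i=1$ one must show that $\Hom_{\TT}(D,N_{M'})\to\Hom_{\TT}(M,N_{M'})$ is surjective: any $\alpha\colon M\to N_{M'}$ factors through $g'\colon D'\to N_{M'}$ as $\alpha=g'\beta$ (because $\Hom_{\TT}(M,M'[1])=0$ by silting of $\MM$), and the component $\beta\colon M\to D'$ then lifts through $f$ to some $\tilde\beta\colon D\to D'$ (because $D'\in\DD$ and $f$ is a left $\DD$-approximation); the composite $g'\tilde\beta\colon D\to N_{M'}$ is the required lift. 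The only place where mutation is genuinely exploited, as opposed to the silting property of $\MM$ alone, is this surjectivity coming from the approximation property at shift $1$; that is the main obstacle, and everything else is long-exact-sequence bookkeeping.
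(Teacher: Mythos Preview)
Your proof is correct and follows essentially the same approach as the paper: generation from the defining triangle, then four-case $\Hom$-vanishing via long exact sequences, with the left-approximation property of $f$ supplying the crucial surjectivity at shift one. The only cosmetic difference is in the $(N_M,N_{M'})$ case: rather than your explicit factoring argument at $i=1$, the paper first records $\Hom_{\TT}(N_M,M'[>1])=0$ (applying $\Hom_{\TT}(-,M'[>1])$ to the triangle of $N_M$) and then applies $\Hom_{\TT}(N_M,-[>0])$ to the triangle of $N_{M'}$, handling all $i>0$ uniformly.
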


\begin{proof}
The triangle \eqref{exchange} shows $\thick\mu^+(\MM;\DD)\supset\thick\MM=\TT$.

We only have to show $\Hom_{\TT}(\mu^+(\MM;\DD),\mu^+(\MM;\DD)[>0])=0$.

Applying $\Hom_{\TT}(\DD,-[>0])$ to \eqref{exchange}, we have an exact sequence
\begin{eqnarray*}
0=\Hom_{\TT}(\DD,D[>0])\to\Hom_{\TT}(\DD,N_M[>0])\to\Hom_{\TT}(\DD,M[>1])=0
\end{eqnarray*}
and so $\Hom_{\TT}(\DD,N_M[>0])=0$.
Applying $\Hom_{\TT}(-,\DD[>0])$ to \eqref{exchange}, we have an exact sequence
\begin{eqnarray*}
\Hom_{\TT}(D[1],\DD[>0])\xrightarrow{\cdot f[1]}
\Hom_{\TT}(M[1],\DD[>0])\to\Hom_{\TT}(N_M,\DD[>0])\to\Hom_{\TT}(D,\DD[>0])=0.
\end{eqnarray*}
Since the above $(\cdot f[1])$ is surjective, we have $\Hom_{\TT}(N_M,\DD[>0])=0$. 
Applying $\Hom_{\TT}(-,M[>1])$ to \eqref{exchange}, we have an exact sequence
\begin{equation*}
0=\Hom_{\TT}(M[1],M[>1])\to\Hom_{\TT}(N_M,M[>1])\to\Hom_{\TT}(D,M[>1])=0
\end{equation*}
and so $\Hom_{\TT}(N_M,M[>1])=0$.
Applying $\Hom_{\TT}(N_M,-[>0])$ to \eqref{exchange}, we have an exact sequence
\begin{equation*}
0=\Hom_{\TT}(N_M,D[>0])\to\Hom_{\TT}(N_M,N_M[>0])\to\Hom_{\TT}(N_M,M[>1])=0
\end{equation*}
and so we have $\Hom_{\TT}(N_M,N_M[>0])=0$.

Consequently we have $\Hom_{\TT}(\mu^+(\MM;\DD),\mu^+(\MM;\DD)[>0])=0$.
\end{proof}

In general silting mutation of a tilting subcategory is not necessarily a tilting subcategory. 
For this, we have the following criterion. 

\begin{theorem}\label{when silting is tilting}
Let $\MM$ be a tilting subcategory of $\TT$.
\begin{itemize}
\item[(a)] For a covariantly finite subcategory $\DD$ of $\MM$, the following conditions are equivalent.
\begin{itemize}
\item[(i)] $\mu^+(\MM;\DD)$ is tilting.
\item[(ii)] Any $M\in\MM$ has a left $\DD$-approximation $f$ such that
$\homo{\TT}{\DD}{f}$ is injective.
\end{itemize}
\item[(b)] For a contravariantly finite subcategory $\DD$ of $\MM$, the following conditions are equivalent.
\begin{itemize}
\item[(i)] $\mu^-(\MM;\DD)$ is tilting.
\item[(ii)] Any $M\in\MM$ has a right $\DD$-approximation $g$ such that
$\homo{\TT}{g}{\DD}$ is injective.
\end{itemize}
\end{itemize}
In these cases mutation is called \emph{tilting mutation}.
\end{theorem}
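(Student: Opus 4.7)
The plan is to exploit the fact that $\mu^+(\MM;\DD)$ is already silting by Theorem~\ref{thm:mutation is silting}, so I only need to characterize when $\Hom_\TT(\mu^+(\MM;\DD), \mu^+(\MM;\DD)[<0]) = 0$. Since $\mu^+(\MM;\DD) = \add(\DD \cup \{N_M \mid M \in \MM\})$, this splits into the four vanishings $\Hom_\TT(\DD, \DD[<0])$, $\Hom_\TT(\DD, N_M[<0])$, $\Hom_\TT(N_M, \DD[<0])$ and $\Hom_\TT(N_M, N_{M'}[<0])$. The first is automatic since $\DD \subset \MM$ and $\MM$ is tilting. For each of the remaining three, the approach is to apply $\Hom_\TT$ in the appropriate variable to the exchange triangle \eqref{exchange} and run the resulting long exact sequence, using $\Hom_\TT(\MM, \MM[\neq 0]) = 0$ to annihilate as many terms as possible.

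Applying $\Hom_\TT(-, \DD[i])$ with $i < 0$ to \eqref{exchange} yields $\Hom_\TT(N_M, \DD[<0]) = 0$ unconditionally, since both neighbouring terms $\Hom_\TT(M, \DD[i-1])$ and $\Hom_\TT(D, \DD[i])$ vanish by the tilting property of $\MM$. Applying $\Hom_\TT(\DD, -[i])$ to \eqref{exchange} analogously gives $\Hom_\TT(\DD, N_M[i]) = 0$ automatically for $i \le -2$, while at $i = -1$ the long exact sequence pinches down to
\[0 \to \Hom_\TT(\DD, N_M[-1]) \to \Hom_\TT(\DD, M) \xrightarrow{\Hom_\TT(\DD, f)} \Hom_\TT(\DD, D),\]
so this single term vanishes \emph{precisely} when $\Hom_\TT(\DD, f)$ is injective. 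This reading of the $i=-1$ piece immediately delivers the (i)$\Rightarrow$(ii) direction of (a), and conversely furnishes $\Hom_\TT(\DD, N_M[<0]) = 0$ under the hypothesis of (ii).

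For the last vanishing $\Hom_\TT(N_M, N_{M'}[<0])$, I would apply $\Hom_\TT(-, N_{M'}[i])$ to the triangle for $M$: the term $\Hom_\TT(D, N_{M'}[i])$ vanishes by the previous paragraph (invoking (ii) for $M'$, since $D \in \DD$), while the other neighbour $\Hom_\TT(M[1], N_{M'}[i]) = \Hom_\TT(M, N_{M'}[i-1])$ is then killed by applying $\Hom_\TT(M, -)$ to the triangle for $M'$, whose two neighbours $\Hom_\TT(M, D'[i-1])$ and $\Hom_\TT(M, M'[i])$ are again zero by the tilting hypothesis on $\MM$. This completes (ii)$\Rightarrow$(i), and part (b) follows from the dual argument carried out in $\TT^{\mathrm{op}}$. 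The whole argument is formal; the main care required is in this last step, which must weave together the exchange triangles for both $M$ and $M'$ and re-use the vanishing from the preceding paragraph, but no genuine obstacle arises.
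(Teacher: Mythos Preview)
Your proof is correct and follows essentially the same approach as the paper: both arguments reduce the tilting condition to the four cross-$\Hom$ vanishings and then dispatch each one by applying $\Hom_\TT$ in the appropriate variable to the exchange triangle \eqref{exchange}, using $\Hom_\TT(\MM,\MM[\neq0])=0$ to annihilate the flanking terms. The only organizational difference is that you invoke silting at the outset and check only the $[<0]$ vanishings, whereas the paper verifies $\Hom_\TT(N_M,N_{M'}[\neq0,1])=0$ directly and appeals to Theorem~\ref{thm:mutation is silting} at the end to kill the degree-$1$ piece; your treatment is also slightly more explicit in distinguishing $M$ from $M'$ in the final step.
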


\begin{proof}
We only prove the statement (a). 
The proof is parallel to that of Theorem \ref{thm:mutation is silting}.

Applying $\Hom_{\TT}(\DD,-[\neq0])$ to \eqref{exchange}, we have an exact sequence
\begin{eqnarray*}
0=\Hom_{\TT}(\DD,D[\neq0])\to\Hom_{\TT}(\DD,N_M[\neq0])\to\Hom_{\TT}(\DD,M[\neq1])\xrightarrow{f\cdot}\Hom_{\TT}(\DD,D[\neq1]).
\end{eqnarray*}
Thus $\Hom_{\TT}(\DD,N_M[\neq0])=0$ holds if and only if (ii) holds.

Applying $\Hom_{\TT}(-,\DD[\neq0])$ to \eqref{exchange}, we have an exact sequence
\begin{eqnarray*}
\Hom_{\TT}(D[1],\DD[\neq0])\xrightarrow{\cdot f[1]}
\Hom_{\TT}(M[1],\DD[\neq0])\to\Hom_{\TT}(N_M,\DD[\neq0])\to\Hom_{\TT}(D,\DD[\neq0])=0.
\end{eqnarray*}
Since the above $(\cdot f[1])$ is surjective, we have $\Hom_{\TT}(N_M,\DD[\neq0])=0$.


Applying $\Hom_{\TT}(M,-[\neq-1,0])$ to \eqref{exchange}, we have an exact sequence 
\[0=\homo{\TT}{M}{D[\neq-1,0]}\to \homo{\TT}{M}{N_M[\neq-1,0]}\to \homo{\TT}{M}{M[\neq0,1]}=0\]
and so $\homo{\TT}{M}{N_M[\neq-1,0]}=0$. 
Applying $\Hom_{\TT}(-,N_M[\neq0,1])$ to \eqref{exchange}, we have an exact sequence 
\[0=\homo{\TT}{M}{N_M[\neq-1,0]}\to \homo{\TT}{N_M}{N_M[\neq0,1]}\to \homo{\TT}{D}{N_M[\neq0,1]}=0\]
and so $\homo{\TT}{N_M}{N_M[\neq0,1]}=0$. 
Thus we have $\homo{\TT}{N_M}{N_M[\neq0]}=0$ by Theorem \ref{thm:mutation is silting}.

Consequently we have $\homo{\TT}{\mu^+(\MM;\DD)}{\mu^+(\MM;\DD)[\neq0]}=0$
if and only if (ii) holds. 
\end{proof}

The following properties can be checked easily.

\begin{proposition}\label{mutation basic}
Let $\MM\in\silt\TT$.
\begin{itemize}
\item[(a)] For a covariantly finite subcategory $\DD$ of $\MM$,
we have $\mu^-(\mu^+(\MM;\DD);\DD)=\MM$ and $\MM\ge\mu^+(\MM;\DD)$, where the equality holds if and only if $\DD=\MM$.
\item[(b)] For a contravariantly finite subcategory $\DD$ of $\MM$,
we have $\mu^+(\mu^-(\MM;\DD);\DD)=\MM$ and $\mu^-(\MM;\DD)\ge\MM$, where the equality holds if and only if $\DD=\MM$.
\end{itemize}
\end{proposition}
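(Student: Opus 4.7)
The plan is to unwind the exchange triangle \eqref{exchange} directly; once the approximation property is transferred across that triangle, everything else is formal. I will handle part (a) carefully and invoke duality for (b).

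First I would establish $\mu^-(\mu^+(\MM;\DD);\DD)=\MM$. The key observation is that for each $M\in\MM$, if $M\xrightarrow{f}D\xrightarrow{g}N_M\to M[1]$ is the defining triangle of $\mu^+$, then $g$ is already a right $\DD$-approximation of $N_M$: applying $\Hom_{\TT}(D',-)$ for $D'\in\DD$ places the only obstruction in $\Hom_{\TT}(D',M[1])\subset\Hom_{\TT}(\MM,\MM[1])=0$. Hence $\DD$ is contravariantly finite in $\mu^+(\MM;\DD)$, and the very same triangle, reread as the right-mutation exchange triangle at $N_M$, yields $M$ as the new object. Objects of $\DD$ themselves contribute $0$ via the identity approximation. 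Summing up, $\mu^-(\mu^+(\MM;\DD);\DD)=\add(\DD\cup\MM)=\MM$.

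Next I would verify $\MM\ge\mu^+(\MM;\DD)$. Since $\DD\subset\MM$, $\Hom_{\TT}(\MM,\DD[>0])=0$, and applying $\Hom_{\TT}(\MM,-[i])$ with $i>0$ to \eqref{exchange} sandwiches $\Hom_{\TT}(\MM,N_M[i])$ between the vanishing groups $\Hom_{\TT}(\MM,D[i])$ and $\Hom_{\TT}(\MM,M[i+1])$. For the equality criterion: if $\DD=\MM$, then $\mathrm{id}_M$ is a left $\DD$-approximation of every $M\in\MM$, forcing $N_M=0$ and $\mu^+(\MM;\MM)=\MM$. Conversely, if $\mu^+(\MM;\DD)=\MM$ then each $N_M$ lies in $\MM$, so the connecting morphism $N_M\to M[1]$ in \eqref{exchange} lies in $\Hom_{\TT}(\MM,\MM[1])=0$; the triangle therefore splits, exhibiting $M$ as a direct summand of $D\in\DD$. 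Since $\DD=\add\DD$ is summand-closed, $M\in\DD$, whence $\DD=\MM$.

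Part (b) follows from (a) by formal duality, exchanging left and right throughout. The one step demanding genuine care is the initial approximation transfer showing that $g$ serves as a right $\DD$-approximation of $N_M$; after that, the remainder is a direct consequence of the silting vanishing $\Hom_{\TT}(\MM,\MM[>0])=0$ and elementary manipulations of triangles.
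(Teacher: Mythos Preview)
Your proof is correct and follows essentially the same route as the paper's: both show $g$ is a right $\DD$-approximation via $\Hom_{\TT}(\DD,M[1])=0$, both obtain $\MM\ge\mu^+(\MM;\DD)$ by sandwiching $\Hom_{\TT}(\MM,N_M[>0])$ between two vanishing groups, and both derive the equality criterion from the fact that $f$ splits if and only if the connecting map $N_M\to M[1]$ vanishes. The only cosmetic difference is that the paper argues the equality criterion by contrapositive (if $M\in\MM\setminus\DD$ then $f$ is not split mono, so $N_M\notin\MM$), whereas you argue directly (if every $N_M\in\MM$ then every $f$ splits, so $M\in\DD$); these are the same argument.
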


\begin{proof}
(a) Applying $\Hom_{\TT}(\DD,-)$ to \eqref{exchange}, we have an exact sequence
\[\Hom_{\TT}(\DD,D)\stackrel{g\cdot}{\to}\Hom_{\TT}(\DD,N_M)\to\Hom_{\TT}(\DD,M[1])=0.\]
Thus $g$ is a right $\DD$-approximation.
Thus we have $\mu^-(\mu^+(\MM;\DD);\DD)=\MM$.

Applying $\Hom_{\TT}(\MM,-[>0])$ to \eqref{exchange}, we have an exact sequence
\[0=\Hom_{\TT}(\MM,D[>0])\to\Hom_{\TT}(\MM,N_M[>0])\to\Hom_{\TT}(\MM,M[>1])=0\]
and so $\Hom_{\TT}(\MM,N_M[>0])=0$.
Thus $\Hom_{\TT}(\MM,\mu^+(\MM;\DD)[>0])=0$ holds and we have $\MM\ge\mu^+(\MM;\DD)$.

If $\DD\neq\MM$, then take $M\in\MM\backslash\DD$.
Since $f$ is not a split monomorphism, we have $\Hom_{\TT}(N_M,M[1])\neq0$.
Thus $N_M\notin\MM$ holds, so we have $\MM\neq\mu^+(\MM;\DD)$.
\end{proof}

In the rest of this subsection, we assume that $\TT$ is a Krull-Schmidt triangulated category.
The following notation will be often used.

\begin{definition}
Let $\MM\in\silt\TT$ and $\XX$ a subcategory of $\MM$.
Define a subcategory $\MM_{\XX}$ of $\MM$ by $\ind\MM_{\XX}=\ind\MM\backslash\ind\XX$.
We put
\[\mu^{\pm}_{\XX}(\MM):=\mu^{\pm}(\MM;\MM_{\XX})\]
We say that mutation is \emph{irreducible} if $\#\ind\XX=1$.
If $\XX=\add X$, we denote $\MM_{\XX}$ and $\mu_{\XX}^\pm$ by $\MM_X$ and $\mu_X^\pm$ respectively.
\end{definition}

Now we assume the following condition:
\begin{itemize}
\item[(F)] $\TT$ is Krull-Schmidt, and for any $\MM\in\silt\TT$ and $X\in\ind\MM$, the subcategory $\MM_X$ is functorially finite in $\MM$.
\end{itemize}
This is satisfied if $\TT$ is $k$-linear $\Hom$-finite and has a silting object (Proposition \ref{existence of additive generator}).

Under this condition, we shall show the following result.

\begin{theorem}\label{two quivers}
Let $\TT$ be a triangulated category satisfying the condition (F).
For any $\MM,\NN\in\silt\TT$, the following conditions are equivalent.
\begin{itemize}
\item[(a)] $\NN$ is an irreducible left mutation of $\MM$.
\item[(b)] $\MM$ is an irreducible right mutation of $\NN$.
\item[(c)] $\MM>\NN$ and there is no $\LL\in\silt\TT$ satisfying $\MM>\LL>\NN$.
\end{itemize}
\end{theorem}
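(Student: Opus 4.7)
The plan is to decompose the proof into (a) $\Leftrightarrow$ (b), (a) $\Rightarrow$ (c), and (c) $\Rightarrow$ (a), with the last implication being the main difficulty. The first equivalence is essentially formal from Proposition \ref{mutation basic}: writing $\NN = \mu^+_X(\MM) = \add(\MM_X \cup \{N_X\})$ via the triangle $X \xrightarrow{f} D \to N_X \to X[1]$ with $f$ a left $\MM_X$-approximation, one has $\MM_X \subset \NN$ and $\mu^-(\NN;\MM_X) = \MM$ by Proposition \ref{mutation basic}(a). Corollary \ref{thm:number of indecomposable direct summands} forces $|\ind\NN \setminus \ind\MM_X| = |\ind\MM \setminus \ind\MM_X| = 1$, so $\MM_X = \NN_Y$ for a unique indecomposable $Y \in \ind\NN$, making this right mutation irreducible; the reverse direction is symmetric.

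For (a) $\Rightarrow$ (c), Proposition \ref{mutation basic}(a) yields $\MM > \NN$ (strictly, since $\MM_X \subsetneq \MM$). Assume for contradiction that some $\LL \in \silt\TT$ satisfies $\MM > \LL > \NN$. Proposition \ref{common summand} gives $\MM \cap \NN \subset \LL$; since $\MM \neq \NN$, Theorem \ref{no inclusion} forbids $X \in \NN$ (otherwise $\MM = \add(\MM_X \cup \{X\}) \subset \NN$ would force $\MM = \NN$), whence $\MM \cap \NN = \MM_X \subset \LL$. Corollary \ref{thm:number of indecomposable direct summands} then forces $\LL = \add(\MM_X \cup \{Y\})$ for a unique indecomposable $Y$. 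Applying $\Hom_\TT(\MM,-[>0])$ and $\Hom_\TT(-,\NN[>0])$ to the mutation triangle and using the vanishings $\Hom_\TT(\MM,\LL[>0]) = 0 = \Hom_\TT(\LL,\NN[>0])$, one pins down $Y$ as isomorphic either to $X$ or to an indecomposable summand of $N_X$, giving $\LL = \MM$ or $\LL = \NN$, a contradiction.

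The main work is (c) $\Rightarrow$ (a). Since $\MM \neq \NN$ and Theorem \ref{no inclusion} forbids $\MM \subsetneq \NN$, there exists an indecomposable $X \in \ind\MM \setminus \ind\NN$. The strategy is to form $\LL := \mu^+_X(\MM)$, which satisfies $\MM > \LL$ by Proposition \ref{mutation basic}(a), and then verify $\LL \ge \NN$; the minimality clause of (c) then forces $\LL = \NN$, establishing (a). Applying $\Hom_\TT(-,\NN[i])$ to the mutation triangle $X \to D \to N_X \to X[1]$ and using $\Hom_\TT(\MM,\NN[>0]) = 0$ from $\MM \ge \NN$, the inequality $\LL \ge \NN$ reduces to the single vanishing $\Hom_\TT(X,\NN) = 0$. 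The principal obstacle is to choose $X$ so that this vanishing holds; a generic element of $\ind\MM \setminus \ind\NN$ need not satisfy it. I would exploit the co-t-structure $({}^{\perp}(\TT^{<0}_\MM),\TT^{\le 0}_\MM)$ of Proposition \ref{generate3}(b) together with the strict inclusion $\TT^{\le 0}_\NN \subsetneq \TT^{\le 0}_\MM$ supplied by $\MM > \NN$ (Proposition \ref{another condition}): an object witnessing this strictness, when resolved by $\MM$ via Proposition \ref{resolution}, should point to an indecomposable summand $X$ lying at the ``first step'' between the two co-t-structures, for which $\Hom_\TT(X,\NN) = 0$ comes from a degree-counting argument in the resolution. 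If a direct choice resists, a contrapositive approach would assume that for every $X \in \ind\MM \setminus \ind\NN$ the mutation $\mu^+_X(\MM)$ fails to dominate $\NN$, and exhibit a strict intermediate silting subcategory by chaining mutations, contradicting (c).
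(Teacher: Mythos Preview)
Your reduction in (c) $\Rightarrow$ (a) is wrong, and this is the central gap. After applying $\Hom_{\TT}(-,\NN[i])$ to the mutation triangle $X\xrightarrow{f} D\to N_X\to X[1]$, the vanishing $\Hom_{\TT}(N_X,\NN[i])=0$ for $i\ge 2$ is indeed automatic from $\MM\ge\NN$, but for $i=1$ the long exact sequence gives
\[
\Hom_{\TT}(D,\NN)\xrightarrow{\ \cdot f\ }\Hom_{\TT}(X,\NN)\to\Hom_{\TT}(N_X,\NN[1])\to\Hom_{\TT}(D,\NN[1])=0,
\]
so what is actually needed is \emph{surjectivity of $(\cdot f)$}, not $\Hom_{\TT}(X,\NN)=0$. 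Your stronger vanishing cannot be arranged in general: already for the $A_2$ quiver with $\MM=\add(P_1\oplus P_2)$ and $\NN=\mu^+_{P_2}(\MM)=\add(P_1\oplus S_1)$, the only candidate is $X=P_2$, and $\Hom_{\TT}(P_2,P_1)\neq 0$. Thus your strategy of searching for $X$ with $\Hom_{\TT}(X,\NN)=0$ is doomed, and the ``degree-counting'' heuristic is aimed at the wrong target.

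The paper proves exactly this surjectivity and packages it as a separate key step (Proposition~\ref{make closer}): pick $N_0\in\NN\setminus\MM$, take the minimal $\MM$-resolution of $N_0$ from Proposition~\ref{resolution}, and let $X$ be an indecomposable summand of the \emph{last} term $M_\ell$. Surjectivity of $(\cdot f)$ then follows from Lemma~\ref{no common summand}, which shows $(\add M_\ell)\cap(\add M_0')=0$ for the first term $M_0'$ of the $\MM$-resolution of any $N_0'\in\NN$; hence $M_0'\in\MM_X$, and any $a:X\to N_0'$ lifts through $f$ via the right $\MM$-approximation $M_0'\to N_0'$. You have the right instinct about resolving an object in $\TT^{\le0}_\MM\setminus\TT^{\le0}_\NN$, but you take something in $\NN\setminus\MM$, look at the \emph{top} of its $\MM$-resolution, and prove a factorisation, not a vanishing.

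Your argument for (a) $\Rightarrow$ (c) also has a hole: the claim that the extra indecomposable $Y$ of $\LL$ must equal $X$ or be a summand of $N_X$ is essentially the statement that all silting completions of $\MM_X$ are mutations, which is not a formal consequence of the Hom-vanishings you list. The paper instead reuses Proposition~\ref{make closer}: from $\MM>\LL$ one gets an irreducible left mutation $\KK=\mu^+_W(\MM)$ with $\MM>\KK\ge\LL>\NN$; then $\MM_X=\MM\cap\NN\subset\KK$ by Proposition~\ref{common summand}, which forces $W=X$ (otherwise $W\in\MM_X\subset\KK$ yet $W\notin\mu^+_W(\MM)$), hence $\KK=\NN$, contradicting $\KK\ge\LL>\NN$. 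Your (a) $\Leftrightarrow$ (b) via the Grothendieck-group count is fine and only cosmetically different from the paper's minimality argument.
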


The following property is crucial in our consideration.

\begin{proposition}\label{make closer}
If $\MM,\NN\in\silt\TT$ satisfy $\MM>\NN$,
then there exists an irreducible left mutation $\LL$ of $\MM$ such that $\MM>\LL\ge\NN$.
\end{proposition}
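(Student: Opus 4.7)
The plan is to pick an indecomposable summand $X$ of $\MM$ whose irreducible left mutation $\LL := \mu_X^+(\MM)$ satisfies $\MM > \LL \ge \NN$; the existence of such $X$ is engineered via Lemma \ref{no common summand}.

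Since $\MM > \NN$ and hence $\MM \ne \NN$, Theorem \ref{no inclusion} gives $\NN \not\subset \MM$, so I would fix $N \in \ind\NN$ with $N \notin \MM$. Using $\MM \ge \NN$, we have $N \in \TT_\MM^{\le 0}$, so Proposition \ref{resolution} furnishes a resolution
\[N_{i+1} \xrightarrow{g_{i+1}} M_i \xrightarrow{f_i} N_i \to N_{i+1}[1] \qquad (0 \le i \le \ell)\]
with $N_0 = N$, $N_{\ell+1} = 0$, and each $f_i$ a minimal right $\MM$-approximation; choose $\ell$ minimal, so that $\ell \ge 1$ (as $N \notin \MM$) and $M_\ell \ne 0$. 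Pick $X$ to be any indecomposable summand of $M_\ell$ and set $\LL := \mu_X^+(\MM)$ (well defined under condition (F)). To see $X \notin \ind\NN$: otherwise the trivial $\MM$-resolution of $X$ has $M_0' = X$, and Lemma \ref{no common summand}, whose hypothesis $\Hom_\TT(N, X[\ell]) = 0$ follows since $N, X \in \NN$ and $\ell \ge 1$, would force $(\add M_\ell) \cap (\add X) = 0$, contradicting $X \mid M_\ell$. Proposition \ref{mutation basic}(a) then yields $\MM > \LL$.

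The crux is to verify $\LL \ge \NN$, that is, $\Hom_\TT(N_X, N'[>0]) = 0$ for every $N' \in \NN$, where $N_X$ is the cone in the defining triangle $X \xrightarrow{f} D \to N_X \to X[1]$ of $\mu_X^+(\MM)$. For $i \ge 2$, the terms $\Hom_\TT(X[1], N'[i]) = \Hom_\TT(X, N'[i-1])$ and $\Hom_\TT(D, N'[i])$ both vanish by $\MM \ge \NN$, so the long exact sequence gives $\Hom_\TT(N_X, N'[i]) = 0$; the problem thus reduces to the surjectivity of $f^* \colon \Hom_\TT(D, N') \to \Hom_\TT(X, N')$. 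To achieve this, resolve $N'$ as $N_1' \to M_0' \to N' \to N_1'[1]$ by Proposition \ref{resolution}. A second application of Lemma \ref{no common summand}, this time with $N_0 = N$ and $N_0' = N'$ and again using $\Hom_\TT(N, N'[\ell]) = 0$, forces $X$ not to be a summand of $M_0'$, i.e.\ $M_0' \in \add \MM_X$. Since $N_1'[1]$ lies in positive shifts of $\MM$ and $\MM$ is silting, $\Hom_\TT(X, N_1'[1]) = 0$, so every $h \colon X \to N'$ factors through $M_0'$, and hence through $f$ by the left $\MM_X$-approximation property. This delivers the required surjectivity.

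The main obstacle is the dual role played by Lemma \ref{no common summand}: it is used both to exclude the chosen $X$ from $\NN$ and to produce the factorizations of morphisms $X \to N'$ through $\MM_X$, and in each case one must verify the hypothesis by matching the length $\ell$ of the resolution of $N$ with the vanishing of $\Hom_\TT(N, -[\ell])$ on $\NN$. Once $X$ is correctly chosen, the remainder is routine long-exact-sequence bookkeeping combining the silting property of $\MM$ with $\MM \ge \NN$.
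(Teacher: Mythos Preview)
Your proof is correct and follows essentially the same approach as the paper's: choose $N\in\NN\setminus\MM$, take its minimal $\MM$-resolution from Proposition~\ref{resolution}, let $X$ be an indecomposable summand of the top term $M_\ell$, and use Lemma~\ref{no common summand} to force $M_0'\in\MM_X$ so that every map $X\to N'$ factors through the left $\MM_X$-approximation $f$. One small remark: the intermediate step showing $X\notin\ind\NN$ is not needed for $\MM>\LL$, since Proposition~\ref{mutation basic}(a) gives strict inequality as soon as $\MM_X\neq\MM$, which is automatic from $X\in\ind\MM$; the paper omits this step entirely.
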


\begin{proof}
We take $N_0\in\NN$ which does not belong to $\MM$,
then consider $\ell\ge0$ and triangles in Proposition \ref{resolution}.
Then we have $\ell>0$. Take an indecomposable summand $X$ of $M_\ell$,
and let $\LL:=\mu_X^+(\MM)$ be an irreducible left mutation.
Then we have $\LL=\add(\MM_X\cup\{Y\})$, where $Y$ is given by the triangle
\[X\xrightarrow{f}D\xrightarrow{g}Y\to X[1]\]
with a left $\MM_X$-approximation $f$ of $X$.

We only have to prove $\LL\ge\NN$. We need to show $\Hom_{\TT}(Y,\NN[>0])=0$.
Since we have an exact sequence
\[0=\Hom_{\TT}(X,\NN[>0])\to\Hom_{\TT}(Y,\NN[>1])\to\Hom_{\TT}(D,\NN[>1])=0,\]
we have $\Hom_{\TT}(Y,\NN[>1])=0$.
Thus it remains to show $\Hom_{\TT}(Y,\NN[1])=0$.
Since we have an exact sequence
\[\Hom_{\TT}(D,\NN)\xrightarrow{\cdot f}\Hom_{\TT}(X,\NN)\to\Hom_{\TT}(Y,\NN[1])\to
\Hom_{\TT}(D,\NN[1])=0,\]
we only have to show that $\Hom_{\TT}(D,\NN)\xrightarrow{\cdot f}\Hom_{\TT}(X,\NN)$ is surjective.

Fix any $N_0'\in\NN$ and $a:X\to N_0'$. 
We consider the diagram
\[\xymatrix@R=.4cm{
&Y[-1] \ar[r]& X \ar[r]^{f} \ar[d]^{a} & D\ar[r]^g&Y\\
N_1'\ar[r]^{g_0'}&M_0' \ar[r]^{f_0'} & N_0' \ar[r] &  N_1'[1].
}\]
where the lower triangle is given in Lemma \ref{no common summand}.
Since $f_0'$ is a right $\MM$-approximation, there exists $b:X\to M_0'$ such that $a=f_0'b$.
Since $(\add M_\ell)\cap(\add M_0')=0$ holds by Lemma \ref{no common summand},
we have $X\notin\add M_0'$ and so $M_0'\in\MM_X$.
Since $f$ is a left $\MM_X$-approximation, there exists $c:D\to M_0'$
such that $b=cf$. Thus we have $a=(f_0'c)f$, and we have the assertion.
\end{proof}

Now we are ready to prove Theorem \ref{two quivers}.

(i) We shall show that (a) and (b) are equivalent.

We only have to show that (a) implies (b).
Assume that $\NN$ is an irreducible left mutation $\mu_X^+(\MM)$ of $\MM$
with respect to $X\in\ind\MM$. Take a triangle
\[X\xrightarrow{f}D\xrightarrow{g}Y\to X[1]\]
with a minimal left $\MM_X$-approximation $f$. By Proposition \ref{mutation basic},
we have $\MM=\mu_Y^-(\NN)$. Since $g$ is a right $\MM_X$-approximation, it is easy to check that $Y$ is indecomposable.
Thus $\MM$ is an irreducible right mutation of $\NN$.

(ii) We shall show that (a) and (c) are equivalent.

Assume that (c) is satisfied.
Since $\MM>\NN$, there exists an irreducible left mutation $\LL$ of $\MM$ such that $\MM>\LL\ge\NN$
by Proposition \ref{make closer}. By the condition (c), we have $\LL=\NN$,
and we have the assertion.

Assume that (a) is satisfied. Thus $\NN=\mu_X^+(\MM)$ for indecomposable $X$.
Assume that there exists $\LL$ such that $\MM>\LL>\NN$.
Again by Proposition \ref{make closer}, there exists a left mutation
$\KK$ of $\MM$ such that $\MM>\KK\ge\LL$.
By Proposition \ref{common summand}, we have $\MM_X=\MM\cap\NN\subset\KK$.
Since both of $\NN$ and $\KK$ are left mutation of $\MM$ with respect to $X$,
we have $\NN=\KK$, a contradiction. Thus (c) holds.
\qed

\subsection{Silting reduction}
In this subsection, we give a reduction theorem of silting subcategories,
which is an analogue of 2-Calabi-Yau reduction in cluster tilting theory \cite[4.9]{IY}.
The following result gives a bijection between silting subcategories of $\TT$
containing a functorially finite thick subcategory $\SSS$ and silting subcategories of the quotient triangulated category $\TT/\SSS$.

\begin{theorem}\label{silting reduction}
Let $\TT$ be a Krull-Schmidt triangulated category, $\SSS$ a thick subcategory of $\TT$
and $\UU:=\TT/\SSS$. Let $F:\TT\to\UU$ be the canonical functor.
\begin{itemize}
\item[(a)] If $\SSS$ is a contravariantly finite subcategory of $\TT$, then for any
$\DD\in\silt\SSS$ we have an injective map
\[\{\MM\in\silt\TT\ |\ \DD\subset\MM\}\to\silt\UU,\ \ \ \MM\mapsto F\MM.\]
\item[(b)] If $\SSS$ is a functorially finite subcategory of $\TT$, then the map in (a) is bijective.
\end{itemize}
\end{theorem}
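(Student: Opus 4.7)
The plan is first to verify that $F\MM$ is silting in $\UU$ whenever $\DD\subset\MM\in\silt\TT$, then to prove injectivity in (a) by recovering indecomposables of $\MM$ from their images, and finally in (b) to construct a preimage of any $\NN\in\silt\UU$ by carefully lifting its indecomposables through $F$.

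For the well-definedness, $\thick(F\MM)=\UU$ is immediate from $\thick\MM=\TT$ and essential surjectivity of the triangle functor $F$. The non-trivial content is $\Hom_{\UU}(F\MM,F\MM[>0])=0$. A morphism $FM\to FM'[i]$ in the Verdier quotient is represented by a roof $M\xleftarrow{s}Z\xrightarrow{\alpha}M'[i]$ with $\operatorname{cone}(s)\in\SSS$. Since $\DD$ is silting in $\SSS$, Proposition~\ref{generate2} applied inside $\SSS$ expresses this cone as a summand of an iterated extension of shifts of $\DD$; combined with the vanishings $\Hom_\TT(\MM,\DD[>0])=0=\Hom_\TT(\DD,\MM[>0])$ coming from $\DD\subset\MM$, this allows me to refine the roof (using further right $\SSS$-approximations, available by contravariant finiteness of $\SSS$) until $\alpha$ factors through an object of $\SSS$, so that the roof becomes zero in $\UU$.

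For injectivity, suppose $F\MM=F\MM'$ with $\DD\subset\MM,\MM'$. By Krull--Schmidt it suffices to match indecomposables: for $M\in\MM$ indecomposable one has $FM\simeq FM''$ for some indecomposable $M''\in\MM'$. An isomorphism in $\UU$ is realized by morphisms in $\TT$ whose cones lie in $\SSS=\thick\DD$, and using the two-sided orthogonality in positive degrees between $\DD$ and each of $\MM,\MM'$, I would argue that such a cone must in fact be zero, forcing $M\simeq M''$ in $\TT$. This gives $\MM\subset\MM'$, and Theorem~\ref{no inclusion} then yields $\MM=\MM'$.

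For surjectivity in (b), given $\NN\in\silt\UU$ I would choose for each indecomposable $N\in\NN$ some $\tilde N\in\TT$ with $F\tilde N\simeq N$, and then adjust $\tilde N$ through successive $\DD$-approximations, using both left and right $\SSS$-approximations (available by functorial finiteness) and the co-t-structure $({}^\perp(\SSS^{<0}_{\DD}),\SSS^{\le0}_{\DD})$ on $\SSS$ supplied by Proposition~\ref{generate3}(b), until $\Hom_\TT(\DD,\tilde N[>0])=0=\Hom_\TT(\tilde N,\DD[>0])$. Setting $\MM:=\add(\DD\cup\{\tilde N\mid N\in\ind\NN\})$, the equality $\thick\MM=\TT$ reduces to $\thick(F\MM)=\UU$ together with $\thick\DD=\SSS$, and $\Hom_\TT(\tilde N,\tilde N'[>0])=0$ follows by transporting $\Hom_\UU(N,N'[>0])=0$ back through the roof representation, using the $\DD$-orthogonalities just arranged to annihilate any $\SSS$-contribution. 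The main obstacle will be the roof manipulation in the well-definedness step: carefully pushing the $\SSS$-part of each roof through the co-t-structure given by $\DD$ while preserving the positive-degree vanishings between $\MM$ and $\DD$. Once this is in place, the remaining parts follow cleanly from Krull--Schmidt, Theorem~\ref{no inclusion}, and the structural results already established for silting subcategories.
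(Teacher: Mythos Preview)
Your plan misses the structural device that makes the paper's proof work, and your injectivity step contains a genuine error.

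\textbf{The missing identification.} The paper's first move is to use contravariant finiteness of $\SSS$ to get a stable t-structure $(\SSS,\SSS^{\perp_\TT})$ and then identify $\UU$ with the subcategory $\SSS^{\perp_\TT}$ of $\TT$. After this, $F$ is simply the functor sending $X$ to the $\SSS^\perp$-component of the triangle $S\to X\to FX\to S[1]$, and all Hom-computations take place in $\TT$ with no roofs. Your roof-refinement sketch (``refine the roof until $\alpha$ factors through $\SSS$'') is vague and, without the identification, hard to control. The paper then proves a short but essential lemma: for $M\in\MM$, the object $S$ in $S\to M\to FM\to S[1]$ lies in $\SSS^{\le0}_\DD$. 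This is what makes $\Hom_\TT(\MM,S[>0])=0$ and drives every subsequent exact-sequence argument. You never isolate this fact.

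\textbf{Injectivity is wrong as stated.} You claim that if $FM\simeq FM''$ in $\UU$ then the cone of a representing morphism in $\TT$ must be zero, using positive-degree orthogonality between $\DD$ and $\MM,\MM'$. This is false: the orthogonality is only in positive degrees, so degree-zero maps survive and cones in $\SSS$ need not vanish. (Already $M\in\DD$ gives $FM=0$ with cone $M\ne 0$.) The paper avoids indecomposable-matching entirely: it shows that $F\MM\ge F\NN$ implies $\MM\ge\NN$ by lifting any $f:M\to N[>0]$ through the triangles $S_M\to M\to FM$ and $S_N\to N\to FN$, using $S_N\in\SSS^{\le0}_\DD$ to kill the relevant factorization; antisymmetry of the partial order (Theorem~\ref{partial order}) then gives injectivity.

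\textbf{Surjectivity.} You are on the right track invoking the co-t-structure on $\SSS$ from Proposition~\ref{generate3}(b), but the paper does something sharper: it uses covariant finiteness of $\SSS$ in $\TT$ to promote $\SSS^{<0}_\DD$ to a covariantly finite subcategory of $\TT$, obtaining a torsion pair $({}^{\perp_\TT}(\SSS^{<0}_\DD),\SSS^{<0}_\DD)$ on all of $\TT$. Then for each $N\in\NN$ a single truncation triangle $S_N\to M_N\to N\to S_N[1]$ produces the lift $M_N$, with $\MM:=\add(\DD\cup\{M_N\})$. Your ``successive $\DD$-approximations until both vanishings hold'' is not clearly a terminating or well-defined procedure, and you do not explain why the two orthogonality conditions can be achieved simultaneously.
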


The first step of the proof is to consider the subcategory $\SSS^{\perp_{\TT}}$ of $\TT$.
Since $\SSS$ is contravariantly finite, we have a stable t-structure $(\SSS,\SSS^{\perp_{\TT}})$ of $\TT$
by Lemma \ref{direct summand}.
Moreover, we can naturally identify $\UU$ with the subcategory $\SSS^{\perp_{\TT}}$ of $\TT$ \cite{M}.

Next we shall show the following observation.

\begin{lemma}\label{S in S^le0}
Let $\MM$ be as in Theorem \ref{silting reduction}(a).
For any $M\in\MM$, take a triangle
\begin{equation}\label{split of M}
S\stackrel{a}{\to}M\stackrel{b}{\to}FM\to S[1]
\end{equation}
with $S\in\SSS$ and $FM\in\UU$. 
Then $S\in \SSS^{\leq 0}_{\DD}$ holds, where
\[({}^{\perp_{\SSS}}(\SSS^{\leq 0}_{\DD}),\SSS^{\leq0}_{\DD})
=(\bigcup_{\ell>0}\DD[-\ell]*\cdots*\DD[-1],\bigcup_{\ell\ge0}\DD*\cdots*\DD[\ell])\]
is the torsion pair in $\SSS=\thick\DD$ given in Proposition \ref{generate3}.
\end{lemma}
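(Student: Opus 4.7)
The plan is to apply the cohomological functor $\Hom_\TT(\DD,-)$ to the triangle \eqref{split of M} and verify the defining vanishing condition of $\SSS^{\leq 0}_\DD$ directly. Recall that by the definition preceding Proposition \ref{generate3} (applied to the triangulated category $\SSS$ with silting subcategory $\DD$), we have
\[\SSS^{\leq 0}_\DD=\{X\in\SSS\ |\ \Hom_\SSS(\DD,X[>0])=0\}=\{X\in\SSS\ |\ \Hom_\TT(\DD,X[>0])=0\}.\]
Since $S\in\SSS$ by hypothesis, it suffices to show $\Hom_\TT(\DD,S[i])=0$ for all $i>0$.

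Shift \eqref{split of M} by $i>0$ and apply $\Hom_\TT(\DD,-)$ to obtain the exact sequence
\[\Hom_\TT(\DD,FM[i-1])\to\Hom_\TT(\DD,S[i])\to\Hom_\TT(\DD,M[i]).\]
The right-hand term vanishes: since $\DD\subset\MM$ and $\MM$ is silting in $\TT$, we have $\Hom_\TT(\DD,M[i])\subset\Hom_\TT(\MM,\MM[>0])=0$. For the left-hand term, note that $FM\in\UU$ corresponds under the identification $\UU\simeq\SSS^{\perp_\TT}$ to an object satisfying $\Hom_\TT(\SSS,FM)=0$; since $\SSS$ is a thick subcategory we have $\SSS[j]=\SSS$ for every $j\in\Z$, and hence
\[\Hom_\TT(\DD,FM[j])=\Hom_\TT(\DD[-j],FM)\subset\Hom_\TT(\SSS,FM)=0\]
for all $j\in\Z$. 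Combining these two vanishings forces $\Hom_\TT(\DD,S[i])=0$ for every $i>0$, so $S\in\SSS^{\leq 0}_\DD$.

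The only point requiring attention is the identification used in the left-hand vanishing: one must observe that thickness of $\SSS$ makes $\SSS^{\perp_\TT}$ shift-invariant, so that the hypothesis $\Hom_\TT(\SSS,FM)=0$ upgrades to vanishing in all degrees; this is what allows the long exact sequence argument to go through cleanly. Once this is recorded, the proof is a one-line diagram chase, and no choice of resolution is required here.
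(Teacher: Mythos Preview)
Your proof is correct and is in fact more direct than the paper's argument. You verify the defining vanishing $\Hom_\TT(\DD,S[>0])=0$ by a single application of the long exact sequence to the triangle \eqref{split of M}, using only that $\DD\subset\MM$ is silting and that $FM\in\SSS^{\perp_\TT}$ with $\SSS$ shift-closed.

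The paper instead decomposes $S$ via the torsion pair $({}^{\perp_\SSS}(\SSS^{\le0}_\DD),\SSS^{\le0}_\DD)$ of Proposition \ref{generate3} into a triangle $S^{>0}\to S\to S^{\le0}\to S^{>0}[1]$, and then shows by a diagram chase (using $\Hom_\TT({}^{\perp_\SSS}(\SSS^{\le0}_\DD),\MM)=0$ and $\Hom_\TT(\SSS,\UU)=0$) that the map $S\to S^{\le0}$ is a split monomorphism. This route exercises the co-t-structure machinery of Proposition \ref{generate3}, whereas your approach bypasses it entirely. Both arguments ultimately rest on the same two vanishing facts, but yours extracts the conclusion with less overhead.
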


\begin{proof}
Take a triangle
\[S^{>0}\stackrel{c}{\to}S\stackrel{d}{\to}S^{\le0}\to S^{>0}[1]\]
with $S^{>0}\in {}^{\perp_{\SSS}}(\SSS^{\leq 0}_{\DD})$ and $S^{\le0}\in\SSS^{\leq 0}_{\DD}$.
Since $\Hom_{\TT}({}^{\perp_{\SSS}}(\SSS^{\leq 0}_{\DD}),\MM)=0$, we have $ac=0$.
Thus there exists a morphism $e:S^{\le0}\to M$ such that $a=ed$.
Since $be=0$ by $\Hom_{\DD}(\SSS,\UU)=0$, there exists $f:S^{\le0}\to S$ such that $e=af$.
\[\xymatrix@R=.4cm{
&S^{>0}\ar[d]^c\\
FM[-1]\ar[r]&S\ar[r]^a\ar@<.2em>[d]^d&M\ar[r]^b&FM\\
&S^{\le0}\ar@<.2em>[u]^f\ar[ru]_e
}\]
Since $a(1_S-fd)=0$, we have that $1_S-fd:S\to S$ factors through $FM[-1]$.
Since $\Hom_{\TT}(\SSS,\UU)=0$ again, we have $1_S=fd$.
Thus $d$ is a split monomorphism, and we have $S\in \SSS^{\leq 0}_{\DD}$.
\end{proof}

Now we are ready to prove Theorem \ref{silting reduction}.

(a)(i) We shall show that $F\MM$ is a silting subcategory of $\UU$.

We have $\thick F\MM\supset F\thick\MM=F\TT=\UU$.
Thus we only have to show $\Hom_{\TT}(F\MM,F\MM[>0])=0$.

Applying $\Hom_{\TT}(M,-[>0])$ to \eqref{split of M}, we have an exact sequence
\[0\stackrel{\MM\in\silt\TT}{=}\Hom_{\TT}(M,M[>0])\to\Hom_{\TT}(M,FM[>0])\to\Hom_{\TT}(M,S[>1])\stackrel{\Hom_{\TT}(\MM,\SSS^{<0}_{\DD})=0}{=}0\]
and so $\Hom_{\TT}(M,FM[>0])=0$.
Applying $\Hom_{\TT}(-,FM[>0])$ to \eqref{split of M}, we have an exact sequence
\[0\stackrel{\Hom_{\TT}(\SSS,\UU)=0}{=}\Hom_{\TT}(S,FM[>-1])\to\Hom_{\TT}(FM,FM[>0])\to\Hom_{\TT}(M,FM[>0])=0\]
and so $\Hom_{\TT}(FM,FM[>0])=0$.

(ii) We shall show that the correspondence $\MM\mapsto F\MM$ is injective. 
By Theorem \ref{partial order}, it is enough to prove $\MM\geq \NN$ if $F\MM\geq F\NN$. 
Let $M\in \MM$ and $N\in \NN$. 
For any $f\in \homo{\TT}{M}{N[>0]}$, we have a commutative diagram 
\[\xymatrix@R=0.4cm{
S_M \ar[r]\ar[d] & M \ar[r]\ar[d]^{f} & FM \ar[r]\ar[d] & S_M[1] \ar[d] \\
S_N[>0] \ar[r]_{a_N[>0]} & N[>0] \ar[r]_{b_N[>0]} & FN[>0] \ar[r] & S_N[>1].
}\]
Since $b_N[>0]\cdot f=0$ by our assumption, $f$ factors through $a_N[>0]: S_N[>0]\to N[>0]$. 
This implies $f=0$ since $\homo{\TT}{M}{S_N[>0]}\subset \homo{\TT}{\MM}{\SSS^{<0}_{\DD}}=0$ by Lemma \ref{S in S^le0}. 
Thus we have $\MM\geq \NN$. 

(b) We shall show that the correspondence $\MM\mapsto F\MM$ is surjective.
Fix $\NN\in\silt\UU$.

Since $\SSS$ is covariantly finite in $\TT$ by our assumption and
$\SSS^{<0}_{\DD}$ is covariantly finite in $\SSS$ by Proposition \ref{generate3}, 
we have that $\SSS^{<0}_{\DD}$ is covariantly finite in $\TT$.
Thus we have a torsion pair $({}^{\perp_{\TT}}(\SSS^{<0}_{\DD}), \SSS^{<0}_{\DD})$ in $\TT$ by Lemma \ref{direct summand}(c).
For any $N\in\NN$, take a triangle
\begin{equation}\label{split of N}
S_N\to M_N\to N\to S_N[1]
\end{equation}
with $M_N\in {}^{\perp_{\TT}}(\SSS^{<0}_{\DD})$ and $S_N\in \SSS^{\leq 0}_{\DD}$.
Notice that $M_N$ is unique up to a summand in $\DD$
since $\SSS^{\leq 0}_{\DD}\cap{}^{\perp_{\TT}}(\SSS^{<0}_{\DD})=\SSS^{\leq 0}_{\DD}\cap{}^{\perp_{\SSS}}(\SSS^{<0}_{\DD})=\DD$ by Proposition \ref{recover}.

We shall show that $\MM:=\add(\DD\cup\{M_N\ |\ N\in\NN\})$ is a silting subcategory of $\TT$.
Since $\DD[>0]\subset\SSS^{<0}_{\DD}$, we have $\Hom_{\TT}(M_N,\DD[>0])=0$.
Applying $\Hom_{\TT}(\DD,-[>0])$ to \eqref{split of N}, we have an exact sequence
\[0\stackrel{\Hom_{\TT}(\DD,\SSS^{<0}_{\DD})=0}{=}\Hom_{\TT}(\DD,S_N[>0])\to\Hom_{\TT}(\DD,M_N[>0])\to\Hom_{\TT}(\DD,N[>0])
\stackrel{\Hom_{\TT}(\SSS,\UU)=0}{=}0\]
and so $\Hom_{\TT}(\DD,M_N[>0])=0$.

Applying $\Hom_{\TT}(-,N[>0])$ to \eqref{split of N}, we have an exact sequence
\[0\stackrel{\NN\in\silt\UU}{=}\Hom_{\TT}(N,N[>0])\to\Hom_{\TT}(M_N,N[>0])\to\Hom_{\TT}(S_N,N[>0])\stackrel{\Hom_{\TT}(\SSS,\UU)=0}{=}0\]
and so $\Hom_{\TT}(M_N,N[>0])=0$.
Applying $\Hom_{\TT}(M_N,-[>0])$ to \eqref{split of N}, we have an exact sequence
\[0\stackrel{\Hom_{\TT}({}^{\perp_{\TT}}(\SSS^{<0}_{\DD}),\SSS^{<0}_{\DD})=0}{=}
\Hom_{\TT}(M_N,S_N[>0])\to\Hom_{\TT}(M_N,M_N[>0])\to\Hom_{\TT}(M_N,N[>0])=0\]
and so $\Hom_{\TT}(M_N,M_N[>0])=0$.

Consequently we have $\Hom_{\TT}(\MM,\MM[>0])=0$.
By \eqref{split of N}, we have $\thick\MM\supset\NN$, so we have $\thick\MM\supset\thick(\DD\cup\NN)\supset\SSS*\UU=\TT$.
Thus $\MM$ is a silting subcategory of $\TT$.
%
%
\qed

\medskip
The functor $F:\MM\to F\MM$ has the following properties.

\begin{proposition}\label{M and FM}
In Theorem \ref{silting reduction}(a), the functor $F:\MM\to F\MM$ induces an equivalence
\[\MM/[\DD]\to F\MM,\]
where $[\DD]$ is the ideal of $\MM$ consisting of morphisms which factor through objects in $\DD$.
In particular, $F$ induces a bijection between $\ind\MM\backslash\ind\DD$ and $\ind F\MM$.
\end{proposition}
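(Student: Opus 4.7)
The plan is to show that the induced functor $\overline{F}\colon \MM/[\DD] \to F\MM$ is an equivalence by verifying essential surjectivity (which is immediate from the definition of $F\MM$) together with full faithfulness, and then to deduce the indecomposable bijection. First, $F$ factors through $\MM/[\DD]$ since $\DD\subset\SSS$ and $F$ annihilates $\SSS$. The single technical input I will use repeatedly is the vanishing $\Hom_\TT(\MM,\SSS^{<0}_{\DD})=0$: writing $\SSS^{<0}_\DD\subset\bigcup_{\ell\ge 1}\DD[1]*\cdots*\DD[\ell]$ via Proposition \ref{generate3} and using that $\MM$ is silting with $\DD\subset\MM$, this follows by induction on the length of the $*$-factorization.

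For fullness, fix $M,M'\in\MM$ and any $\phi\colon FM\to FM'$ in $\UU\simeq\SSS^{\perp_\TT}$. Applying $\Hom_\TT(M,-)$ to the triangle $S_{M'}\to M'\to FM'\to S_{M'}[1]$ from Lemma \ref{S in S^le0}, and using that $S_{M'}[1]\in\SSS^{<0}_\DD$, the tail term vanishes, so I can lift the composite $M\to FM\xrightarrow{\phi}FM'$ to some $\alpha\colon M\to M'$. A parallel application of $\Hom_\TT(-,FM')$ to $S_M\to M\to FM\to S_M[1]$ together with $\Hom_\TT(\SSS,\SSS^{\perp_\TT})=0$ shows that the restriction map $\Hom_\TT(FM,FM')\to\Hom_\TT(M,FM')$ is bijective, which together with the defining commuting square of the induced morphism forces $F\alpha=\phi$.

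For faithfulness, the main obstacle is to convert the condition \textquotedblleft$F\alpha=0$\textquotedblright{} into a factorization through $\DD$ (and not merely through $\SSS$). Given $F\alpha=0$, the same bijection as in the previous step shows that the composite $M\to M'\to FM'$ is zero, hence $\alpha=(S_{M'}\to M')\circ\delta$ for some $\delta\colon M\to S_{M'}$. Applying Proposition \ref{resolution} inside the triangulated category $\SSS=\thick\DD$ to $S_{M'}\in\SSS^{\le 0}_\DD$ produces a triangle $S_{M'}^{1}\to D_0\to S_{M'}\to S_{M'}^{1}[1]$ with $D_0\in\DD$, $S_{M'}^{1}\in\SSS^{\le 0}_\DD$, and $D_0\to S_{M'}$ a (minimal) right $\DD$-approximation. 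Since $S_{M'}^{1}[1]\in\SSS^{<0}_\DD$, the key vanishing gives $\Hom_\TT(M,S_{M'}^{1}[1])=0$, so $\delta$ lifts through $D_0\to S_{M'}$, and consequently $\alpha$ factors through $D_0\in\DD$, as required.

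Finally, the bijection $\ind\MM\setminus\ind\DD\leftrightarrow\ind F\MM$ is now formal: since $\TT$ is Krull-Schmidt, for any indecomposable $M\in\MM$ the endomorphism ring is local, so $1_M$ becomes zero in $\MM/[\DD]$ iff $M$ is a summand of an object in $\DD$, iff $M\in\DD$. Hence the indecomposables of $\MM/[\DD]$ are precisely the classes in $\ind\MM\setminus\ind\DD$, and the equivalence $\overline{F}$ transports them bijectively to $\ind F\MM$.
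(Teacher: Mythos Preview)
Your proof is correct and follows essentially the same strategy as the paper's: both establish the key vanishing $\Hom_\TT(\MM,\SSS^{<0}_\DD)=0$ via the description $\SSS^{\le0}_\DD=\bigcup_{\ell\ge0}\DD*\cdots*\DD[\ell]$, then use it together with $\Hom_\TT(\SSS,\SSS^{\perp_\TT})=0$ to lift morphisms for fullness and to factor through $\DD$ for faithfulness. The only cosmetic difference is that in the faithfulness step you invoke Proposition~\ref{resolution} to produce the triangle $S^1_{M'}\to D_0\to S_{M'}\to S^1_{M'}[1]$, whereas the paper reads this triangle directly off the $*$-factorization $S_{M'}\in\DD*(\DD[1]*\cdots*\DD[\ell])$; these are the same thing.
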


\begin{proof}
Since $F\DD=0$, we have the induced functor $F:\MM/[\DD]\to F\MM$.
We only have to show that this is fully faithful.
Let $M,M'\in\MM$. For each $f:M\to M'$, the morphism $Ff:FM\to FM'$ is given by the following commutative diagram.
\[\xymatrix@R=.4cm{
S\ar[r]\ar[d]&M\ar[r]\ar[d]^f&FM\ar[r]\ar[d]^{Ff}&S[1]\ar[d]\\
S'\ar[r]&M'\ar[r]&FM'\ar[r]&S'[1]
}\]

(i) We shall show that $F:\MM/[\DD]\to F\MM$ is faithful.

If $Ff=0$, then $f$ factors through $S'$. By Lemma \ref{S in S^le0}, we know
$S'\in\SSS^{\leq0}_{\DD}=\bigcup_{\ell\ge0}\DD*\cdots*\DD[\ell]$.
Since $\Hom_{\TT}(M,\DD[1]*\cdots*\DD[\ell])=0$, we have that $f$ factors through $\DD$.
Thus $f$ is zero in $\MM/[\DD]$.

(ii) We shall show that $F:\MM/[\DD]\to F\MM$ is full.

Fix $g\in\Hom_{\TT}(FM,FM')$. Since $\Hom_{\TT}(M,S'[1])=0$ again by Lemma \ref{S in S^le0}, there exists $f:M\to M'$ which makes the following diagram commutative.
\[\xymatrix@R=.4cm{
S\ar[r]\ar[d]&M\ar[r]\ar[d]^f&FM\ar[r]\ar[d]^{g}&S[1]\ar[d]\\
S'\ar[r]&M'\ar[r]&FM'\ar[r]&S'[1]
}\]
Then $f=Fg$ holds.
\end{proof}

Later we use the following compatibility of silting mutation and silting reduction.

\begin{lemma}\label{compatibility}
In Theorem \ref{silting reduction}(a), we have $F\mu^+_{\XX}(\MM)=\mu^+_{F\XX}(F\MM)$ for any covariantly finite subcategory $\XX$ of $\MM$ satisfying $\DD\cap\XX=0$.
\end{lemma}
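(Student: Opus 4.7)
Plan: I would apply $F$ to the exchange triangle defining $\mu^+_{\XX}(\MM)$ and show that the resulting triangle in $\UU$ realises the exchange triangle for $\mu^+_{F\XX}(F\MM)$. Concretely, for each $X\in\XX$ I would take a left $\MM_{\XX}$-approximation $f\colon X\to D_X$ and the triangle
\[X\xrightarrow{f}D_X\xrightarrow{g}N_X\to X[1]\]
in $\TT$, so that $\mu^+_{\XX}(\MM)=\add(\MM_{\XX}\cup\{N_X\mid X\in\XX\})$. Applying the triangulated functor $F$ yields a triangle $FX\to FD_X\to FN_X\to FX[1]$ in $\UU$ with $FD_X\in F\MM_{\XX}$.

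Next I would identify $(F\MM)_{F\XX}=F\MM_{\XX}$ as subcategories of $\UU$. By Proposition \ref{M and FM}, $F$ induces a bijection $\ind\MM\setminus\ind\DD\leftrightarrow\ind F\MM$. Since $\XX\cap\DD=0$, under this bijection $\ind F\XX$ corresponds to $\ind\XX$. Hence $\ind((F\MM)_{F\XX})=\ind F\MM\setminus\ind F\XX$ corresponds to $(\ind\MM\setminus\ind\DD)\setminus\ind\XX=\ind F\MM_{\XX}$, giving the equality of subcategories.

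The main step will be verifying that $Ff\colon FX\to FD_X$ is a left $F\MM_{\XX}$-approximation in $\UU$. Given any $h\colon FX\to FY$ with $Y\in\MM_{\XX}$, the fullness of $F\colon\MM/[\DD]\to F\MM$ (Proposition \ref{M and FM}) lets me lift $h$ to some $h'\colon X\to Y$ in $\TT$ with $Fh'=h$. Since $f$ is a left $\MM_{\XX}$-approximation and $Y\in\MM_{\XX}$, there exists $k\colon D_X\to Y$ with $h'=kf$, and applying $F$ gives $h=(Fk)(Ff)$, as desired. The same lifting argument shows that $F\XX$ is covariantly finite in $F\MM$, so $\mu^+_{F\XX}(F\MM)$ is well-defined. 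I expect the only subtle point to be that $D_X$ may have summands lying in $\DD$ (which are sent to $0$ by $F$), but this is harmlessly absorbed by the $\add$-closure in the definition of mutation.

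Finally, since $\mu^+$ is independent of the choice of left approximation, the triangle $FX\to FD_X\to FN_X\to FX[1]$ in $\UU$ computes the mutation, giving
\[\mu^+_{F\XX}(F\MM)=\add\bigl((F\MM)_{F\XX}\cup\{FN_X\mid X\in\XX\}\bigr)=\add\bigl(F\MM_{\XX}\cup\{FN_X\mid X\in\XX\}\bigr)=F\mu^+_{\XX}(\MM),\]
which is the required equality.
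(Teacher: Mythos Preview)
Your proof is correct, and your overall architecture---apply $F$ to the exchange triangle, identify $(F\MM)_{F\XX}=F(\MM_{\XX})$ via Proposition \ref{M and FM}, and verify that $Ff$ is a left $F(\MM_{\XX})$-approximation---matches the paper exactly. The difference lies in how the approximation property of $Ff$ is established.

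You prove it directly: given $h\colon FX\to FY$ with $Y\in\MM_{\XX}$, you use the fullness part of Proposition \ref{M and FM} to lift $h$ to $h'\colon X\to Y$, factor $h'$ through $f$ in $\TT$, and push back down. This is clean and uses nothing beyond Proposition \ref{M and FM}. The paper instead proves the stronger vanishing $\Hom_{\UU}(FN_X,F(\MM_{\XX})[1])=0$, which forces the approximation property via the long exact sequence of the triangle; this requires invoking Lemma \ref{S in S^le0} (to control the fibre $S$ of $M\to FM$) together with Theorem \ref{thm:mutation is silting} (to get $\Hom_{\TT}(N_X,M[1])=0$). Your route is more elementary; the paper's route yields a bit more (the Ext-vanishing), though that extra information is not used later.

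One small slip: where you write ``$F\XX$ is covariantly finite in $F\MM$'', you mean $(F\MM)_{F\XX}$ is covariantly finite in $F\MM$; it is the complement, not $F\XX$ itself, whose covariant finiteness is needed to define $\mu^+_{F\XX}(F\MM)$. Your argument does establish exactly this (you produce a left $(F\MM)_{F\XX}$-approximation of each $FX$), so only the phrasing needs correcting.
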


\begin{proof}
By Proposition \ref{M and FM}, we have $(F\MM)_{F\XX}=F(\MM_{\XX})$.
For any $X\in\XX$, take a triangle
\[X\stackrel{f}{\to} D\to Y\to X[1]\]
where $f$ is a left $\MM_{\XX}$-approximation. 
We have to prove that $Ff$ is a left $F(\MM_{\XX})$-approximation.
It is enough to show $\homo{\TT}{FY}{F(\MM_{\XX})[1]}=0$.  

For any $M\in \MM_{\XX}$, there exists a triangle 
\begin{equation}
S \to M \to FM \to S[1]
\end{equation}
with $S\in \SSS^{\leq 0}_{\DD}$ by Lemma \ref{S in S^le0}.
Applying $\homo{\TT}{Y}{-}$, we have an exact sequence 
\[
0 \stackrel{{\rm Thm.}\ \ref{thm:mutation is silting}}{=}  \homo{\TT}{Y}{M[1]}\to \homo{\TT}{Y}{FM[1]} \to 
\homo{\TT}{Y}{S[2]}\stackrel{S\in \SSS^{\leq 0}_{\DD}}{=}0,
\]
so we have $\homo{\TT}{Y}{FM[1]}=0$.
Thus we have $\homo{\TT}{FY}{FM[1]}\simeq \homo{\TT}{Y}{FM[1]}=0$. 
\end{proof}


\subsection{Silting quivers and examples}

Let $\TT$ be a Krull-Schmidt triangulated category.
The aim of this section is to introduce the silting quiver of $\TT$.

\begin{definition}
The \emph{silting quiver} of $\TT$ is defined as follows:
\begin{itemize}
\item The set of vertices is $\silt\TT$.
\item We draw an arrow $\MM\to\NN$ if $\NN$ is an irreducible left mutation of $\MM$.
\end{itemize}
If $\TT$ satisfies the condition (F), then the silting quiver is nothing but the Hasse quiver of the partially ordered set $\silt\TT$ by Theorem \ref{two quivers}.
\end{definition}

We pose the following question.

\begin{question}\label{transitive question2}
Let $A$ be a finite dimensional algebra $A$ over a field and $\TT:=\K^{\rm b}(\proj A)$.
When is the silting quiver of $\TT$ connected?
Equivalently, when is the action of iterated irreducible mutation on $\silt\TT$ transitive?
\end{question}

If $A$ is local, then we have a positive answer by the following observation.

\begin{corollary}\label{corollary indecomposable case}
If $\TT$ has an indecomposable silting object, 
then the action of iterated irreducible mutation on $\silt\TT$ is transitive.
\end{corollary}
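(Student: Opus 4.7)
The plan is to combine Theorem \ref{indecomposable case}, which pins down $\silt\TT$ explicitly, with Theorem \ref{two quivers}, which identifies irreducible mutation with covers in the partial order. First I would note that since $\TT$ has an indecomposable silting object $M$, Theorem \ref{indecomposable case} gives $\silt\TT=\{M[i]\mid i\in\Z\}$. Next I would verify the hypothesis (F): any $\MM=\add M[i]\in\silt\TT$ has $\ind\MM=\{M[i]\}$, and for the unique $X=M[i]\in\ind\MM$ the subcategory $\MM_X=0$ is vacuously functorially finite in $\MM$, so Theorem \ref{two quivers} is applicable.

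Next I would compute the partial order on $\silt\TT$. Since $M$ is silting we have $\Hom_\TT(M,M[>0])=0$, hence $\Hom_\TT(M[i],M[j][>0])=\Hom_\TT(M,M[j-i+\ell])=0$ for all $\ell>0$ as soon as $j-i\ge 0$. Conversely, if $i>j$ then $\Hom_\TT(M[i],M[j][i-j])=\Hom_\TT(M,M)\neq 0$. Thus $M[i]\ge M[j]$ if and only if $i\le j$, so the order is the total order $\cdots>M[-1]>M>M[1]>\cdots$.

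Since this partial order is a chain, for every $i$ there is no silting subcategory strictly between $M[i]$ and $M[i+1]$, so by Theorem \ref{two quivers} the object $M[i+1]$ is an irreducible left mutation of $M[i]$ (explicitly, $\mu^+_{M[i]}(\add M[i])=\add M[i+1]$, obtained from the triangle $M[i]\to 0\to M[i+1]\to M[i+1]$). Therefore the silting quiver of $\TT$ is the connected chain $\cdots\to\add M[-1]\to\add M\to\add M[1]\to\cdots$, and iterated irreducible mutation acts transitively on $\silt\TT$. The only step requiring any real care is the verification of the partial order, which is immediate from the silting vanishing $\Hom_\TT(M,M[>0])=0$ together with nonvanishing of $\End_\TT(M)$, so there is no substantial obstacle.
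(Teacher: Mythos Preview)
Your proof is correct. The paper's own argument is more direct: it simply observes that $\mu^{\pm}_{M[i]}(M[i])=M[i\pm1]$ (which you also note parenthetically), and this together with Theorem~\ref{indecomposable case} immediately gives transitivity. Your detour through verifying condition~(F), computing the partial order, and invoking Theorem~\ref{two quivers} is valid but unnecessary, since the explicit mutation computation already shows that irreducible mutation moves $M[i]$ to $M[i\pm1]$ without needing to characterize irreducible mutations via covers in the order. The extra work does buy you an explicit description of the silting quiver as a chain, which is a nice byproduct, but for the bare transitivity statement the direct computation suffices.
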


\begin{proof}
This is immediate from Theorem \ref{indecomposable case}
since we have $\mu_{M[i]}^{\pm}(M[i])=M[i\pm1]$.
\end{proof}

Next we shall generalize Corollary \ref{corollary indecomposable case} by
introducing the following notion which generalizes `almost complete partial tilting modules'.

We call a subcategory $\DD$ of $\TT$ \emph{almost complete silting} if there exists a
silting subcategory $\MM$ of $\TT$ such that $\MM\supset\DD$ and $\#(\ind\MM\backslash\ind\DD)=1$.

Then we have the following analogue of \cite[2.1]{HU} for `tilting mutation'
and \cite[5.3]{IY} for `cluster tilting mutation'.

\begin{theorem}\label{almost transitivity}
Let $\TT$ be a Krull-Schmidt triangulated category and $\DD$ an almost complete silting subcategory.
If $\thick\DD$ is a functorially finite subcategory of $\TT$, then the set $\{\MM\in\silt\TT\ |\ \DD\subset\MM\}$ is transitive under iterated irreducible mutation.
\end{theorem}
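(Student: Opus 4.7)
The plan is to apply silting reduction with respect to $\SSS:=\thick\DD$ and thereby translate the question into transitivity of irreducible mutation on $\silt(\TT/\SSS)$, which I will show consists of the shifts of a single indecomposable silting object. Since $\DD$ is contained in a silting subcategory of $\TT$, I have $\Hom_{\TT}(\DD,\DD[>0])=0$, and combined with $\thick\DD=\SSS$ this shows $\DD\in\silt\SSS$. The hypothesis that $\SSS$ is functorially finite in $\TT$ then lets me invoke Theorem \ref{silting reduction}(b) to obtain a bijection
\[F\colon\{\MM\in\silt\TT\mid\DD\subset\MM\}\;\longrightarrow\;\silt(\TT/\SSS),\qquad\MM\mapsto F\MM.\]

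Next I would verify that every silting object of $\TT/\SSS$ is indecomposable. By Proposition \ref{M and FM}, $F$ identifies $\ind\MM\setminus\ind\DD$ with $\ind F\MM$ for every $\MM$ in the upstairs set. The almost-completeness of $\DD$ produces some $\MM_0$ upstairs with $\#(\ind\MM_0\setminus\ind\DD)=1$, so $F\MM_0$ is an indecomposable silting object of $\TT/\SSS$. Applying Corollary \ref{thm:number of indecomposable direct summands} inside $\TT/\SSS$ then forces every silting object of $\TT/\SSS$ to have a unique indecomposable summand, and Theorem \ref{indecomposable case} yields $\silt(\TT/\SSS)=\{(F\MM_0)[i]\mid i\in\Z\}$. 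Corollary \ref{corollary indecomposable case} now gives transitivity of iterated irreducible mutation on $\silt(\TT/\SSS)$.

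Finally I would lift this transitivity back to $\TT$. Given $\MM,\NN$ in $\{\MM\in\silt\TT\mid\DD\subset\MM\}$, I join $F\MM$ to $F\NN$ by an explicit chain of irreducible (left or right) mutations in $\TT/\SSS$. At a step with source $F\MM'$, the unique indecomposable of $F\MM'$ corresponds under $F$ to the unique $X\in\ind\MM'\setminus\ind\DD$, so $\DD\cap\add X=0$ and Lemma \ref{compatibility} (together with its right-mutation analogue) gives $F(\mu^{\pm}_{X}(\MM'))=\mu^{\pm}_{FX}(F\MM')$. Because $\mu^{\pm}_X(\MM')\supset\MM'_X=\DD$, the lifted silting subcategory stays in the upstairs set, and injectivity of $F$ identifies it with the unique lift of the downstairs target. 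Iterating produces the desired chain of irreducible mutations in $\TT$ from $\MM$ to $\NN$. The only genuine obstacle is bookkeeping at this last step, but the three checks — that the lift exists, that it is an irreducible mutation at the correct object, and that it remains inside the subset of silting subcategories containing $\DD$ — are all immediate from Lemma \ref{compatibility} and the identity $\MM'_X=\DD$.
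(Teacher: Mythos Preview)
Your proof is correct and follows essentially the same route as the paper: apply silting reduction (Theorem \ref{silting reduction}) with respect to $\SSS=\thick\DD$, observe that the quotient $\UU=\TT/\SSS$ has an indecomposable silting object, invoke Corollary \ref{corollary indecomposable case} for transitivity downstairs, and lift via Lemma \ref{compatibility} and its dual. You simply spell out more of the implicit steps --- in particular, using Proposition \ref{M and FM} to verify that $F\MM_0$ is indecomposable --- whereas the paper leaves this tacit; your detour through Corollary \ref{thm:number of indecomposable direct summands} is harmless but unnecessary, since Theorem \ref{indecomposable case} alone already determines $\silt\UU$ once a single indecomposable silting object is exhibited.
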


\begin{proof}
Let $\UU:=\TT/\thick\DD$.
By Theorem \ref{silting reduction} we have a bijection
$\{\MM\in\silt\TT\ |\ \DD\subset\MM\}\to\silt\UU$.
This commutes with mutation by Lemma \ref{compatibility} and its dual.
By Corollary \ref{corollary indecomposable case}, we know that $\silt\UU$ is transitive under iterated irreducible mutation.
Thus the assertion follows.
\end{proof}

We give examples of silting quivers.

\begin{example}
Let $A$ be a path algebra of the quiver $\xymatrix{1\ar[r]&2}$. 
We have the AR-quiver of $\K^{\rm b}(\proj{A})$ as follows:
\[\xymatrix@R.3cm@C.3cm{
& X_{-2} \ar[dr] &                & X_{0} \ar[dr] &               & X_{2} \ar[dr] & &\cdots\\
\cdots\ar[ur]&                & X_{-1} \ar[ur] &               & X_{1} \ar[ur] &               & X_3\ar[ur]
}\]

Then the silting quiver of $A$ is the following (cf. Theorem \ref{hereditary case}):
\[
\xymatrix@R=0.5cm{
\cdots\ar[dd]\ar[rddd]&&\cdots\ar[lddd]\ar[rddd]&&\cdots\ar[lddd]\\
&X_{-3}\oplus X_1\ar[lddd]\ar[rddd]&&X_{-6}\oplus X_4\ar[lddd]\ar[rddd]&\\
X_{-1}\oplus X_0\ar[dd]\ar[rddd]&&X_{-4}\oplus X_3\ar[lddd]\ar[rddd]&&\cdots\ar[lddd]\\
&X_{-2}\oplus X_2\ar[lddd]\ar[rddd]&&X_{-5}\oplus X_5\ar[lddd]\ar[rddd]&\\
X_0\oplus X_1\ar[dd]\ar[rddd]&&X_{-3}\oplus X_4\ar[lddd]\ar[rddd]&&\cdots\ar[lddd]\\
&X_{-1}\oplus X_3\ar[lddd]\ar[rddd]&&X_{-4}\oplus X_6\ar[lddd]\ar[rddd]&\\
X_1\oplus X_2\ar[dd]\ar[rddd]&&X_{-2}\oplus X_5\ar[lddd]\ar[rddd]&&\cdots\ar[lddd]\\
&X_0\oplus X_4\ar[lddd]\ar[rddd]&&X_{-1}\oplus X_7\ar[lddd]\ar[rddd]&\\
X_2\oplus X_3\ar[dd]&&X_{-1}\oplus X_6&&\\
&X_1\oplus X_5&&X_{-2}\oplus X_8&\\
\cdots &&\cdots&&\cdots
}
\]
Identifying each silting object $T$ with $T[i]$ for any $i\in\Z$, we can simplify the quiver as follows,
where $\xymatrix{M\ar[r]|{[1]}&N}$ means $\xymatrix{M\ar[r]&N[1]}$.
\[
\xymatrix{
X_0\oplus X_1\ar[dd]\ar@<.3em>[r]&X_0\oplus X_4\ar@<.3em>[r]\ar@<.3em>[l]|{[1]}&X_0\oplus X_7\ar@<.3em>[r]\ar@<.3em>[l]|{[1]}
&X_0\oplus X_{10}\ar@<.3em>[r]\ar@<.3em>[l]|{[1]}&\cdots\ar@<.3em>[l]|(0.4){[1]}\\
&X_2\oplus X_3\ar[lu]|{[1]}\ar@<.3em>[r]&X_2\oplus X_6\ar@<.3em>[r]\ar@<.3em>[l]|{[1]}
&X_2\oplus X_9\ar@<.3em>[r]\ar@<.3em>[l]|{[1]}&X_2\oplus X_{12}\ar@<.3em>[r]\ar@<.3em>[l]|{[1]}&\cdots\ar@<.3em>[l]|(0.35){[1]}\\
X_1\oplus X_2\ar[ru]\ar@<.3em>[r]&X_1\oplus X_5\ar@<.3em>[r]\ar@<.3em>[l]|{[1]}&X_1\oplus X_8\ar@<.3em>[r]\ar@<.3em>[l]|{[1]}
&X_1\oplus X_{11}\ar@<.3em>[r]\ar@<.3em>[l]|{[1]}&\cdots\ar@<.3em>[l]|(0.4){[1]}
}
\]
\end{example}

\begin{example}
Let $A$ be a path algebra of the quiver 
$\xymatrix{
1\ar@<.5em>[r]^{}="a" & 2 \ar@{<-}@<.5em>[l]^{}="b"  
\ar@{.}"a";"b"
}$ 
with $\ell\ge2$ arrows. 
The AR-quiver of $\K^{\rm b}(\proj{A})$ contains the following connected component:
\[\xymatrix@R.5cm@C.5cm{
& X_{-2}\ar@<-0.7em>[dr]^{}="a1"\ar@<0.5em>[dr]_{}="a2" & & X_{0} \ar@<-0.7em>[dr]^{}="b1"\ar@<0.5em>[dr]_{}="b2" 
& & X_{2} \ar@<-0.7em>[dr]^{}="c1"\ar@<0.5em>[dr]_{}="c2" & &\cdots\\
\cdots\ar@<0.5em>[ur]_{}="g1"\ar@<-0.7em>[ur]^{}="g2" & & X_{-1} \ar@<0.5em>[ur]_{}="d1"\ar@<-0.7em>[ur]^{}="d2" & & X_{1} \ar@<0.5em>[ur]_{}="e1"\ar@<-0.7em>[ur]^{}="e2" & & 
X_3\ar@<0.5em>[ur]_{}="f1"\ar@<-0.7em>[ur]^{}="f2"
\ar@{.}"a1";"a2" \ar@{.}"b1";"b2" \ar@{.}"c1";"c2" \ar@{.}"d1";"d2" \ar@{.}"e1";"e2" \ar@{.}"f1";"f2" \ar@{.}"g1";"g2"
}\]
Then the silting quiver of $A$ is the following (cf. Theorem \ref{hereditary case}):
\[
\xymatrix{
\cdots\ar[d]\\
X_{-1}\oplus X_0\ar[d]\ar@<.3em>[r]&X_{-1}\oplus X_0[1]\ar@<.3em>[r]\ar@<.3em>[l]|{[1]}
&X_{-1}\oplus X_0[2]\ar@<.3em>[r]\ar@<.3em>[l]|{[1]}&X_{-1}\oplus X_0[3]\ar@<.3em>[r]\ar@<.3em>[l]|{[1]}
&\cdots\ar@<.3em>[l]|(0.3){[1]}\\
X_0\oplus X_1\ar[d]\ar@<.3em>[r]&X_0\oplus X_1[1]\ar@<.3em>[r]\ar@<.3em>[l]|{[1]}&X_0\oplus X_1[2]\ar@<.3em>[r]\ar@<.3em>[l]|{[1]}
&X_0\oplus X_1[3]\ar@<.3em>[r]\ar@<.3em>[l]|{[1]}&\cdots\ar@<.3em>[l]|(0.35){[1]}\\
X_1\oplus X_2\ar[d]\ar@<.3em>[r]&X_1\oplus X_2[1]\ar@<.3em>[r]\ar@<.3em>[l]|{[1]}&X_1\oplus X_2[2]\ar@<.3em>[r]\ar@<.3em>[l]|{[1]}
&X_1\oplus X_2[3]\ar@<.3em>[r]\ar@<.3em>[l]|{[1]}&\cdots\ar@<.3em>[l]|(0.35){[1]}\\
X_2\oplus X_3\ar[d]\ar@<.3em>[r]&X_2\oplus X_3[1]\ar@<.3em>[r]\ar@<.3em>[l]|{[1]}&X_2\oplus X_3[2]\ar@<.3em>[r]\ar@<.3em>[l]|{[1]}
&X_2\oplus X_3[3]\ar@<.3em>[r]\ar@<.3em>[l]|{[1]}&\cdots\ar@<.3em>[l]|(0.35){[1]}\\
\cdots\\
}\]
\end{example}

\begin{example}
Let $A$ be an algebra presented by a quiver $\xymatrix{1\ar^a@<.3em>[r]&\ar^b@<.3em>[l]2}$
with relations $ab=0=ba$.
Then the silting quiver of $A$ is the following, where $X:={\rm cone}(P_1\to P_2)$ and $Y:={\rm cone}(P_2\to P_1)$:
\[
\xymatrix{
&X\oplus P_2\ar@<.3em>[r]\ar[d]&X[1]\oplus P_2\ar@<.3em>[r]\ar@<.3em>[l]|{[1]}
&X[2]\oplus P_2\ar@<.3em>[r]\ar@<.3em>[l]|{[1]}&X[3]\oplus P_2\ar@<.3em>[r]\ar@<.3em>[l]|{[1]}&\cdots\ar@<.3em>[l]|(0.35){[1]}\\
&X\oplus P_1[1]\ar@<.3em>[r]\ar[ld]|{[1]}&X\oplus P_1[2]\ar@<.3em>[r]\ar@<.3em>[l]|{[1]}
&X\oplus P_1[3]\ar@<.3em>[r]\ar@<.3em>[l]|{[1]}&X\oplus P_1[4]\ar@<.3em>[r]\ar@<.3em>[l]|{[1]}&\cdots\ar@<.3em>[l]|(0.35){[1]}\\
P_1\oplus P_2\ar[ruu]\ar[rd]\\
&Y\oplus P_1\ar@<.3em>[r]\ar[d]&Y[1]\oplus P_1\ar@<.3em>[r]\ar@<.3em>[l]|{[1]}
&Y[2]\oplus P_1\ar@<.3em>[r]\ar@<.3em>[l]|{[1]}&Y[3]\oplus P_1\ar@<.3em>[r]\ar@<.3em>[l]|{[1]}&\cdots\ar@<.3em>[l]|(0.35){[1]}\\
&Y\oplus P_2[1]\ar@<.3em>[r]\ar[luu]|{[1]}&Y\oplus P_2[2]\ar@<.3em>[r]\ar@<.3em>[l]|{[1]}
&Y\oplus P_2[3]\ar@<.3em>[r]\ar@<.3em>[l]|{[1]}&Y\oplus P_2[4]\ar@<.3em>[r]\ar@<.3em>[l]|{[1]}&\cdots\ar@<.3em>[l]|(0.35){[1]}
}\]
\end{example}

\begin{example}
Let $A$ be an algebra presented by a quiver $\xymatrix{1\ar^a@<.3em>[r]&\ar^b@<.3em>[l]2}$
with relations $(ab)^{\ell}a=0=(ba)^{\ell}b$ ($\ell\ge1$).
The silting quiver of $A$ is connected by \cite{A2}, and it is the following:
\[
{\small{
\xymatrix@R=0.1pt @C=0.3cm{
&& & && && & && & & && & & & &&\\
&& & && && & && & & && P_1 \ar[r] & P_2 \ar[r] & P_2 \ar[r] & P_2 &&\\
&& & && && & && & & && 0 \ar[r] & 0 \ar[r] & 0 \ar[r] & P_2 &&\\
&& & && && & && P_1 \ar[r] & P_2 \ar[r] & P_2 && & & & &&\\
&& & && && & && 0 \ar[r] & 0 \ar[r] & P_2 && & & & &&\\   
&& & && && & && & & && P_1 \ar[r] & P_2 \ar[r] & P_2 \ar[r] & P_2 &&\\
&& & && && & && & & && P_1 \ar[r] & P_2 \ar[r] & P_2 \ar[r] & 0 &&\\
&& 0 \ar[r] & P_1 && && P_1 \ar[r] & P_2 && & & && & & & &&\\
&& P_2 \ar[r] & P_1 && && 0 \ar[r] & P_2 && & & && & & & &&\\
&& & && && & && & & && P_1 \ar[r] & P_1\oplus P_2 \ar[r] & P_2 \ar[r] & P_2 &&\\
&& & && && & && & & && 0 \ar[r] & P_1 \ar[r] & P_2 \ar[r] & P_2 &&\\
&& & && && & && P_1 \ar[r] & P_2 \ar[r] & P_2 && & & & &&\\
&& & && && & && P_1 \ar[r] & P_2 \ar[r] & 0 && & & & &&\\
&& & && && & && & & && P_1 \ar[r] & P_1\oplus P_2 \ar[r] & P_2 \ar[r] & P_2 &&\\
&& & && && & && & & && P_1 \ar[r] & P_2 \ar[r] & 0 \ar[r] & 0 &&\\
&& & && P_1\oplus P_2 && & && & & && & & & &&\\
&& & && && & && & & && P_1 \ar[r] & P_1 \ar[r] & P_1\oplus P_2 \ar[r] & P_2 &&\\
&& & && && & && & & && 0 \ar[r] & 0 \ar[r] & P_1 \ar[r] & P_2 &&\\
&& & && && & && P_1 \ar[r] & P_1 \ar[r] & P_2 && & & & &&\\
&& & && && & && 0 \ar[r] & P_1 \ar[r] & P_2 && & & & &&\\
&& & && && & && & & && P_1 \ar[r] & P_1 \ar[r] & P_1\oplus P_2 \ar[r] & P_2 &&\\
&& & && && & && & & && P_1 \ar[r] & P_1 \ar[r] & P_2 \ar[r] & 0 &&\\
&& P_2 \ar[r] & 0   && && P_1 \ar[r] & P_2 && & & && & & & &&\\
&& P_2 \ar[r] & P_1 && && P_1 \ar[r] & 0 && & & && & & & &&\\
&& & && && & && & & && P_1 \ar[r] & P_1 \ar[r] & P_1 \ar[r] & P_2 &&\\
&& & && && & && & & && 0 \ar[r] & P_1 \ar[r] & P_1 \ar[r] & P_2 &&\\
&& & && && & && P_1 \ar[r] & P_1 \ar[r] & P_2 && & & & &&\\
&& & && && & && P_1 \ar[r] & 0 \ar[r] & 0 && & & & &&\\
&& & && && & && & & && P_1 \ar[r] & P_1 \ar[r] & P_1 \ar[r] & P_2 &&\\
&& & && && & && & & && P_1 \ar[r] & 0 \ar[r] & 0 \ar[r] & 0 &&\\
&& & && && & && & & && & & & &&
\save "2,15"."3,18"*[F]\frm{}="a" \restore
\save "6,15"."7,18"*[F]\frm{}="b" \restore
\save "10,15"."11,18"*[F]\frm{}="c" \restore
\save "14,15"."15,18"*[F]\frm{}="d" \restore
\save "17,15"."18,18"*[F]\frm{}="e" \restore
\save "21,15"."22,18"*[F]\frm{}="f" \restore
\save "25,15"."26,18"*[F]\frm{}="g" \restore
\save "29,15"."30,18"*[F]\frm{}="h" \restore
\save "4,11"."5,13"*[F]\frm{}="i" \restore
\save "12,11"."13,13"*[F]\frm{}="j" \restore
\save "19,11"."20,13"*[F]\frm{}="k" \restore
\save "27,11"."28,13"*[F]\frm{}="l" \restore
\save "8,8"."9,9"*[F]\frm{}="m" \restore
\save "23,8"."24,9"*[F]\frm{}="n" \restore
\save "16,6"*[F]\frm{}="o" \restore
\save "8,3"."9,4"*[F]\frm{}="p" \restore
\save "23,3"."24,4"*[F]\frm{}="q" \restore
\ar@<6em> "a";"b"
\ar@<6em> "c";"d"
\ar@<6em> "e";"f"
\ar@<6em> "g";"h"
\ar@<2.5em> "i";"j"
\ar@<2.5em> "k";"l"
\ar@<1.3em> "m";"n"
\ar@<1.3em> "p";"q"
\ar "2,18";"1,20" 
\ar "6,18";"5,20" 
\ar "10,18";"9,20" 
\ar "14,18";"13,20" 
\ar "17,18";"17,20" 
\ar "21,18";"20,20" 
\ar "25,18";"24,20" 
\ar "29,18";"28,20" 
\ar "4,20";"3,18" |(0.4){[1]} 
\ar "8,20";"7,18" |(0.4){[1]} 
\ar "12,20";"11,18" |(0.4){[1]} 
\ar "15,20";"15,18" |(0.4){[1]} 
\ar "19,20";"18,18" |(0.4){[1]} 
\ar "23,20";"22,18" |(0.4){[1]} 
\ar "27,20";"26,18" |(0.4){[1]} 
\ar "31,20";"30,18" |(0.4){[1]} 
\ar "4,13";{"3,15"+<-0.9em, -0.1em>}
\ar "12,13";{"11,15"+<-0.9em, -0.1em>}
\ar "19,13";{"18,15"+<-0.9em, -0.1em>}
\ar "27,13";{"26,15"+<-0.9em, -0.1em>}
\ar "b";"i" |(0.2){[1]}
\ar "d";"j" |(0.2){[1]}
\ar "f";"k" |(0.2){[1]}
\ar "h";"l" |(0.2){[1]}
\ar "8,9";{"5,11"+<-0.9em, -0.4em>}
\ar "23,9";{"20,11"+<-0.9em, -0.4em>}
\ar "j";"m" |(0.35){[1]}
\ar "l";"n" |(0.35){[1]}
\ar "o";"m"
\ar "n";"o" |{[1]}
\ar "o";"p"
\ar {"q"+<1.7em,0.9em>};"o" |{[1]}
\ar "8,3";"6,1"
\ar "23,3";"21,1"
\ar "11,1";"9,3" |(0.4){[1]}
\ar "26,1";"24,3" |(0.4){[1]}
}}}
\]
\end{example}


\subsection{Okuyama-Rickard complexes and APR and BB tilting modules} \label{Examples of silting subcategories and objects}

Our silting mutation has two important origins in representation theory: 
\begin{itemize}
\item Okuyama-Rickard complexes and Okuyama's method in modular representation theory;
\item APR tilting modules and BB tilting modules;
\end{itemize}
In this section we will explain the relationship between silting mutation and these notions.

Throughout this section, let $A$ be a finite dimensional algebra over a field $k$. 
We may assume that $A$ is basic and indecomposable as an $A$-$A$-bimodule.
For an $A$-module $X$, we denote by $P(X)$ a projective cover of $X$.
We denote by $\nu =D\homo{A}{-}{A}:\modu{A}\to\modu{A}$ the Nakayama functor.
We denote by $\tau$ and $\tau^{-1}$ the Auslander-Reiten translations \cite{ARS,ASS}. 


\begin{definition}
For idempotent $e\in A$,
the \emph{Okuyama-Rickard complex} with respect to $e$ is defined by
\begin{equation}\label{OR complex}
T:=\left\{\begin{array}{ccc}
\stackrel{0}{P(eA(1-e)A)}&\stackrel{p_e}{\longrightarrow}&\stackrel{1}{eA}\\
\oplus&&\\
(1-e)A&&
\end{array}\right.
\end{equation}
where $p_e$ gives a projective cover of the submodule $eA(1-e)A$ of $eA$.
\end{definition}

In \cite{O}, Okuyama constructed Okuyama-Rickard complexes and proved that it is tilting if $A$ is symmetric. 
The method of his construction is often called \emph{Okuyama's method}. 

Let us give basic properties of Okuyama-Rickard complexes in our context of silting mutation.

\begin{theorem}\label{prop:1_right_mu}
Let $e\in A$ be an idempotent and $T$ the Okuyama-Rickard complex with respect to $e$. 
\begin{itemize}
\item[(a)] $T$ is isomorphic to right mutation $\mu^{-}_{eA}(A)$ of $A$ with respect to $eA$.
\item[(b)] $T$ is a silting object in $\K^{\rm{b}}(\proj{A})$.
\item[(c)] The following conditions are equivalent.
\begin{itemize}
\item[(i)] $T$ is a tilting object in $\K^{\rm{b}}(\proj{A})$.
\item[(ii)] $\homo{A}{eA/eA(1-e)A}{(1-e)A}=0$.
\end{itemize}
\item[(d)] If $A$ is a self-injective algebra and $eA\simeq\nu (eA)$ holds, then $T$ is a tilting object in $\K^{\rm{b}}(\proj{A})$.
\end{itemize}
\end{theorem}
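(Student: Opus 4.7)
My plan is to handle the four parts in order, with (a) and (d) carrying the real content. Throughout write $e=\sum_{i\in S}e_i$ where $\{e_i\}$ is a complete set of primitive orthogonal idempotents.

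For (a), I will identify $T$ with $\mu^-_{eA}(A)=\mu^-(\add A;\add((1-e)A))$; summands of $A$ lying in $\add((1-e)A)$ give $N_M=0$, so the only nontrivial contribution comes from the triangle $N_{eA}\to D\to eA\to N_{eA}[1]$ associated with a right $\add((1-e)A)$-approximation $D\to eA$. The key preliminary is that $P(eA(1-e)A)\in\add((1-e)A)$: the top of $eA(1-e)A$ contains no simple $S_i$ with $i\in S$, because any $A$-linear $f:eA(1-e)A\to S_i$ vanishes on the generating subspace $eA(1-e)$ (each $x=ea(1-e)$ satisfies $xe_i=ea(1-e)e_i=0$ using $e_i\leq e$, hence $f(x)=f(x)e_i=f(xe_i)=0$). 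Then any map from $((1-e)A)^n$ into $eA$ has image in the trace $eA(1-e)A$, so projectivity lifts it through $p_e$, making $p_e$ a right $\add((1-e)A)$-approximation. In $\K^{\rm b}(\proj A)$ the mutation triangle identifies $N_{eA}$ with the two-term complex $[P(eA(1-e)A)\xrightarrow{p_e}eA]$ concentrated in degrees $0$ and $1$, so $\mu^-_{eA}(A)=\add(N_{eA}\oplus(1-e)A)=\add T$. Part (b) is then immediate from Theorem \ref{thm:mutation is silting}.

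For (c), I apply Theorem \ref{when silting is tilting}(b) to $g=p_e$: $T$ is tilting if and only if $\Hom_A(p_e,(1-e)A)$ is injective. Its kernel consists of $\phi:eA\to(1-e)A$ with $\phi\circ p_e=0$, equivalently $\phi$ vanishing on $\operatorname{Im}p_e=eA(1-e)A$; such $\phi$ are in natural bijection with $\Hom_A(eA/eA(1-e)A,(1-e)A)$, giving the equivalence.

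The substantive step is (d). The hypothesis $eA\cong\nu(eA)$ on a self-injective $A$, combined with $\nu(e_iA)=e_{\sigma(i)}A$ for the Nakayama permutation $\sigma$, forces $\sigma(S)=S$. I will verify condition (c)(ii) directly. Parametrize $\Hom_A(eA,(1-e)A)=(1-e)Ae$ via $\alpha\mapsto(ex\mapsto\alpha x)$; then vanishing on $eA(1-e)A$ becomes the equation $\alpha A(1-e)=0$. For such $\alpha$, the cyclic right submodule $\alpha A\subseteq(1-e)A$ is right-annihilated by every primitive idempotent $e_k$ with $k\notin S$, because $e_k=(1-e)e_k$. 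Its socle embeds into $\operatorname{soc}((1-e)A)=\bigoplus_{j\notin S}S_{\sigma^{-1}(j)}$, which by $\sigma(S)=S$ equals $\bigoplus_{k\notin S}S_k$; each $S_k$ satisfies $S_k e_k=S_k\neq0$, contradicting $(\alpha A)e_k=0$ unless $\operatorname{soc}(\alpha A)=0$. Finite-dimensionality then forces $\alpha A=0$, hence $\alpha=0$. The main obstacle is precisely this socle argument: the $\sigma$-stability of $S$ is what aligns $\operatorname{soc}((1-e)A)$ with indices outside $S$ so that the right annihilation by each $e_k$ ($k\notin S$) bites.
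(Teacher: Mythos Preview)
Your proof is correct and follows essentially the same approach as the paper. For (a) you verify that $p_e$ is a right $\add((1-e)A)$-approximation via the trace of $(1-e)A$ in $eA$, exactly as the paper does (the paper phrases it as ``$eA(1-e)A$ is minimal amongst submodules $X$ of $eA$ such that every composition factor of $eA/X$ lies in $\add(\operatorname{top} eA)$'', which is the same observation); (b) and (c) are the immediate applications of Theorems~\ref{thm:mutation is silting} and~\ref{when silting is tilting} that the paper also invokes. For (d), the paper argues that every composition factor of $eA/eA(1-e)A$ lies in $\add(\operatorname{top} eA)$, while $\add(\operatorname{soc}(1-e)A)=\add(\operatorname{top}(1-e)A)$ by the hypothesis $eA\simeq\nu(eA)$, so no nonzero map can exist --- this is precisely your socle argument, just expressed in terms of composition factors rather than the element equation $\alpha A(1-e)=0$; the Nakayama-permutation stability $\sigma(S)=S$ is the common key input.
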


\begin{proof}
(a) Since $eA(1-e)A$ is minimal amongst submodules $X$ of $eA$ such that any composition factor of $eA/X$ belongs to $\add({\rm top}\ eA)$,
we have that $p_e$ in \eqref{OR complex} is a minimal right $\add{(1-e)A}$-approximation of $eA$. 
Thus the assertion follows.

(b)(c) These are immediate consequences of Theorems \ref{thm:mutation is silting} and \ref{when silting is tilting}.

(d) This is immediate from (c) since $\add({\rm top}\ eA)\cap\add({\rm soc}(1-e)A)=\add({\rm top}\ eA)\cap\add({\rm top} (1-e)A)=0$ holds by our assumption
and any composition factor of $eA/eA(1-e)A$ is isomorphic to ${\rm top}\ eA$.
\end{proof}

The partial order $\le$ on $\silt{\K^{\rm{b}}(\proj{A})}$
have the following easy interpretation for Okuyama-Rickard complexes. 

\begin{proposition}\label{prop:relation of OR}
Let $e, e'\in A$ be idempotents and $T, T'$ the Okuyama-Rickard complexes with respect to $e, e'$ respectively. 
Then the following conditions are equivalent.
\begin{itemize}
\item[(a)] $\add{eA} \supseteq \add{e'A}$.
\item[(b)] $T \ge T'$.   
\end{itemize}
\end{proposition}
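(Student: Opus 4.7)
The plan is to prove the two directions by different methods; write $f := 1-e$ and $f' := 1-e'$.

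For $(b) \Rightarrow (a)$ the argument is short. By Proposition \ref{mutation basic}(b) we have $T \geq A$ and $T' \geq A$, so the hypothesis $T \geq T'$ yields the chain $T \geq T' \geq A$ and Proposition \ref{common summand} gives $\add T \cap \add A \subseteq \add T'$. The task then reduces to identifying $\add T \cap \add A$ with $\add fA$: each non-trivial mutated summand of $T$ is either a shifted stalk $e_iA[-1]$ (when $e_iAfA = 0$) or a two-term complex with non-surjective projective cover map $p_e$, and the minimality of $p_e$ combined with Nakayama's lemma rules out any stalk-in-degree-$0$ summand lying in $\add A$. An identical analysis of $T'$ shows that its stalk-in-degree-$0$ summands lie in $\add f'A$, so $\add fA \subseteq \add f'A$, equivalently $\add eA \supseteq \add e'A$.

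For $(a) \Rightarrow (b)$ I would show $\Hom_{\TT}(T, T'[n]) = 0$ for every $n \geq 1$. The plan is to decompose $T = fA \oplus N$ and $T' = f'A \oplus N'$ via the defining triangles $N \to P \xrightarrow{p_e} eA \to N[1]$ (with $P = P(eAfA)$) and $N' \to P' \xrightarrow{p_{e'}} e'A \to N'[1]$ (with $P' = P(e'Af'A)$), and to split $\Hom$ into four bilinear blocks. Three of them are routine: $\Hom(fA, f'A[n]) = 0$ is immediate; $\Hom(N, f'A[n]) = 0$ follows from the LES of the first triangle since $\Hom(P, f'A[n]) = \Hom(eA, f'A[n]) = 0$ for $n \geq 1$; and $\Hom(fA, N'[n])$ is nontrivial only for $n = 1$, where the LES reduces it to surjectivity of $\Hom(fA, P') \to \Hom(fA, e'A)$, which follows from the hypothesis because any $\phi \colon fA \to e'A$ has image in $e'AfA$, and $e'AfA \subseteq e'Af'A = \mathrm{im}\, p_{e'}$ since $AfA \subseteq Af'A$.

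The main obstacle is the fourth block, $\Hom(N, N'[n])$. For $n \geq 2$ the LES of the second triangle combined with $P', e'A \in \TT_T^{\leq 0}$ (a consequence of $T \geq A$) gives the vanishing for free. The hard case is $n = 1$, where the LES presents $\Hom(N, N'[1])$ as $\mathrm{coker}(\Hom(N, P') \to \Hom(N, e'A))$, and unpacking morphisms out of the two-term complex $N$ modulo homotopy rewrites the surjectivity as that of $(p_e)^* \colon \Hom(eA, E) \to \Hom(P, E)$, where $E := e'A/e'Af'A$. The key observation is that $E$ is annihilated by the two-sided ideal $Af'A$, and the hypothesis gives $AfA \subseteq Af'A$, so every morphism $eA \to E$ vanishes on $eAfA = \mathrm{im}\, p_e$; hence $(p_e)^*$ is identically zero and the surjectivity amounts to $\Hom(P, E) = 0$. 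For this, the containment $eAf \subseteq \mathrm{rad}(eA)$ forces the top of $eAfA$ to have composition factors only among $\{S_j : e_j \leq f\}$, so every indecomposable summand of $P = P(eAfA)$ is of the form $P_j$ with $e_j \leq f$ and therefore $e_j \leq f'$ by hypothesis; for such $j$ a direct calculation gives $Ee_j = e'Ae_j / e'Af'Ae_j = 0$, closing the argument.
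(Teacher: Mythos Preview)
Your proof is correct, but both directions take a longer route than the paper's.

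For $(b)\Rightarrow(a)$, your use of Proposition~\ref{common summand} (the interval property $\MM\cap\NN\subset\LL$) together with the identification $\add T\cap\add A=\add fA$ is a nice structural argument and works as stated. The paper instead argues directly at the chain level: $\Hom(T,T'[1])=0$ says every module map $f\colon P(eAfA)\oplus fA\to e'A$ is of the form $p_{e'}s+tp_e$; since $p_e,p_{e'}\in J_{\modu A}$, every such $f$ lies in the radical, so $fA$ and $e'A$ share no summand. Your approach trades this computation for an appeal to the partial order machinery already developed, which is arguably more conceptual.

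For $(a)\Rightarrow(b)$, your four-block analysis is correct but does considerably more work than needed. The paper observes that a chain map $T\to T'[1]$ is a single module map
\[
f\colon P(eAfA)\oplus fA\longrightarrow e'A,
\]
and since the entire domain lies in $\add fA\subseteq\add f'A$ while $p_{e'}$ is a right $\add f'A$-approximation of $e'A$, the map $f$ factors through $p_{e'}$; this factorization \emph{is} the null-homotopy. Your block~4 eventually reaches the same destination---you show $P=P(eAfA)\in\add f'A$ and hence $\Hom(P,E)=0$---but the detour through $(p_e)^*=0$ on $\Hom(-,E)$ is unnecessary: once you know $P\in\add f'A$, any $\alpha^0\colon P\to e'A$ factors through $p_{e'}$ directly (take $s^1=0$), and blocks~3 and~4 collapse into the paper's one-line argument.
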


\begin{proof}
Assume $\add{eA} \supseteq \add{e'A}$. 
Clearly we have $\Hom_{\K^{\rm{b}}(\proj{A})}(T,T'[>1])=0$. 
Take any morphism
\[\xymatrix{
T\ar[d]&0\ar[r]\ar[d]&P(eA(1-e)A)\oplus(1-e)A\ar^(0.8){p_{e}}[r]\ar^{f}[d]&eA\ar[d]\\
T'[1]&P(e'A(1-e')A)\oplus(1-e')A\ar^(0.7){p_{e'}}[r]&e'A\ar[r]&0
}\]
Since $P(eA(1-e)A)\oplus(1-e)A\in\add{(1-e)A}\subset\add{(1-e')A}$
and $p_{e'}$ is a right $\add{(1-e')A}$-approximation, $f$ factors through $p_{e'}$.
Thus we have $\Hom_{\K^{\rm{b}}(\proj{A})}(T,T'[1])=0$, so $T\ge T'$. 

Assume $T\ge T'$. Then we have $\Hom_{\K^{\rm{b}}(\proj{A})}(T,T'[1])=0$.
In particular, for any morphism $f:P(eA(1-e)A)\oplus(1-e)A\to e'A$,
there exist $s$ and $t$ such that $f=p_{e'}s+tp_e$.
Since both $p_e$ and $p_{e'}$ belongs to $J_{\modu{A}}$, any morphism
in $\homo{A}{P(eA(1-e)A)\oplus(1-e)A}{e'A}$ belongs to $J_{\modu{A}}$.
In particular $(1-e)A$ and $e'A$ do not have a non-zero common summand,
and we have  $\add{eA}\supseteq \add{e'A}$. 
\end{proof}



We end this section by observing a connection between BB and APR tilting modules and silting mutation.

\begin{definition}\label{def:APR and BB}
Let $e\in A$ be a primitive idempotent and $S$ the corresponding simple $A$-module.
Assume ${\rm{Ext}}_{A}^{1}(S, S)=0$, ${\rm{pd}}(\tau^{-1}S)\leq 1$ and that $S$ is not injective.
We define a \emph{BB tilting module} \cite{BB} with respect to $e$ by
\[T:=\tau^{-1}S\oplus (1-e)A\] 
A BB tilting module is called an \emph{APR tilting module} \cite{APR} if $S$ is projective. 
\end{definition}

%


\begin{theorem}\label{prop:APR is APR}
Let $T$ be the BB tilting module with respect to $e$.
\begin{itemize}
\item[(a)] $T$ is isomorphic to left mutation $\mu_{eA}^+(A)$ of $A$ with respect to $eA$.
\item[(b)] $T$ is a tilting $A$-module of projective dimension at most one.
\end{itemize}
\end{theorem}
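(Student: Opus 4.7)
The plan is to identify $T$ with the left silting mutation $\mu^+_{eA}(A)$ via a canonical short exact sequence in $\modu A$, and then upgrade from silting to tilting using Theorem~\ref{when silting is tilting}.

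For part (a), the first task is to build a minimal projective resolution
\[
0\to P_1\xrightarrow{f} P_0\to \tau^{-1}S\to 0
\]
of $\tau^{-1}S$ (which exists of length one by hypothesis) and show that $P_1\cong eA$ and $P_0\in\add((1-e)A)$. To see $P_0\in\add((1-e)A)$, I would show $\Hom_A(\tau^{-1}S,S)=0$: by Auslander--Reiten duality,
\[
\underline{\Hom}_A(\tau^{-1}S,S)\cong D\Ext^1_A(S,S)=0,
\]
and examining the AR-sequence $0\to S\to E\to \tau^{-1}S\to 0$ excludes nontrivial factorizations through projectives, giving $\Hom_A(\tau^{-1}S,S)=0$ and hence no $S$ in the top of $\tau^{-1}S$. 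To see $P_1\cong eA$, apply the Nakayama functor $\nu$ to the resolution; right exactness together with the identity $\tau(\tau^{-1}S)=\ker(\nu f)=S$ forces $S\hookrightarrow\mathrm{soc}(\nu P_1)=\mathrm{top}(P_1)$, so $eA$ is a summand of $P_1$, and minimality of the resolution combined with simplicity of the kernel $S$ then gives $P_1=eA$ exactly.

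Next, I would verify that $f\colon eA\hookrightarrow P_0$ is a left $\add((1-e)A)$-approximation. Applying $\Hom_A(-,(1-e)A)$ to the short exact sequence, the relevant Ext term vanishes because
\[
\Ext^1_A(\tau^{-1}S,(1-e)A)\cong D\overline{\Hom}_A((1-e)A,S)=0,
\]
using that $S(1-e)=0$ implies $\Hom_A((1-e)A,S)=0$. Hence the triangle $eA\xrightarrow{f}P_0\to\tau^{-1}S\to eA[1]$ in $\K^{\rm b}(\proj A)$ identifies $N_{eA}\cong\tau^{-1}S$, and therefore $\mu^+_{eA}(A)=\add((1-e)A\cup\{\tau^{-1}S\})=\add T$, proving (a).

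For part (b), Theorem~\ref{thm:mutation is silting} combined with (a) gives that $T$ is silting. To upgrade to tilting, Theorem~\ref{when silting is tilting}(a) demands that $\Hom_{\TT}((1-e)A,f)$ be injective; since $(1-e)A$ is projective this reduces to $\Hom_A((1-e)A,-)$ applied to the module injection $f$, which is injective by left exactness. Finally, ${\rm pd}_A T\le 1$ is immediate since both summands have projective dimension at most one. The main technical obstacle is the identification $P_1\cong eA$: one must rule out extraneous indecomposable summands of $P_1$ using the Nakayama-functor calculation $\tau(\tau^{-1}S)=\ker(\nu f)=S$ together with the minimality of the projective resolution. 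Once this is established, the rest reduces to standard Ext and approximation computations and to direct applications of the silting and tilting mutation theorems developed earlier.
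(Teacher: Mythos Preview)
Your approach is dual to the paper's and largely correct, but one step is not justified. The paper starts from the minimal injective copresentation $0\to S\to D(Ae)\xrightarrow{f} I$ of $S$: the hypothesis $\Ext^1_A(S,S)=0$ immediately gives $I\in\add D(A(1-e))$, so $f$ is a minimal left $\add D(A(1-e))$-approximation; applying the equivalence $\nu^{-1}:\inj A\to\proj A$ transports this to a minimal left $\add((1-e)A)$-approximation $\nu^{-1}f:eA\to \nu^{-1}I$, whose cokernel is $\tau^{-1}S$ by definition, and injectivity of $\nu^{-1}f$ follows from $\mathrm{pd}(\tau^{-1}S)\le 1$. Thus both facts you work hard for, namely $P_1\cong eA$ and $P_0\in\add((1-e)A)$, come for free. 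Your argument for (b) coincides with the paper's.

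The gap in your route is the claim that ``examining the AR-sequence $0\to S\to E\to\tau^{-1}S\to 0$ excludes nontrivial factorizations through projectives''. You correctly get $\underline{\Hom}_A(\tau^{-1}S,S)=0$ from AR duality, so every map $\tau^{-1}S\to S$ factors through the projective cover $eA\twoheadrightarrow S$; but the almost split sequence gives lifting properties for maps \emph{into} $\tau^{-1}S$ or \emph{out of} $S$, not for maps $\tau^{-1}S\to eA$, and I do not see how it forces such compositions to vanish. In fact the statement $\Hom_A(\tau^{-1}S,S)=0$ is exactly the assertion that the projective cover of $\tau^{-1}S$ lies in $\add((1-e)A)$, and the clean way to see this is precisely the injective-resolution computation the paper uses. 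Your Nakayama-functor identification of $P_1\cong eA$ is valid and in fact does not rely on knowing $P_0\in\add((1-e)A)$; but to finish you still need the latter, and the AR-sequence does not supply it.
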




\begin{proof}
(a) Take a minimal injective resolution of the $A$-module $S$:
\[0\longrightarrow S\longrightarrow D(Ae)\stackrel{f}{\longrightarrow}I\]
Since $\Ext^1_A(S,S)=0$, we have $I\in\add{D(A(1-e))}$.
Thus $f$ is a minimal left $\add{D(A(1-e))}$-approximation.
Applying $\nu^{-1}$, we have an exact sequence
\[eA\xrightarrow{\nu^{-1}f}\nu^{-1}I\longrightarrow\tau^{-1}S\longrightarrow0\]
with a minimal left $(1-e)A$-approximation $\nu^{-1}f$.
Since ${\rm{pd}}(\tau^{-1}S)\leq 1$ and $S$ is not injective, we have that $\nu^{-1}f$ is injective. 
Thus we have $T\simeq\nu_{eA}^+(A)$.

(b) Since $\nu^{-1}f$ is injective, so is $\Hom_{\K^{\rm{b}}(\proj{A})}((1-e)A,\nu^{-1}f)$.
Thus the assertion follows from Theorem \ref{when silting is tilting}.
\end{proof}



\section{Transitivity for piecewise hereditary algebras}

The aim of this section is to prove the following result.

\begin{theorem}\label{hereditary case}
Let $\TT=\K^{\rm b}(\proj{A})$ where $A$ is either a hereditary algebra or a canonical algebra over a field $k$.
Then the action of iterated irreducible silting mutation on $\silt\TT$ is transitive.
\end{theorem}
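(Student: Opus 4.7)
The plan is to connect any basic silting object $M \in \silt\TT$ to the standard silting object $A$ via iterated irreducible silting mutations. The key structural input is that $A$ is piecewise hereditary: $\TT$ is triangle equivalent to $\D^{\rm b}(\mathcal{H})$ for a hereditary abelian category $\mathcal{H}$ -- namely $\mathcal{H} = \modu A$ in the hereditary case, or $\mathcal{H} = \operatorname{coh} \mathbb{X}$ for a weighted projective line $\mathbb{X}$ in the canonical case. Consequently every indecomposable object of $\TT$ has the form $X[n]$ for an indecomposable $X \in \mathcal{H}$ and some $n \in \Z$, and $\Hom_\TT(X[m], Y[n]) \ne 0$ forces $n - m \in \{0, 1\}$.

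First I would reduce to the concentrated case, where all summands of $M$ sit in a single degree. Writing $M = \bigoplus_{i=1}^{r} X_i[n_i]$ with $X_i \in \mathcal{H}$ indecomposable, set the \emph{spread} $s(M) := \max_i n_i - \min_i n_i$, and induct on $s(M)$. If $s(M) > 0$, take a summand $X_i[n_i]$ with $n_i$ maximal; the irreducible left mutation $\mu^+_{X_i[n_i]}(M)$ (Definition \ref{mutation}) replaces $X_i[n_i]$ by the cone of a minimal left $\add(M / X_i[n_i])$-approximation. The hereditary $\Hom$-vanishing confines this cone to $\add(\mathcal{H}[n_i]) \cup \add(\mathcal{H}[n_i+1])$. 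An analysis combining Lemma \ref{no common summand} and Theorem \ref{grothendieck} shows that after finitely many such mutations, either the number of summands in the maximal degree strictly decreases, or the top layer is entirely pushed into layer $n_i + 1$ and the maximal $n_i$ drops, lowering $s(M)$.

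Once $s(M) = 0$, $M$ is a shift $T[n]$ of a tilting object $T$ in $\mathcal{H}$. For fixed $n$, the tilting objects in $\mathcal{H}[n]$ are connected to one another by irreducible silting mutations: this is an enhancement of the Happel--Unger connectivity theorem, where the flexibility that silting mutation is always possible (Theorem \ref{thm:mutation is silting}) lets us bypass the isolated components of the tilting quiver through intermediate non-tilting silting objects. Concretely, BB and APR tilting mutations (Theorem \ref{prop:APR is APR}) provide explicit irreducible silting mutations, and when a tilting summand lacks an APR/BB neighbour we first perform a spread-increasing irreducible silting mutation at it, do mutations inside the enlarged spread, and then invoke the spread-reduction step above to return to concentrated form. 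The shift between $A[n]$ for different $n$ is traversed by iterating this procedure.

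The main obstacle is precisely the tilting-object connectivity within $\mathcal{H}[n]$ for representation-infinite hereditary or canonical algebras, where the tilting quiver is well-known to be disconnected (for example the isolated component of $DA$ over a representation-infinite hereditary algebra). The argument must exploit silting mutation -- always available, unlike tilting mutation -- to bridge these components. The technical heart is therefore to show, for each ``trapped'' tilting object $T$, that a suitable alternating sequence of spread-increasing and spread-decreasing irreducible silting mutations produces a path to a tilting object in the main component containing $A$; once there, the classical BGP / APR / BB machinery concludes.
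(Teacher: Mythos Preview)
Your strategy is fundamentally different from the paper's, and the gap you yourself flag is fatal as written. The paper does \emph{not} try to reduce to concentrated objects and then connect tilting objects. Instead it exploits the bridge to \emph{exceptional sequences}: every basic silting object in $\TT$ can be ordered into a full exceptional sequence (Proposition~\ref{silting to exceptional}), the braid group $B_n\times\Z^n$ acts transitively on full exceptional sequences (Theorem~\ref{exceptional transitivity}, due to Crawley-Boevey/Ringel/Kussin--Meltzer), and each braid generator $\sigma_i^{\pm1}$ can be realised, up to a shift, by a short chain of irreducible silting mutations (Lemma~\ref{exceptional mutation by silting mutation}). Together with Proposition~\ref{first transitivity} handling the $\Z^n$-shifts, this gives transitivity immediately. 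The heavy lifting is outsourced to the known transitivity of exceptional mutation.

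Your plan, by contrast, postpones all the difficulty to the last paragraph. Two concrete problems:
\begin{itemize}
\item The spread-reduction step is wrong as stated. If $X_i[n_i]$ has $n_i$ maximal, then $\Hom_\TT(X_i[n_i],X_j[n_j])\neq0$ forces $n_j\in\{n_i,n_i+1\}$, so the left $\add(M/X_i[n_i])$-approximation lands in $\mathcal H[n_i]$ and the cone lies in $\mathcal H[n_i]*\mathcal H[n_i+1]$. Thus $\mu^+$ at the top can only \emph{raise} the maximal degree; your sentence ``the maximal $n_i$ drops'' is backwards. One can fix this by mutating at the bottom or using right mutation, but you have not argued termination.
\item More seriously, even granting spread reduction, you arrive at ``connect all tilting objects in $\mathcal H$ by irreducible silting mutations''. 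You correctly note that tilting mutation alone fails here (e.g.\ $DA$ for representation-infinite hereditary $A$), and you propose to detour through higher spread and back. But this is exactly the original problem restated: you need to exhibit, for every pair of tilting objects, a specific silting path, and ``a suitable alternating sequence'' is not an argument. Nothing in your outline controls where such detours land, and Happel--Unger connectivity gives you nothing once you leave the tilting locus.
\end{itemize}
The exceptional-sequence route avoids both issues because braid transitivity is already established and each braid move is local.
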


The idea of our proof of Theorem \ref{hereditary case} is to compare silting objects
with exceptional sequences.

\emph{In the rest of this section let $\TT$ be a triangulated category satisfying
the following property.}

\begin{assumption}\label{total Hom-finite}
For any $X,Y\in\TT$ we have $\Hom_{\TT}(X,Y[\ell])=0$ for any $|\ell|\gg0$.
\end{assumption}

\begin{definition}
Let $\TT$ be a triangulated category.
We say that an object $X\in\TT$ is \emph{exceptional} if
$\Hom_{\TT}(X,X[\neq0])=0$ and $\End_{\TT}(X)$ is a division algebra. 

We say that a sequence $(X_1,\cdots,X_n)$ of exceptional objects in $\TT$
is an \emph{exceptional sequence} if
\[\Hom_{\TT}(X_i,X_j[\Z])=0\ \mbox{ for any }\ 1\le j<i\le n.\]
We say that an exceptional sequence is \emph{full} if
$\thick(\coprod_{i=1}^nX_i)=\TT$.
We denote by $\exc\TT$ the set of isomorphism classes of full exceptional sequences in $\TT$.
\end{definition}

Clearly $\Z^n$ acts on $\exc\TT$ by
\[(\ell_1,\cdots,\ell_n)(X_1,\cdots,X_n):=(X_1[\ell_1],\cdots,X_n[\ell_n]).\]
Let $B_n$ be the \emph{braid group} generated by $\sigma_1,\cdots,\sigma_{n-1}$
with relations
\begin{eqnarray*}
\sigma_i\sigma_j=\sigma_j\sigma_i&|i-j|>1,\\
\sigma_i\sigma_{i+1}\sigma_i=\sigma_{i+1}\sigma_i\sigma_{i+1}.&
\end{eqnarray*} 
Then $B_n$ acts on $\exc\TT$ \cite{GR} as follows:
For an exceptional sequence ${\mathbf X}:=(X_1,\cdots,X_n)$ and $1\le i<n$,
define objects $L_{X_{i+1}}X_i$ and $R_{X_i}X_{i+1}$ in $\TT$ by
\begin{eqnarray*}
\xymatrix{
X_i\ar[r]&\coprod_{\ell\in\Z}D\Hom_{\TT}(X_i,X_{i+1}[\ell])\otimes_kX_{i+1}[\ell]
\ar[r]&L_{X_{i+1}}X_i[1]\ar[r]&X_i[1]
}\\
\xymatrix{
X_{i+1}[-1]\ar[r]& R_{X_i}X_{i+1}\ar[r]&\coprod_{\ell\in\Z}\Hom_{\TT}(X_{i}[\ell],X_{i+1})\otimes_kX_i[\ell]\ar[r]&X_{i+1}
}\end{eqnarray*}
and put
\begin{eqnarray*}
\sigma_i^{-1}{\mathbf X}&:=&(X_1,\cdots,X_{i-1},X_{i+1},L_{X_{i+1}}X_i,X_{i+2},\cdots,X_n),\\
\sigma_i{\mathbf X}&:=&(X_1,\cdots,X_{i-1},R_{X_i}X_{i+1},X_{i+1},X_{i+2},\cdots,X_n).
\end{eqnarray*}
The following transitivity result of exceptional sequences is well-known.

\begin{theorem}\cite{C,Rin,KM}\label{exceptional transitivity}
Let $\TT=\K^{\rm b}(\proj{A})$ where $A$ is either a hereditary algebra
or a canonical algebra over a field $k$. Then $B_n\times\Z^n$ acts on $\exc\TT$ transitively.
\end{theorem}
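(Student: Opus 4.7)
The plan is to prove the two cases separately using a uniform strategy based on induction on $n$ and the method of perpendicular categories. The first step is to reduce the derived-category statement to one inside a hereditary abelian heart. For $A$ hereditary, every indecomposable object in $\D^b(\mod A)\simeq\K^{\rm b}(\proj A)$ is a shift of an indecomposable $A$-module; for a canonical algebra $A$, Geigle-Lenzing gives a triangle equivalence $\D^b(\mod A)\simeq\D^b(\operatorname{coh}\mathbb{X})$ where $\mathbb{X}$ is a weighted projective line, and $\operatorname{coh}\mathbb{X}$ is again a hereditary abelian category. Using the $\Z^n$-action coordinate-wise, any full exceptional sequence can be shifted so that each entry lies in the heart $\mathcal{H}$. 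Thus transitivity of $B_n\times\Z^n$ on $\exc\TT$ reduces to transitivity of $B_n$ on full exceptional sequences inside $\mathcal{H}$.

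Inside $\mathcal{H}$ the key inductive tool is the perpendicular category technique of Geigle-Lenzing and Schofield. For any exceptional object $X\in\mathcal{H}$, one shows that
\[X^{\perp}=\{Y\in\mathcal{H}\mid\Hom_{\mathcal{H}}(X,Y)=0=\Ext^1_{\mathcal{H}}(X,Y)\}\]
is again a hereditary abelian category whose Grothendieck group has rank $n-1$, and that every full exceptional sequence $(X_1,\ldots,X_n)$ in $\mathcal{H}$ restricts to a full exceptional sequence $(X_1,\ldots,X_{n-1})$ in $X_n^{\perp}$. The plan is then: given two full exceptional sequences $\mathbf{X}=(X_1,\ldots,X_n)$ and $\mathbf{Y}=(Y_1,\ldots,Y_n)$ in $\mathcal{H}$, use braid moves to bring them to a common last term $Z$, then apply the inductive hypothesis to the first $n-1$ entries inside $Z^{\perp}$. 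The base case $n=1$ is trivial and $n=2$ is handled directly: any two exceptional pairs generating the same rank-$2$ hereditary subcategory are related by a power of $\sigma_1$, which follows from a case analysis based on the Euler form.

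The main obstacle is the normalization step, namely showing that arbitrary full exceptional sequences can be braid-transformed to share a common final object. The idea is that the braid element $\sigma_{n-1}\sigma_{n-2}\cdots\sigma_i$ transports $X_i$ into the rightmost slot up to mutation by later terms, so one can effectively move any exceptional object of $\mathcal{H}$ into position $n$; combined with the fact that every exceptional object extends to some full exceptional sequence, this yields the desired normalization and completes the argument by induction. For the hereditary case this is the heart of Crawley-Boevey's and Ringel's proof and requires a careful analysis of how indecomposable exceptional modules sit inside $\mathcal{H}$. For the canonical case Kussin-Meltzer carry out the same strategy inside $\operatorname{coh}\mathbb{X}$, but substantial additional work is needed to handle the tubular families of exceptional torsion sheaves, where normal forms such as $(\mathcal{O}(-(n-2)\vec{c}),\ldots,\mathcal{O}(-\vec{c}),\mathcal{O})$ must be reached by carefully tracking the grading group of $\mathbb{X}$ under mutation. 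Once a common normal form is attained, transitivity of $B_n$ on full exceptional sequences follows immediately.
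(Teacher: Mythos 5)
First, note that the paper does not prove this statement at all: it is imported verbatim from the literature, with the proof delegated to the cited works of Crawley-Boevey and Ringel (hereditary case) and Kussin-Meltzer (canonical/weighted projective line case). So there is no "paper proof" to match; what you have written is an outline of the strategy of those references, and the preliminary reductions you describe are sound. Reducing from $\K^{\rm b}(\proj A)\simeq\D^{\rm b}(\mathcal{H})$ to the hereditary heart $\mathcal{H}$ via the $\Z^n$-action is fine (every indecomposable is a stalk complex, and for an exceptional pair in $\mathcal{H}$ at most one of $\Hom$, $\Ext^1$ is nonzero, so abelian and derived mutations agree up to shift), and the perpendicular-category induction, with $(X_1,\dots,X_{n-1})$ full in $X_n^{\perp}$ and $X_n^{\perp}$ again of the same type (module category of a hereditary algebra, respectively sheaves on a smaller exceptional curve or a hereditary module category), is exactly the framework of the cited proofs.

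The genuine gap is in the normalization step, which is precisely where all the difficulty of the theorem lives. The braid word $\sigma_{n-1}\cdots\sigma_i$ does move the object $X_i$ into the last slot, but only an object that already occurs in the given sequence can be transported this way. To compare two arbitrary full exceptional sequences $\mathbf{X}$ and $\mathbf{Y}$ you need to show that the $B_n$-orbit of $\mathbf{X}$ contains a sequence whose last term is, say, $Y_n$; the observation that "every exceptional object extends to some full exceptional sequence" does not give this, because the extending sequence is a priori unrelated to $\mathbf{X}$ -- asserting that it lies in the same orbit is the statement you are trying to prove, so as written the argument is circular. Closing this gap is the actual content of Crawley-Boevey's and Ringel's papers (a delicate induction using the rank-two transitivity lemma and a careful analysis of how a prescribed exceptional object can be reached by successive mutations), and in the canonical case Kussin-Meltzer must additionally control exceptional sheaves of finite length in the tubes. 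Your proposal correctly identifies where the work lies and which sources carry it out, but as a self-contained proof it does not establish the key step; as it stands it should be regarded as a roadmap to the cited references rather than a proof.
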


We shall deduce Theorem \ref{hereditary case} from Theorem \ref{exceptional transitivity}.

We have the following connection between silting objects and exceptional sequences,
asserting that any full exceptional sequence gives rise to a silting object.

\begin{proposition}\label{exceptional to silting}
Let ${\mathbf X}=(X_1,\cdots,X_n)$ be a full exceptional sequence in $\TT$.
Then there exists $a\in\Z$ such that $X_1[\ell_1]\oplus\cdots\oplus X_n[\ell_n]$
is a silting object for any integers $\ell_1\cdots,\ell_n\in\Z$
satisfying $\ell_i+a\le\ell_{i+1}$ for any $1\le i<n$.
\end{proposition}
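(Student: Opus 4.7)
The plan is to verify the two defining conditions for $M := \bigoplus_{i=1}^n X_i[\ell_i]$ to be a silting object in $\TT$: namely $\thick M = \TT$ and $\Hom_{\TT}(M, M[>0]) = 0$. The thickness is immediate since shifting summands does not alter the thick closure, giving $\thick M = \thick(X_1 \oplus \cdots \oplus X_n) = \TT$ by fullness of the exceptional sequence. All of the real work lies in the $\Hom$-vanishing.

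I would reduce that vanishing to showing $\Hom_{\TT}(X_i, X_j[\ell_j - \ell_i + k]) = 0$ for every $i, j \in \{1, \ldots, n\}$ and every $k > 0$, and then split into three cases. When $i = j$, the vanishing is simply exceptionality of $X_i$ applied to positive shifts. When $i > j$, the exceptional-sequence axiom already yields $\Hom_{\TT}(X_i, X_j[\Z]) = 0$, so the conclusion is automatic and imposes no constraint on the $\ell_i$. Both cases contribute no condition on the constant $a$.

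The only case that forces a constraint is $i < j$, and here Assumption \ref{total Hom-finite} is the key ingredient: for each such pair it produces an integer $a_{ij}$ with $\Hom_{\TT}(X_i, X_j[m]) = 0$ for all $m > a_{ij}$. I would then set $a := \max\bigl(1,\ 1 + \max_{i<j} a_{ij}\bigr)$ and exploit the telescoping $\ell_j - \ell_i = \sum_{s=i}^{j-1}(\ell_{s+1} - \ell_s) \ge (j-i)a \ge a$ to conclude $\ell_j - \ell_i + k > a_{ij}$ for every $k > 0$, which yields the desired vanishing.

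The argument is thus a direct combination of the definition of an exceptional sequence with the eventual $\Hom$-vanishing granted by Assumption \ref{total Hom-finite}. I do not expect any genuine obstacle; the only nontrivial point is the small bookkeeping step of extracting a single uniform $a$ from the finitely many bounds $a_{ij}$, which is clean precisely because the exceptional sequence has length $n < \infty$.
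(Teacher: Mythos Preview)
Your proof is correct and follows essentially the same approach as the paper: both invoke Assumption \ref{total Hom-finite} to obtain a uniform bound $a$ for the finitely many pairs $(i,j)$ with $i<j$, handle the cases $i=j$ and $i>j$ directly from the exceptional-sequence axioms, and use $\ell_j-\ell_i\ge a$ to conclude. The only cosmetic differences are that the paper fixes a single $a\ge 0$ at the outset rather than maximizing over pairwise bounds $a_{ij}$, and does not bother to spell out the $\thick M=\TT$ step.
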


\begin{proof}
By Assumption \ref{total Hom-finite}, there exists $a\ge0$
such that $\Hom_{\TT}(X_i,X_j[>a])=0$ and any $1\le i,j\le n$.
We shall show that $a$ satisfies the desired condition.

Let $\ell_1\cdots,\ell_n$ be integers satisfying $\ell_i+a\le\ell_{i+1}$ for any $i$.
Fix $1\le i<j\le n$. Since ${\mathbf X}\in\exc\TT$, we have
$\Hom_{\TT}(X_j[\ell_j],X_i[\ell_i+\ell])=0$ for any $\ell\in\Z$
and $\Hom_{\TT}(X_i[\ell_i],X_i[\ell_i+\ell])=0$ for any $\ell\neq0$.
On the other hand we have $\Hom_{\TT}(X_i[\ell_i],X_j[\ell_j+\ell])=0$
for any $\ell>0$ by $\ell_j+\ell-\ell_i>a$ and our choice of $a$.
Thus we have the assertion.
\end{proof}

We have the following easy observation.

\begin{lemma}\label{very silting}
Let ${\mathbf X}=(X_1,\cdots,X_n)$ be a full exceptional sequence in $\TT$
such that $M=X_1\oplus\cdots\oplus X_n$ is a silting object.
For any integers $\ell_1\le\cdots\le \ell_n$ we have the following.
\begin{itemize}
\item[(1)] $X_1[\ell_1]\oplus\cdots\oplus X_n[\ell_n]$ is a silting object.
\item[(2)] If $\ell_1\ge0$, then $X_1\oplus\cdots\oplus X_n$ and
$X_1[\ell_1]\oplus\cdots\oplus X_n[\ell_n]$ are transitive under
iterated irreducible silting mutation.
\end{itemize}
\end{lemma}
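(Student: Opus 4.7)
The plan for part (1) is a direct verification of the silting axioms for $N := X_1[\ell_1] \oplus \cdots \oplus X_n[\ell_n]$. The condition $\thick N = \TT$ is immediate from $\thick M = \TT$, since shifting does not change the thick closure. For the vanishing $\Hom_\TT(N, N[>0]) = 0$, I would split the computation of $\Hom_\TT(X_i[\ell_i], X_j[\ell_j + k])$ for $k > 0$ into three cases according to the ordering of $i, j$: when $i > j$ this vanishes by the exceptional sequence axiom $\Hom_\TT(X_i, X_j[\Z]) = 0$; when $i = j$ it equals $\Hom_\TT(X_i, X_i[k])$ which vanishes since $X_i$ is exceptional and $k \neq 0$; when $i < j$ the hypothesis $\ell_i \le \ell_j$ gives $\ell_j + k - \ell_i \ge k > 0$, and the group vanishes because $M$ is silting.

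For part (2), my plan is to argue by induction on $s := \sum_{i=1}^n \ell_i \ge 0$. The base case $s = 0$ forces all $\ell_i = 0$ and so $N = M$. For the inductive step, let $j$ be the smallest index with $\ell_j > 0$, and set
\[N' := X_1[\ell_1] \oplus \cdots \oplus X_j[\ell_j - 1] \oplus \cdots \oplus X_n[\ell_n].\]
The minimality of $j$ ensures $\ell_i = 0 \le \ell_j - 1$ for $i < j$, while $\ell_j - 1 < \ell_j \le \ell_{j+1}$; hence $N'$ still has non-decreasing, non-negative shifts with total $s - 1$. By part (1) and the induction hypothesis, $N'$ is silting and is connected to $M$ by iterated irreducible silting mutations.

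The central step is to identify $N$ as an irreducible left mutation of $N'$. I would verify that $\mu^-_{X_j[\ell_j]}(N) = N'$ by showing that the right $\MM_{X_j[\ell_j]}$-approximation of $X_j[\ell_j]$ is the zero morphism: for summands $X_i[\ell_i]$ with $i > j$ the exceptional sequence property gives $\Hom_\TT(X_i, X_j[\ell_j - \ell_i]) = 0$; for $i < j$ the minimality of $j$ forces $\ell_i = 0$, and then $\Hom_\TT(X_i, X_j[\ell_j]) = 0$ because $M$ is silting and $\ell_j > 0$. The associated triangle $X_j[\ell_j - 1] \to 0 \to X_j[\ell_j] \to X_j[\ell_j]$ then shows $\mu^-_{X_j[\ell_j]}(N) = N'$. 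By Proposition \ref{mutation basic}(b), this is equivalent to $\mu^+_{X_j[\ell_j - 1]}(N') = N$, which connects $N'$ to $N$ by a single irreducible left mutation and closes the induction.

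The main subtlety is the dual role played by the index $j$: it must be small enough that all $\ell_i$ with $i < j$ vanish (so the right approximation reduces to the zero map and the mutation simply shifts $X_j$) and yet decrementing $\ell_j$ must still preserve monotonicity of the shift sequence. Taking $j$ to be the smallest index with $\ell_j > 0$ is precisely the choice that balances these two requirements simultaneously.
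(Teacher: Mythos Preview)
Your proof is correct and follows essentially the same approach as the paper's: part (1) is a direct case-split verification, and part (2) is an induction on $\sum\ell_i$ where one shows that the right $\MM_{X_j[\ell_j]}$-approximation of $X_j[\ell_j]$ vanishes, so that an irreducible mutation decrements one shift. The only cosmetic difference is that the paper allows any index $i$ with $\ell_{i-1}<\ell_i$ (setting $\ell_0:=0$) rather than specifically the smallest one, and does not explicitly invoke Proposition~\ref{mutation basic} to pass from right to left mutation.
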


\begin{proof}
(1) Fix $i\le j$. Since ${\mathbf X}\in\exc\TT$, we have
$\Hom_{\TT}(X_j[\ell_j],X_i[\ell_i+\ell])=0$ for any $\ell\neq0$.
Since $X_1\oplus\cdots\oplus X_n$ is a silting object and $\ell_i\le\ell_j$,
we have $\Hom_{\TT}(X_i[\ell_i],X_j[\ell_j+\ell])=0$ for any $\ell>0$.
Thus we have the assertion.

(2) We use the induction on $\ell_1+\cdots+\ell_n$.
Let $\ell_0:=0$. Assume $\ell_{i-1}<\ell_i$ for some $1\le i\le n$.
Then $\Hom_{\TT}(X_j[\ell_j],X_i[\ell_i])=0$ for any $j>i$ since ${\mathbf X}\in\exc\TT$,
and also for any $j<i$ since $M\in\silt\TT$ and $\ell_i-\ell_j>0$.
Consequently we have $\Hom_{\TT}(X_j[\ell_j],X_i[\ell_i])=0$ for any $j\neq i$.
Thus we have
\[\mu_{X_i[\ell_i]}^-(X_1[\ell_1]\oplus\cdots\oplus X_n[\ell_n])=
X_1[\ell_1]\oplus\cdots X_i[\ell_i-1]\cdots\oplus X_n[\ell_n].\]
Thus the assertion follows inductively.
%
%
\end{proof}

We have the following transitivity result.

\begin{proposition}\label{first transitivity}
Let $(X_1,\cdots,X_n)$ be a full exceptional sequence in $\TT$
and $\ell_1,\cdots,\ell_n\in\Z$,
If $X_1\oplus\cdots\oplus X_n$ and
$X_1[\ell_1]\oplus\cdots\oplus X_n[\ell_n]$ are silting objects,
then they are transitive under iterated irreducible silting mutation.
\end{proposition}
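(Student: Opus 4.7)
The strategy is to reduce the statement to Lemma \ref{very silting}(2) by going through an intermediate silting object of the form $X_1[m_1]\oplus\cdots\oplus X_n[m_n]$ with shifts $m_1\le\cdots\le m_n$ chosen to be sufficiently large and sufficiently spread out. The key observation is that the shifted sequence $(X_1[\ell_1],\ldots,X_n[\ell_n])$ is again a full exceptional sequence, since shifts preserve both the exceptionality of each object and the vanishing $\Hom_{\TT}(X_i,X_j[\Z])=0$ for $i>j$.

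First I would choose integers $m_1,\ldots,m_n$ satisfying
\[0\le m_1\le m_2\le\cdots\le m_n,\qquad m_i\ge\ell_i\text{ for all }i,\qquad m_i-\ell_i\le m_{i+1}-\ell_{i+1}\text{ for all }i.\]
Such integers exist: take $C\ge\max_i|\ell_{i+1}-\ell_i|$ and $M\ge\max(0,\max_i(\ell_i-iC))$, and set $m_i:=M+iC$. Setting $k_i:=m_i-\ell_i$, we then have $0\le k_1\le\cdots\le k_n$.

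Next I would apply Lemma \ref{very silting}(2) twice. Since $X_1\oplus\cdots\oplus X_n$ is silting and $(X_1,\ldots,X_n)\in\exc\TT$, the choice $0\le m_1\le\cdots\le m_n$ yields that $X_1\oplus\cdots\oplus X_n$ and $X_1[m_1]\oplus\cdots\oplus X_n[m_n]$ are connected by iterated irreducible silting mutation. Analogously, since $X_1[\ell_1]\oplus\cdots\oplus X_n[\ell_n]$ is silting and $(X_1[\ell_1],\ldots,X_n[\ell_n])\in\exc\TT$, the choice $0\le k_1\le\cdots\le k_n$ yields that $X_1[\ell_1]\oplus\cdots\oplus X_n[\ell_n]$ and $X_1[\ell_1+k_1]\oplus\cdots\oplus X_n[\ell_n+k_n]=X_1[m_1]\oplus\cdots\oplus X_n[m_n]$ are connected by iterated irreducible silting mutation. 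Concatenating these two mutation sequences (reversing one of them) gives the desired transitivity.

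The only subtle point is to verify that Lemma \ref{very silting}(2) genuinely applies to the shifted sequence in the second application; this is automatic because the lemma's hypotheses are only that the starting object is silting and that the summands form a full exceptional sequence, both of which hold for $(X_1[\ell_1],\ldots,X_n[\ell_n])$ by assumption and by the shift-invariance of the exceptional property. Thus no extra work is required beyond the combinatorial choice of the $m_i$, which is the only mildly technical step.
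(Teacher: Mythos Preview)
Your proof is correct and follows essentially the same approach as the paper: choose integers $m_i$ with $0\le m_1\le\cdots\le m_n$ and $0\le m_1-\ell_1\le\cdots\le m_n-\ell_n$, then apply Lemma~\ref{very silting}(2) once to connect $X_1\oplus\cdots\oplus X_n$ to $X_1[m_1]\oplus\cdots\oplus X_n[m_n]$, and once more (using that $(X_1[\ell_1],\ldots,X_n[\ell_n])$ is again a full exceptional sequence) to connect $X_1[\ell_1]\oplus\cdots\oplus X_n[\ell_n]$ to the same intermediate object. Your version is slightly more detailed in that you give an explicit construction of the $m_i$ and spell out why the shifted sequence remains exceptional, but the argument is the same.
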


\begin{proof}
Choose integers $0\le m_1\le\cdots\le m_n$ satisfying
$0\le m_1-\ell_1\le\cdots\le m_n-\ell_n$.
By Lemma \ref{very silting} we have that
$X_1[m_1]\oplus\cdots\oplus X_n[m_n]$ is an iterated irreducible silting mutation of $X_1\oplus\cdots\oplus X_n$
(respectively, $X_1[\ell_1]\oplus\cdots\oplus X_n[\ell_n]$).
Thus we have the assertion.
\end{proof}

\emph{In the rest of this section let $\TT=\K^{\rm b}(\proj{A})$
where $A$ is either a hereditary algebra or a canonical algebra over a field.}
We can identify $\TT$ with $\D^{\rm b}(\AA)$ for a hereditary abelian
category $\AA$.
Any indecomposable object in $\TT$ is isomorphic to $X[\ell]$ for some
$X\in\AA$ and $\ell\in\Z$.
Moreover $\TT$ satisfies Assumption \ref{total Hom-finite}.

Let us start with the following preliminary results.

\begin{lemma}\label{preliminaries}\cite[IV.1.2, IV.1.5]{H}
\begin{itemize}
\item[(1)] Let $X,Y\in\AA$ be indecomposable objects satisfying $\Ext^1_{\AA}(Y,X)=0$.
Then any non-zero morphism $f\in\Hom_{\AA}(X,Y)$ is either injective or surjective.
\item[(2)] Let $X,Y\in\AA$ be exceptional objects.
If $\Hom_{\AA}(X,Y)\neq0$ and $\Ext^1_{\AA}(X,Y)\neq0$, then we have $\Ext^1_{\AA}(Y,X)\neq0$.
\end{itemize}
\end{lemma}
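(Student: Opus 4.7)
The plan is to prove (1) by a short-exact-sequence analysis that exploits the vanishing $\Ext^2_\AA=0$ forced by the hereditary hypothesis, and then to derive (2) by contraposition using (1) together with the $\Ext^1_\AA=0$ vanishings supplied by exceptionality.

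For part (1), let $f\colon X\to Y$ be a nonzero morphism with $X,Y$ indecomposable and $\Ext^1_\AA(Y,X)=0$. Set $K=\ker f$, $I=\operatorname{im} f$, $C=\operatorname{coker} f$, giving short exact sequences
\[
(*)\colon 0\to K\xrightarrow{i} X\xrightarrow{p} I\to 0,\qquad
(**)\colon 0\to I\xrightarrow{m} Y\xrightarrow{q} C\to 0.
\]
Assume for contradiction that both $K\neq 0$ and $C\neq 0$. Applying $\Hom_\AA(-,X)$ to $(**)$, and using $\Ext^1_\AA(Y,X)=0$ together with $\Ext^2_\AA(C,X)=0$, one obtains $\Ext^1_\AA(I,X)=0$. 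Then $\Hom_\AA(-,X)$ applied to $(*)$ shows that $i^*\colon\End_\AA(X)\to\Hom_\AA(K,X)$ is surjective, so the inclusion $i$ extends to some $h\in\End_\AA(X)$ with $h\circ i=i$. Indecomposability of $X$ makes $\End_\AA(X)$ local; iterating $h^n\circ i=i$ rules out $h$ being nilpotent (since $K\neq 0$ forces $i\neq 0$), so $h$ is an automorphism.

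By the dual computation using $\Hom_\AA(Y,-)$ on $(*)$ and $(**)$, one also obtains $\Ext^1_\AA(Y,I)=0$ and an automorphism $g\in\End_\AA(Y)$ with $q\circ g=q$. Writing $h-1_X=s\circ p$ for some $s\colon I\to X$ and $g-1_Y=m\circ t$ for some $t\colon Y\to I$, using that $h$ and $g$ are automorphisms one verifies that both $1_I+p\circ s$ and $1_I+t\circ m$ are isomorphisms of $I$. The main obstacle is to convert this package into a genuine splitting of $(*)$ or $(**)$: combining $s$ and $t$ with the inverses of $1_I+p\circ s$ and $1_I+t\circ m$ produces either a retraction $r\colon X\to K$ of $i$ or a section of $q$, whence $X\simeq K\oplus I$ or $Y\simeq I\oplus C$. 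This contradicts the indecomposability of $X$ (since $I\neq 0$) or of $Y$ (since $C\neq 0$).

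For part (2), I argue the contrapositive: assume $\Ext^1_\AA(Y,X)=0$ and $\Hom_\AA(X,Y)\neq 0$, and I will show $\Ext^1_\AA(X,Y)=0$. Pick any $0\neq f\in\Hom_\AA(X,Y)$; by part (1), $f$ is injective or surjective. If $f$ is an isomorphism, then $\Ext^1_\AA(X,Y)\cong\Ext^1_\AA(X,X)=0$ by exceptionality of $X$. If $f$ is injective with cokernel $C$, applying $\Hom_\AA(-,Y)$ to $0\to X\to Y\to C\to 0$ gives the piece
\[
\Ext^1_\AA(Y,Y)\to\Ext^1_\AA(X,Y)\to\Ext^2_\AA(C,Y),
\]
with both outer terms zero (by exceptionality of $Y$ and by hereditary, respectively), forcing $\Ext^1_\AA(X,Y)=0$. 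If $f$ is surjective with kernel $K$, the symmetric argument applied to $\Hom_\AA(X,-)$ on $0\to K\to X\to Y\to 0$, combined with $\Ext^1_\AA(X,X)=0$ and $\Ext^2_\AA(X,K)=0$, again yields $\Ext^1_\AA(X,Y)=0$. In every case $\Ext^1_\AA(X,Y)=0$, completing the contrapositive.

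The bottleneck is clearly the final step of part (1)—converting the two automorphisms $h$ and $g$ into an honest splitting of $(*)$ or $(**)$. Everything else is a mechanical long-exact-sequence chase made possible by the hereditary vanishing of $\Ext^2_\AA$.
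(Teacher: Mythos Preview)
The paper does not supply a proof of this lemma; it is cited from Happel's book, so there is no argument in the paper to compare against. I will therefore just assess your proof.

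Your proof of (2) is correct: the contrapositive via (1) together with the vanishings $\Ext^1_\AA(X,X)=\Ext^1_\AA(Y,Y)=0$ and $\Ext^2_\AA=0$ goes through cleanly.

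Your proof of (1) has a genuine gap, precisely at the step you yourself flag as the bottleneck. The surjectivity of $i^*:\End_\AA(X)\to\Hom_\AA(K,X)$ is indeed equivalent to $\Ext^1_\AA(I,X)=0$, but using it only to pick some $h$ with $h\circ i=i$ is vacuous: $h=1_X$ already satisfies this, and then $s=0$; dually $g=1_Y$ gives $t=0$. With $s=t=0$ there is nothing to combine, and you provide no mechanism forcing a nontrivial choice. In fact the statement you actually establish---that \emph{every} $h$ with $h i=i$ is an automorphism---is, in the Hom-finite local setting, just a reformulation of the non-splitting of $(*)$, which already follows from indecomposability of $X$ and leads to no contradiction.

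A correct route uses the two vanishings $\Ext^1_\AA(I,X)=0$ and $\Ext^1_\AA(Y,I)=0$ through the connecting maps rather than through $i^*$ and $q_*$. From the long exact sequences of $\Hom_\AA(I,-)$ on $(*)$ and $\Hom_\AA(-,I)$ on $(**)$, combined with the surjection $\Ext^1_\AA(Y,K)\twoheadrightarrow\Ext^1_\AA(I,K)$, one finds that the images of $p_*:\Hom_\AA(I,X)\to\End_\AA(I)$ and $m^*:\Hom_\AA(Y,I)\to\End_\AA(I)$ together span $\End_\AA(I)$. Writing $1_I=p\beta+\alpha m$ with $\beta:I\to X$ and $\alpha:Y\to I$, the endomorphism $\beta p$ of $X$ kills $K\neq 0$, hence lies in the radical of the local ring $\End_\AA(X)$; thus $1_X-\beta p$ is an automorphism, and its induced map on $I=X/K$ is $1_I-p\beta=\alpha m$. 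Therefore $\alpha m\in\operatorname{Aut}(I)$ and $(\alpha m)^{-1}\alpha$ is a retraction of $m$, so $(**)$ splits, contradicting the indecomposability of $Y$.
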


We have the following result asserting that $\RHom_{\AA}(X,Y)$ is a stalk complex.

\begin{lemma}\label{only one extension}
Let $X,Y\in\TT$ be indecomposable objects. If $M:=X\oplus Y$ satisfies
$\Hom_{\TT}(M,M[>0])=0$,
then there exists at most one $\ell\in\Z$ such that $\Hom_{\TT}(X,Y[\ell])\neq0$.
\end{lemma}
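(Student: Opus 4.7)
The plan is to reduce to the structure $\TT = \D^{\rm b}(\AA)$ with $\AA$ hereditary, then leverage Lemma \ref{preliminaries}. Since every indecomposable object is a shift of an object in $\AA$, write $X = X_0[i]$ and $Y = Y_0[j]$ with $X_0, Y_0 \in \AA$ indecomposable. Hereditarity of $\AA$ gives
\[\Hom_{\TT}(X, Y[\ell]) \;=\; \Ext^{\,j-i+\ell}_{\AA}(X_0, Y_0),\]
which can be nonzero only when $j-i+\ell \in \{0,1\}$, i.e.\ $\ell \in \{i-j,\; i-j+1\}$. So I only need to rule out that \emph{both} of these two shifts give nonzero $\Hom$.

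Assume for contradiction that $\Hom_{\AA}(X_0,Y_0) \neq 0$ and $\Ext^1_{\AA}(X_0,Y_0) \neq 0$. I unpack the vanishing hypothesis $\Hom_{\TT}(M, M[>0])=0$ in four pieces. First, applied to $X$ and to $Y$ separately, it yields $\Ext^1_{\AA}(X_0,X_0)=0$ and $\Ext^1_{\AA}(Y_0,Y_0)=0$, so both $X_0$ and $Y_0$ are indecomposable rigid objects in the hereditary category $\AA$; for the hereditary and canonical cases at hand this forces $\End_{\AA}(X_0)$ and $\End_{\AA}(Y_0)$ to be division algebras, so $X_0, Y_0$ are \emph{exceptional}. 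Second, the condition $\Hom_{\TT}(X, Y[k])=0$ for $k>0$ reads $\Ext^{j-i+k}_{\AA}(X_0,Y_0)=0$ for all $k \geq 1$; the non-vanishing of $\Hom_{\AA}(X_0,Y_0)$ forbids $j-i+k=0$ for some $k\geq 1$, hence $j \geq i$, and the non-vanishing of $\Ext^1_{\AA}(X_0,Y_0)$ forbids $j-i+k=1$ for some $k\geq 1$, hence $j \geq i+1$. Third, the condition $\Hom_{\TT}(Y, X[k])=0$ for $k>0$ translates to $\Ext^{i-j+k}_{\AA}(Y_0, X_0)=0$ for all $k \geq 1$; since $i-j+1 \leq 0$, this range covers $m=0$ and $m=1$, giving $\Hom_{\AA}(Y_0,X_0)=0$ and $\Ext^1_{\AA}(Y_0,X_0)=0$.

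Now apply Lemma \ref{preliminaries}(2) to the exceptional pair $X_0, Y_0$: since $\Hom_{\AA}(X_0,Y_0) \neq 0$ and $\Ext^1_{\AA}(X_0,Y_0) \neq 0$, the lemma concludes $\Ext^1_{\AA}(Y_0,X_0) \neq 0$, directly contradicting what we derived in the previous step. This contradiction completes the proof.

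The main obstacle is the passage from indecomposable rigid to exceptional (i.e.\ verifying that $\End_{\AA}(X_0)$ is a skew field), which is needed to invoke Lemma \ref{preliminaries}(2); this is standard for modules over a hereditary algebra and for coherent sheaves on a weighted projective line (the canonical case), and can be proved directly by observing that a nonzero non-invertible endomorphism of an indecomposable $X_0 \in \AA$ would yield a non-split short exact sequence contributing to $\Ext^1_{\AA}(X_0,X_0)$, contradicting rigidity.
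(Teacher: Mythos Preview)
Your argument is correct and follows essentially the same route as the paper's: reduce to objects of $\AA$ via shifts, use hereditarity to restrict to two candidate values of $\ell$, and then derive a contradiction from Lemma~\ref{preliminaries}(2) together with the vanishing hypothesis. The paper packages the contradiction as two incompatible inequalities on the shift $a$ (namely $a<0$ and $a>0$), whereas you phrase it as $\Ext^1_{\AA}(Y_0,X_0)=0$ versus $\Ext^1_{\AA}(Y_0,X_0)\neq 0$; these are the same statement unwound.

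One small correction to your final paragraph: your sketched reason for ``rigid indecomposable $\Rightarrow$ exceptional'' is not quite right. A non-invertible monomorphism $f\colon X_0\to X_0$ yields a short exact sequence representing a class in $\Ext^1_{\AA}(\mathrm{coker}\,f, X_0)$, not in $\Ext^1_{\AA}(X_0,X_0)$. The standard argument is rather to apply Lemma~\ref{preliminaries}(1) with $X=Y=X_0$: rigidity forces every nonzero endomorphism to be mono or epi, and then finiteness (finite $k$-dimension in $\modu A$, or comparison of rank and degree for coherent sheaves on a weighted projective line) upgrades this to an isomorphism. The paper itself applies Lemma~\ref{preliminaries}(2) without pausing over this point, so your explicit attention to it is welcome---just fix the justification.
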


\begin{proof}
Without loss of generality, we can assume that $X$ and $Z:=Y[a]$ belong to $\AA$.
Since $\AA$ is hereditary, we have $\Hom_{\TT}(X,Z[\neq0,1])=0$.
Assume that $\Hom_{\TT}(X,Z)\neq0$ and $\Hom_{\TT}(X,Z[1])\neq0$.
By Lemma \ref{preliminaries}(2) we have $\Hom_{\TT}(Z,X[1])\neq0$.

Since $\Hom_{\TT}(X,Z[1])\neq0$, we have $\Hom_{\TT}(M,M[1+a])\neq0$.
Thus we have $a<0$.
On the other hand, since $\Hom_{\TT}(Z,X[1])\neq0$, we have
$\Hom_{\TT}(M[a],M[1])\neq0$. Thus we have $a>0$, a contradiction.
\end{proof}

We have the following information about existence of $\ell$ in Lemma \ref{only one extension}.

\begin{lemma}\label{no cycle}
Let $X_{n+1}=X_1,\cdots,X_n\in\TT$ be pairwise non-isomorphic indecomposable objects.
Assume that the following conditions are satisfied.
\begin{itemize}
\item[(i)] $M:=X_1\oplus\cdots\oplus X_n$ satisfies $\Hom_{\TT}(M,M[>0])=0$.
\item[(ii)] There exist integers $\ell_1,\cdots,\ell_n\in\Z$ such that
$\Hom_{\TT}(X_i,X_{i+1}[\ell_i])\neq0$ for any $1\le i\le n$.
\end{itemize}
Then we have $n=1$.
\end{lemma}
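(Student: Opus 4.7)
The plan is to reduce the cyclic $\Hom$ condition to one inside the hereditary heart $\AA$ and then use rigidity together with Lemma \ref{preliminaries}(1) to rule out cycles.

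First, writing each $X_i = Y_i[m_i]$ with $Y_i \in \AA$ (available since $\AA$ is hereditary, so every indecomposable of $\TT$ is a shift of an object of $\AA$), condition (ii) reads $\Hom_\TT(Y_i, Y_{i+1}[d_i]) \neq 0$ with $d_i := m_{i+1} - m_i + \ell_i$. Heredity forces $d_i \in \{0,1\}$; presilting forces $\ell_i \le 0$. The telescoping identity $\sum_i d_i = \sum_i \ell_i$, combined with $d_i \ge 0$ and $\sum_i \ell_i \le 0$, forces $d_i = \ell_i = 0$ for every $i$. Thus all $m_i$ coincide and, after a global shift, we may assume $X_i = Y_i \in \AA$, with non-zero morphisms $f_i : Y_i \to Y_{i+1}$ in $\AA$ and $\Ext^1_\AA(Y_i, Y_j) = 0$ for all $i,j$. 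Applying Lemma \ref{preliminaries}(1) with $\Ext^1(Y_{i+1}, Y_i) = 0$, each $f_i$ is injective or surjective (and not an isomorphism, since $Y_i \not\cong Y_{i+1}$).

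Next, I claim the pattern ``surjection immediately followed by injection'' never appears in the cyclic sequence. Indeed, if $f_i$ is surjective and $f_{i+1}$ is injective, then
\[\ker(f_{i+1} f_i) = \ker f_i \neq 0, \qquad \operatorname{im}(f_{i+1} f_i) = \operatorname{im} f_{i+1} \subsetneq Y_{i+2},\]
so $f_{i+1} f_i : Y_i \to Y_{i+2}$ is non-zero but neither injective nor surjective, contradicting Lemma \ref{preliminaries}(1) applied to $f_{i+1} f_i$ (which uses $\Ext^1(Y_{i+2}, Y_i) = 0$). Cyclically this forbids any transition from surjective to injective, so either all $f_i$ are injective or all are surjective.

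In the all-injective case, the composition $\phi = f_n \cdots f_1 \in \End_\AA(Y_1)$ is a composition of injections, hence injective. Since $\End(Y_1)$ is a finite-dimensional local $k$-algebra, every non-unit is nilpotent; but a non-zero nilpotent injective endomorphism of $Y_1 \neq 0$ is impossible (some power would be both injective and zero). Hence $\phi$ is an isomorphism, so $f_1$ is a split monomorphism, and $Y_1$ is a direct summand of the indecomposable $Y_2$; this forces $Y_1 \cong Y_2$, contradicting pairwise non-isomorphism. The all-surjective case is dual. The most delicate step is the ``no surj$\to$inj'' argument of the second paragraph, as it requires a second, slightly unexpected application of Lemma \ref{preliminaries}(1) to the composed morphism; the rest is bookkeeping with the shifts $m_i$ and the local ring structure of $\End(Y_1)$.
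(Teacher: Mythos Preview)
Your proof is correct and follows essentially the same route as the paper's: the telescoping argument to force all $X_i$ into $\AA$, the use of Lemma \ref{preliminaries}(1) on both the $f_i$ and the composites $f_{i+1}f_i$ to rule out a surjection-then-injection pattern, and the final contradiction via an injective (or surjective) non-isomorphism endomorphism of $X_1$ are all exactly what the paper does. The only difference is cosmetic: you spell out the nilpotence argument in $\End_\AA(Y_1)$ explicitly, whereas the paper simply asserts that a non-isomorphism endomorphism which is injective or surjective is a contradiction.
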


\begin{proof}
By (i) we have $\ell_i\le0$ for any $i$.
We take $a_i\in\Z$ such that $X_i\in\AA[a_i]$.
Since $\Hom_{\TT}(X_i,X_{i+1}[\ell_i])\neq0$, we have
$\Hom_{\TT}(\AA[a_i],\AA[a_{i+1}+\ell_i])\neq0$.
Thus $a_{i+1}-a_i+\ell_i$ is either $0$ or $1$. We have
\[0\ge \sum_{i=1}^n\ell_i=\sum_{i=1}^n(a_{i+1}-a_i+\ell_i)
=\sum_{i=1}^n(0\ \mbox{ or }\ 1),\]
which implies $\ell_1=\cdots=\ell_n=0$ and $a_1=\cdots=a_n$.
Thus we can assume that each $X_i$ belongs to $\AA$.

Assume $n>1$ and take a non-zero morphism $f_i:X_i\to X_{i+1}$ for any $1\le i\le n$.
Let $f_{n+1}:=f_1$. Then $f_i$ is not an isomorphism and either injective or surjective
by Proposition \ref{preliminaries}(1).
Also each $f_{i+1}f_i$ is either injective or surjective again by Proposition \ref{preliminaries}(1).
So it is impossible that $f_i$ is surjective and $f_{i+1}$ is injective at the same time.
It is easy to conclude that either all $f_i$ are injective or all $f_i$ are surjective.
Thus $f_n\cdots f_1:X_1\to X_1$ is not an isomorphism and either injective or surjective.
This is a contradiction.
\end{proof}

Now we show a certain converse of Proposition \ref{exceptional to silting}
asserting that any silting object gives rise to a full exceptional sequence.
It is also possible to show this observation by applying \cite[Theorem A]{AST}.

\begin{proposition}\label{silting to exceptional}
Let $M=X_1\oplus\cdots\oplus X_n$ be a basic silting object in $\TT$.
Then we can change indices of $X_1,\cdots,X_n$ such that
$(X_1,\cdots,X_n)$ is a full exceptional sequence in $\TT$.
\end{proposition}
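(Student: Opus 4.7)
The plan is to manufacture the ordering from an acyclic ``Hom-relation'' on the summands of $M$, and to check exceptionality summand by summand using that $\AA$ is hereditary.

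First, I would verify that each summand $X_i$ is an exceptional object of $\TT$. Since $M$ is basic silting in a Krull--Schmidt category, each $X_i$ is indecomposable with local endomorphism ring, so $\End_\TT(X_i)$ is local and $\Hom_\TT(X_i,X_i[>0])=0$ from the silting condition. Writing $X_i\simeq Y_i[a_i]$ with $Y_i\in\AA$, the hereditarity of $\AA$ gives $\Hom_\TT(X_i,X_i[\ell])=\Ext^\ell_\AA(Y_i,Y_i)=0$ for all $\ell<0$ as well, so $\Hom_\TT(X_i,X_i[\neq 0])=0$. The remaining point is that $\End_\TT(X_i)=\End_\AA(Y_i)$ is a division algebra; this is the classical fact that an indecomposable rigid object in a hereditary abelian category of the form $\modu{H}$ ($H$ finite-dimensional hereditary) or $\operatorname{coh}\mathbb{X}$ (weighted projective line, which covers the canonical case after derived equivalence) has division endomorphism ring.

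Next, define a relation $\prec$ on $\{X_1,\dots,X_n\}$ by declaring $X_i\prec X_j$ (for $i\neq j$) whenever $\Hom_\TT(X_i,X_j[\Z])\neq 0$. The crucial point is that $\prec$ is acyclic: a cycle $X_{i_1}\prec X_{i_2}\prec\cdots\prec X_{i_k}\prec X_{i_1}$ with $k\geq 2$ and pairwise distinct indices would supply integers $\ell_1,\dots,\ell_k\in\Z$ with $\Hom_\TT(X_{i_s},X_{i_{s+1}}[\ell_s])\neq 0$, contradicting Lemma \ref{no cycle} (whose hypothesis (i) is given by $M\in\silt\TT$). Hence $\prec$ extends to a total order on the set of summands.

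Then I would reindex $X_1,\dots,X_n$ along a linear extension of $\prec$, so that $X_i\prec X_j$ forces $i<j$. Contrapositively, $1\le j<i\le n$ implies $X_i\not\prec X_j$, that is, $\Hom_\TT(X_i,X_j[\Z])=0$, which is precisely the exceptional-sequence vanishing. Combined with the exceptionality of each $X_i$ established above, the tuple $(X_1,\dots,X_n)$ is an exceptional sequence, and it is full because $\thick(\coprod_i X_i)=\thick(\add M)=\TT$ by the silting assumption.

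The main obstacle, as sketched, is the division-algebra property of $\End_\TT(X_i)$; everything else (acyclicity of $\prec$, existence of a linear extension, fullness) is essentially formal once Lemmas \ref{only one extension} and \ref{no cycle} are in hand. I would therefore be careful to invoke the appropriate structural result for hereditary and canonical algebras to conclude that each exceptional summand has division endomorphism ring, and then the reordering argument delivers the full exceptional sequence cleanly.
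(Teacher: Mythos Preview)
Your proposal is correct and follows essentially the same approach as the paper: define a Hom-relation on the summands, invoke Lemma~\ref{no cycle} to rule out cycles, and reindex along a linear extension. The only cosmetic difference is that the paper takes the transitive closure of your $\prec$ to get an honest partial order, whereas you work directly with the one-step relation and topologically sort; these amount to the same thing. You are in fact more careful than the paper on one point: the paper's proof does not explicitly verify that each $X_i$ is exceptional (in particular that $\End_\TT(X_i)$ is a division algebra), leaving this implicit, while you correctly flag it and indicate the classical source (indecomposable rigid objects in $\modu H$ or $\operatorname{coh}\mathbb{X}$ have division endomorphism rings, e.g.\ Happel--Ringel).
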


\begin{proof}
For each $1\le i,j\le n$, we write $X_i\le X_j$ if there exists a sequence
$X_i=X_{i_1},X_{i_2},\cdots,X_{i_m}=X_j$ such that
$\Hom_{\TT}(X_{i_a},X_{i_{a+1}}[\ell_a])\neq0$ for some $\ell_a\in\Z$.
Clearly we have $X_i\le X_i$ and that $X_i\le X_j\le X_k$ implies $X_i\le X_k$.
By Lemma \ref{no cycle} we have that $X_i\le X_j\le X_i$ implies $i=j$.
This means that $X_1,\cdots,X_n$ forms a partially ordered set.
Thus we can change indices of $X_1,\cdots,X_n$ such that $X_i\le X_j$ implies $i\le j$.
Then $(X_1,\cdots,X_n)$ forms an exceptional sequence.
\end{proof}

For an exceptional sequence ${\mathbf X}=(X_1,\cdots,X_n)$, we let
\[[[{\mathbf X}]]:=X_1\oplus\cdots\oplus X_n\in\TT.\]
The following is a main inductive step in the proof of Theorem \ref{hereditary case}.

\begin{lemma}\label{exceptional mutation by silting mutation}
Let ${\mathbf X}=(X_1,\cdots,X_n)$ be a full exceptional sequence in $\TT$
such that $[[{\mathbf X}]]$ is a silting object.
For any $1\le i<n$, there exists $\mbox{\boldmath $\ell$}\in\Z^n$ (respectively, $\mbox{\boldmath $m$}\in\Z^n$) such that
$[[(\sigma_i,\mbox{\boldmath $\ell $}){\mathbf X}]]$ (respectively, $[[(\sigma_i^{-1},\mbox{\boldmath $m$}){\mathbf X}]]$)
is a iterated irreducible silting mutation of $[[{\mathbf X}]]$.
\end{lemma}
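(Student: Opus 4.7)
The plan is to combine Proposition \ref{first transitivity} with a single well-chosen irreducible silting mutation whose approximation triangle coincides with the braid triangle defining $L_{X_{i+1}}X_i$ (respectively $R_{X_i}X_{i+1}$). I treat the $\sigma_i^{-1}$ case; the case of $\sigma_i$ is dual, using right mutation $\mu^-$ in place of $\mu^+$.

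By Lemma \ref{only one extension} applied to each pair $(X_j,X_k)$ with $j<k$, there is at most one integer $\ell_{j,k}$ with $\Hom_{\TT}(X_j,X_k[\ell_{j,k}])\neq 0$, and whenever defined $\ell_{j,k}\le 0$ because $[[\mathbf{X}]]$ is silting; Assumption \ref{total Hom-finite} ensures that the set of such $\ell_{j,k}$ is finite. Put $\ell:=\ell_{i,i+1}$, taking $\ell:=0$ if no such extension exists. Choose integers $S$ and $T$ larger than every $|\ell_{j,k}|$ (with room to spare), and define $\boldsymbol{\ell}'\in\Z^n$ by
\[
\ell_j'=0\ (j<i),\qquad \ell_i'=S,\qquad \ell_{i+1}'=S+\ell,\qquad \ell_j'=S+T\ (j>i+1).
\]
A direct case analysis of the silting inequality $\ell_{j,k}\le\ell_k'-\ell_j'$ for each pair $j<k$ shows that $M':=X_1[\ell_1']\oplus\cdots\oplus X_n[\ell_n']$ is silting: the only instance of equality is the forced one $\ell_{i+1}'-\ell_i'=\ell$, and every other inequality is strict thanks to the size of $S$ and $T$. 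By Proposition \ref{first transitivity}, $[[\mathbf{X}]]$ and $M'$ are connected by iterated irreducible silting mutation.

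Next, compute the left $\MM_{X_i[\ell_i']}$-approximation of $X_i[\ell_i']$ inside $M'$: maps to $X_j[\ell_j']$ vanish for $j<i$ by the exceptional-sequence condition, and vanish for $j>i+1$ because $T\neq\ell_{i,j}$. Hence the approximation factors through $\add(X_{i+1}[\ell_{i+1}'])$ alone, and is precisely the canonical evaluation
\[
X_i[\ell_i']\longrightarrow D\Hom_{\TT}(X_i,X_{i+1}[\ell])\otimes_k X_{i+1}[\ell_{i+1}'],
\]
which is the first morphism, shifted by $\ell_i'$, in the triangle defining $L_{X_{i+1}}X_i$. Consequently $\mu^+_{X_i[\ell_i']}(M')$ replaces $X_i[\ell_i']$ by $L_{X_{i+1}}X_i[1+\ell_i']$. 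Setting
\[
m_j:=\ell_j'\ (j\neq i,i+1),\qquad m_i:=\ell_{i+1}',\qquad m_{i+1}:=1+\ell_i',
\]
we obtain $\mu^+_{X_i[\ell_i']}(M')=[[(\sigma_i^{-1},\boldsymbol{m})\mathbf{X}]]$, which is therefore an iterated irreducible silting mutation of $[[\mathbf{X}]]$. The $\sigma_i$ case is entirely dual: one arranges shifts so that the right $\MM_{X_{i+1}[\ell_{i+1}']}$-approximation of $X_{i+1}[\ell_{i+1}']$ factors only through $X_i[\ell_i']$, and then applies $\mu^-_{X_{i+1}[\ell_{i+1}']}$ to produce the required shift of $R_{X_i}X_{i+1}$.

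The main obstacle is constructing the intermediate silting object $M'$: the forced equality $\ell_{i+1}'-\ell_i'=\ell$ must coexist with $\ell_{j,i+1}\le\ell_{i+1}'-\ell_j'=S+\ell$ for each $j<i$ (which needs $S$ large relative to $|\ell|$) and with the strict inequalities $\ell_{i,j}<T$ for $j>i+1$ needed to isolate the approximation. Finiteness of $\{\ell_{j,k}\}$ coming from Assumption \ref{total Hom-finite} is what makes all these constraints simultaneously satisfiable.
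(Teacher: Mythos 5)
Your proposal is correct and follows essentially the same strategy as the paper: use Lemma \ref{only one extension} and Assumption \ref{total Hom-finite} to choose shifts that isolate the pair $(X_i,X_{i+1})$, connect $[[{\mathbf X}]]$ to the shifted silting object via Lemma \ref{very silting}/Proposition \ref{first transitivity}, and then realize the braid triangle as the approximation triangle of an irreducible silting mutation. The only differences are cosmetic: you push the terms from position $i$ onward upward and obtain $\sigma_i^{-1}$ (i.e. $L_{X_{i+1}}X_i$) by a single left mutation at $X_i[\ell_i']$, treating $\sigma_i$ dually, whereas the paper shifts the first $i-1$ terms downward and obtains $\sigma_i$ (i.e. $R_{X_i}X_{i+1}$) by $-a+1$ successive irreducible right mutations at $X_{i+1}$, leaving $\sigma_i^{-1}$ as the dual case.
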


\begin{proof}
By Lemma \ref{only one extension} there exists at most one
$a\in\Z$ such that $\Hom_{\TT}(X_i,X_{i+1}[a])\neq0$.
Then $a$ must be non-positive since $[[{\mathbf X}]]$ is silting.
If such $a$ does not exist, we let $a:=0$.

By Assumption \ref{total Hom-finite} there exists $b\le0$ such that
\begin{eqnarray}\label{vanishing 1}
\Hom_{\TT}(X_j[b],X_{i+1}[\ell])=0&\mbox{ for any }\ 1\le j<i\ \mbox{ and }\ a\le\ell\le0.
\end{eqnarray}
By Lemma \ref{very silting} we have an exceptional sequence
\[{\mathbf Y}:=(X_1[b],\cdots,X_{i-1}[b],X_i,X_{i+1},X_{i+2},\cdots,X_n)\]
such that $[[{\mathbf Y}]]$ is an iterated irreducible silting mutation of $[[{\mathbf X}]]$.
Let
\begin{eqnarray*}
{\mathbf Z}&:=&(X_1[b],\cdots,X_{i-1}[b],X_i,X_{i+1}[a],X_{i+2},\cdots,X_n),\\
\sigma_i{\mathbf Z}&=&(X_1[b],\cdots,X_{i-1}[b],R_{X_i}(X_{i+1}[a]),X_i,X_{i+2},\cdots,X_n).
\end{eqnarray*}
By our choice of $a$ and \eqref{vanishing 1} we have
\[\mu_{i+1}^{-(-a)}([[{\mathbf Y}]])=[[{\mathbf Z}]]\ \mbox{ and }\ \mu_{i+1}^{-(-a+1)}([[{\mathbf Y}]])=[[\sigma_i{\mathbf Z}]].\]
Consequently $[[\sigma_i{\mathbf Z}]]=[[(\sigma_i,\mbox{\boldmath $\ell$}){\mathbf X}]]$
is an iterated irreducible silting mutation of $[[{\mathbf X}]]$,
where we put $\mbox{\boldmath $\ell$}:=(b,\cdots,b,0,a,0,\cdots,0)$.
\end{proof}

Now we are ready to prove Theorem \ref{hereditary case}.

Let $T=X_1\oplus\cdots\oplus X_n$ and $U=Y_1\oplus\cdots\oplus Y_n$
be basic silting objects in $\TT$.
By Proposition \ref{silting to exceptional} we can assume that
${\mathbf X}=(X_1,\cdots,X_n)$ and ${\mathbf Y}=(Y_1,\cdots,Y_n)$ are
full exceptional sequences.
By Theorem \ref{exceptional transitivity} there exists
$(\sigma,\mbox{\boldmath $\ell$})\in B_n\times\Z^n$ such that ${\mathbf Y}=(\sigma,\mbox{\boldmath $\ell$}){\mathbf X}$.
Writing down $\sigma\in B_n$ as a product of $\sigma_1^{\pm1},\cdots,\sigma_{n-1}^{\pm1}$ and applying Lemma \ref{exceptional mutation by silting mutation} repeatedly,
we have $\mbox{\boldmath $m$}\in\Z^n$ such that $[[(\sigma,\mbox{\boldmath $m$}){\mathbf X}]]$ is an iterated irreducible silting mutation of $[[{\mathbf X}]]$.
By Proposition \ref{first transitivity}
$[[(\sigma,\mbox{\boldmath $m$}){\mathbf X}]]=[[(0,\mbox{\boldmath $m$}-\mbox{\boldmath $\ell $}){\mathbf Y}]]$
is an iterated irreducible silting mutation of $[[{\mathbf Y}$]].
Thus we have the assertion.
\qed



\section{Silting subcategories in triangulated categories with coproducts} 
\label{Silting subcategories in triangulated categories with coproducts}

The aim of this section is to study silting subcategories for triangulated categories which have arbitrary coproducts. 
We need to modify the definition of silting subcategories (Definition \ref{silting coproducts}) from that in section \ref{section: silting subcategories}.
The advantage of this setting is that each set of compact objects gives rise to a torsion pair (Theorem \ref{torsion pair}),
which is not the case for the setting of section \ref{section: silting subcategories}.
As an application, we deduce a result of Hoshino-Kato-Miyachi \cite{HKM}, which associates a t-structure for each silting subcategory (Corollary \ref{HKM2}).
Moreover we show that this gives a one-to-one correspondence between silting subcategories and certain t-structures (Theorem \ref{correspondence}).
We also deduce a result of Pauksztello \cite{P1,P2} on co-t-structures.

\medskip
Throughout this section, let $\TT$ be a triangulated category with arbitrary coproducts.

We say that an object $X\in\TT$ is \emph{compact} 
if $\Hom_{\TT}(X,-)$ commutes with arbitrary coproducts.
We denote by $\TT^{\rm c}$ the full subcategory consisting of compact objects in $\TT$.
We say that a subcategory $\MM$ of $\TT$ is \emph{compact} if $\MM\subset\TT^{\rm c}$,
\emph{generating} if $\MM[\Z]^{\perp_{\TT}}=0$, and
\emph{skeletally small} if isomorphism classes of objects in $\MM$ form a set.

\begin{definition}\label{silting coproducts}
We say that a subcategory $\MM$ of $\TT$ is \emph{silting} if 
$\Hom_{\TT}(\MM,\MM[>0])=0$ and $\MM$ is a skeletally small compact and generating subcategory of $\TT$.
\end{definition}

In this case $\MM$ is a silting subcategory of $\TT^{\rm c}$ in the sense of Definition \ref{definition of silting}
by the following Neeman's result:

\begin{proposition}\cite{N}\label{Neeman}
For any compact generating subcategory $\MM$ of $\TT$, we have $\thick\MM=\TT^{\rm c}$.
\end{proposition}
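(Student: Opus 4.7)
The plan is to carry out the standard Bousfield/Brown-representability argument of Neeman, adapted to our slightly relaxed formulation (we only assume $\MM$ is compact and generating, not a single compact generator).

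First I would introduce the localizing subcategory $\LL := \mathsf{Loc}(\MM)$, namely the smallest full subcategory of $\TT$ that contains $\MM$ and is closed under arbitrary coproducts, triangles, summands and shifts. The initial step is to show $\LL = \TT$. Since $\MM$ is compact and $\LL$ is generated by a set of compact objects, Brown representability applied to $\LL$ provides a right adjoint to the inclusion $\LL \hookrightarrow \TT$; equivalently, every $X \in \TT$ fits in a Bousfield triangle $L \to X \to Y \to L[1]$ with $L \in \LL$ and $Y \in \LL^{\perp}$. Because $\MM \subset \LL$ and $\LL$ is closed under shifts, we have $\LL^{\perp} \subset \MM[\Z]^{\perp} = 0$ by the generating hypothesis, so $Y = 0$ and $X \cong L \in \LL$; hence $\LL = \TT$.

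Next, fix $X \in \TT^{\rm c}$; by the previous step $X \in \LL$. The key point is then to realize $X$ as a homotopy colimit of a sequence in $\thick \MM$. Concretely I would build inductively a tower
\[
X_0 \to X_1 \to X_2 \to \cdots
\]
in $\thick\MM$ together with compatible morphisms $X_n \to X$ so that $X \simeq \mathsf{hocolim}\, X_n$. Starting from $X_0 = 0$, at stage $n$ one cones off a chosen set of morphisms from shifts of objects of $\MM$ into $X$ (or into the cone of $X_{n-1} \to X$) in order to kill all obstructions to $X_n \to X$ being an isomorphism in $\LL$; since $\LL$ is the smallest localizing subcategory containing $\MM$, this procedure converges to $X$ in the sense that the telescope $X_\infty := \mathsf{hocolim}\, X_n$ satisfies $X \simeq X_\infty$.

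Finally I would invoke compactness of $X$: the identity morphism $X \to X \simeq \mathsf{hocolim}\, X_n$ factors through some finite stage $X_n \to X$, so the identity on $X$ splits through $X_n$. Thus $X$ is a direct summand of an object of $\thick\MM$, and hence $X \in \thick\MM$. Combined with the obvious inclusion $\thick\MM \subset \TT^{\rm c}$ (since compact objects are closed under shifts, triangles and summands), this yields $\thick\MM = \TT^{\rm c}$.

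The main obstacle is the second step: constructing the tower $\{X_n\}$ and proving that its homotopy colimit recovers $X$. This requires the usual small-object argument, together with the observation that the cofiber of $X_\infty \to X$ lies in $\MM[\Z]^{\perp} = 0$. Once this is set up, the final compactness argument is short and formal.
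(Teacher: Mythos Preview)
The paper does not prove this proposition; it simply cites Neeman \cite{N}. So there is no ``paper's proof'' to compare against, and I will just assess your argument on its own.

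Your overall architecture is the standard one, but there is a genuine gap in the second step. You assert that the tower $X_0\to X_1\to\cdots$ can be built \emph{inside} $\thick\MM$. It cannot, in general. To ``kill all obstructions'' at stage $n$ you must cone off \emph{every} morphism from shifts of objects of $\MM$ into the current cofibre; this forces you to take a cone on a map out of a possibly \emph{infinite} coproduct of objects of $\MM$, and the result is no longer in $\thick\MM$ but only in $\mathsf{Loc}(\MM)$. If instead you insist on coning off only finitely many morphisms at each stage so that $X_n\in\thick\MM$, there is no reason the homotopy colimit should recover $X$: the cofibre of $X_\infty\to X$ need not lie in $\MM[\Z]^\perp$.

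What actually happens in Neeman's argument is this: the tower lives in the class of objects obtained from arbitrary coproducts of shifts of objects of $\MM$ by finitely many extensions. Compactness of $X$ then gives that $X$ is a retract of some $X_n$ in that class. The missing (and nontrivial) step is a separate reduction lemma: a \emph{compact} summand of an object built by $n$ iterated extensions of arbitrary coproducts of compact objects is already a summand of an object built by $n$ iterated extensions of \emph{finite} coproducts. This is proved by induction on $n$, using compactness at each layer to replace infinite coproducts by finite ones. Only after this reduction do you land in $\thick\MM$. Your proposal conflates the two steps and thereby skips the heart of the matter.
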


\subsection{Torsion pairs induced by sets of compact objects}\label{Torsion pairs in triangulated categories induced by compact objects}

The following main result in this section asserts that each set of compact objects gives a t-structure.

\begin{theorem}\label{torsion pair}
Let $\TT$ be a triangulated category with arbitrary coproducts,
and let $\CC$ be a set of objects in $\TT^{\rm c}$.
Then $({}^\perp(\CC^\perp),\CC^\perp)$ is a torsion pair.
\end{theorem}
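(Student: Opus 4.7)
The plan is to verify both conditions of a torsion pair. The disjointness $\Hom_{\TT}({}^\perp(\CC^\perp),\CC^\perp)=0$ is immediate from the definition, so the content lies in producing, for every $X\in\TT$, an approximation triangle $A\to X\to B\to A[1]$ with $A\in\DD:={}^\perp(\CC^\perp)$ and $B\in\CC^\perp$. I would repeatedly use that $\DD$ is closed under coproducts and extensions and contains every coproduct of objects of $\CC$, while $\CC^\perp$ is closed under coproducts (thanks to the compactness of each $C\in\CC$) and under extensions.

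The construction will be a countable Bousfield-style cellular iteration. Set $X_0:=X$, and given $X_n$ put
\[A_n:=\bigoplus_{C\in\CC,\ f\in\Hom_{\TT}(C,X_n)}C,\]
with the canonical map $\sigma_n\colon A_n\to X_n$ whose restriction to the summand indexed by $f$ is $f$ itself; then define $X_{n+1}$ by the triangle $A_n\xrightarrow{\sigma_n}X_n\to X_{n+1}\to A_n[1]$. Let $B:=\operatorname{hocolim}X_n$ be the cone of $1-\operatorname{shift}\colon\bigoplus X_n\to\bigoplus X_n$, and complete $X=X_0\to B$ to a triangle $A\to X\to B\to A[1]$. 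Showing $B\in\CC^\perp$ is straightforward: for $C\in\CC$, compactness yields $\Hom_{\TT}(C,B)=\varinjlim\Hom_{\TT}(C,X_n)$, and any $g_n\colon C\to X_n$ appears as a summand of $\sigma_n$, hence is killed when pushed forward to $X_{n+1}$. So the colimit vanishes.

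The central step is $A\in\DD$. Set $\tilde A_n:=\operatorname{fib}(X\to X_n)$, so $\tilde A_0=0$, and the octahedral axiom applied to $X\to X_n\to X_{n+1}$ yields triangles $\tilde A_n\to\tilde A_{n+1}\to A_n\to\tilde A_n[1]$. Since each $A_n\in\DD$ and $\DD$ is closed under extensions, induction gives $\tilde A_n\in\DD$ for every $n$. Moreover $A=\operatorname{hocolim}\tilde A_n$, and applying $\Hom_{\TT}(-,Y)$ for $Y\in\CC^\perp$ to the defining hocolim triangle yields a Milnor short exact sequence
\[0\longrightarrow\varprojlim\nolimits^1\Hom_{\TT}(\tilde A_n[1],Y)\longrightarrow\Hom_{\TT}(A,Y)\longrightarrow\varprojlim\Hom_{\TT}(\tilde A_n,Y)\longrightarrow 0,\]
whose right-hand term vanishes because each $\Hom_{\TT}(\tilde A_n,Y)=0$.

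The hard part will be the vanishing of the $\varprojlim^1$ term, which needs a small extra observation: $\DD$ is \emph{not} known to be closed under the shift $[1]$, so one cannot simply claim $\tilde A_n[1]\in\DD$. The remedy is to apply $\Hom_{\TT}(-,Y)$ to the shifted triangle $\tilde A_n[1]\to\tilde A_{n+1}[1]\to A_n[1]\to\tilde A_n[2]$: since $\Hom_{\TT}(A_n,Y)=0$, the transition map $\Hom_{\TT}(\tilde A_{n+1}[1],Y)\to\Hom_{\TT}(\tilde A_n[1],Y)$ is surjective. A tower of abelian groups with surjective structure maps has vanishing $\varprojlim^1$, so $\Hom_{\TT}(A,Y)=0$, $A\in\DD$, and the torsion pair is established.
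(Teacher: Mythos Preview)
Your construction of the tower $X_n$ and the verification that $B=\operatorname{hocolim}X_n\in\CC^\perp$ coincide with the paper's argument. The difference lies in how you show $A\in{}^\perp(\CC^\perp)$, and here there is a gap.

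The unjustified step is the identification $A=\operatorname{hocolim}\tilde A_n$. To obtain it you would need a $3\times 3$ argument on the tower of triangles $\tilde A_n\to X\to X_n$, yielding a triangle
\[\operatorname{hocolim}\tilde A_n\longrightarrow\operatorname{hocolim}(\text{const }X)\longrightarrow B,\]
together with the fact that the homotopy colimit of the \emph{constant} tower at $X$ is $X$ itself, with the induced map to $B$ equal to your $d$. This last point is not obvious in an arbitrary triangulated category with coproducts: the na\"ive retraction $\coprod X\to\coprod X$, $\iota_n\mapsto\iota_0+\cdots+\iota_n$, does not land in the coproduct, and since the theorem does not assume $\TT$ is compactly generated, you cannot conclude by testing against $\TT^{\rm c}$. (There is also the usual coherence issue: the maps $\tilde A_n\to\tilde A_{n+1}$ are only produced by the octahedral axiom, so some care is needed to make the $3\times 3$ square commute.)

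The paper avoids this entirely by working with the tower $X_n$, for which $B$ is the hocolim \emph{by definition}. Applying $\Hom_\TT(-,Y)$ to $S_X\to X\xrightarrow{d}U_X\to S_X[1]$, it shows directly that $\Hom_\TT(U_X,Y)\xrightarrow{\cdot d}\Hom_\TT(X,Y)$ is surjective (constructing compatible lifts $f_n:X_n\to Y$) and that $\Hom_\TT(U_X,Y[1])\xrightarrow{\cdot d}\Hom_\TT(X,Y[1])$ is injective (an induction using $\Hom_\TT(C_n,Y)=0$, which forces each $g_{n+1}$ to vanish once $g_n$ does). These are precisely the Milnor-type statements you isolated---surjectivity of the transition maps $\Hom_\TT(X_{n+1},Y)\to\Hom_\TT(X_n,Y)$ and the resulting vanishing of $\varprojlim^1$---but applied to the $X_n$-tower rather than to your auxiliary $\tilde A_n$-tower. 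If you rewrite your $\varprojlim^1$ argument for the $X_n$-tower and feed it into the long exact sequence for the triangle $A\to X\to B$, the proof goes through without ever needing to identify $A$ as a hocolim.
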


We need the following general observation of homotopy colimit \cite{N}.

\begin{proposition}\label{hocolim}
Let $\TT$ be a triangulated category with arbitrary coproducts.
For a sequence 
\[X_0\xrightarrow{a_0}X_1\xrightarrow{a_1}X_2\xrightarrow{a_2}\cdots\]
we put
\[1-{\rm shift}:=\left(\begin{smallmatrix}
1&0&0&0&\cdots\\
-a_0&1&0&0&\cdots\\
0&-a_1&1&0&\cdots\\
0&0&-a_2&1&\cdots\\
\vdots&\vdots&\vdots&\vdots&\ddots
\end{smallmatrix}\right):\coprod_{i\ge0}X_i\to\coprod_{i\ge0}X_i\]
and take a triangle
\[\coprod_{i\ge0}X_i\xrightarrow{1-{\rm shift}}\coprod_{i\ge0}X_i\to Y\to\coprod_{i\ge0}X_i[1],\]
where $Y$ is called the \emph{homotopy colimit}.
Then we have the following isomorphism on $\TT^{\rm c}$:
\[\Hom_{\TT}(-,Y)\simeq{\rm colim}(\Hom_{\TT}(-,X_0)\xrightarrow{a_0\cdot}
\Hom_{\TT}(-,X_1)\xrightarrow{a_1\cdot}\Hom_{\TT}(-,X_2)\xrightarrow{a_2\cdot}\cdots).\]
\end{proposition}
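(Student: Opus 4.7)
The plan is to apply the cohomological functor $\Hom_{\TT}(C,-)$, for a fixed compact object $C\in\TT^{\rm c}$, to the defining triangle
\[\coprod_{i\ge0}X_i\xrightarrow{1-{\rm shift}}\coprod_{i\ge0}X_i\to Y\to\coprod_{i\ge0}X_i[1]\]
and to read off the claimed isomorphism from the resulting long exact sequence. Compactness of $C$ will be used precisely to interchange $\Hom_{\TT}(C,-)$ with the two displayed coproducts, so the whole argument then takes place inside ordinary abelian groups; naturality in $C$ will be automatic from the construction.

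First, compactness yields natural identifications $\Hom_{\TT}(C,\coprod_{i\ge0}X_i[n])\simeq\bigoplus_{i\ge0}\Hom_{\TT}(C,X_i[n])$ for $n=0,1$, and under these the map induced by $1-{\rm shift}$ sends $(f_i)_{i\ge0}$ to $(f_i-a_{i-1}f_{i-1})_{i\ge0}$ (with $f_{-1}:=0$). This is precisely the familiar \emph{telescope differential} attached to the sequence $\Hom_{\TT}(C,X_0)\xrightarrow{a_0\cdot}\Hom_{\TT}(C,X_1)\xrightarrow{a_1\cdot}\cdots$ of abelian groups.

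The next step invokes the standard abelian-group lemma: for any direct system $A_0\xrightarrow{a_0}A_1\xrightarrow{a_1}A_2\to\cdots$, the endomorphism $1-{\rm shift}$ of $\bigoplus_{i\ge0}A_i$ is injective and its cokernel is canonically identified with ${\rm colim}_iA_i$. Applying this at $n=0$ identifies the cokernel with ${\rm colim}_i\Hom_{\TT}(C,X_i)$, while applying it at $n=1$ shows that the induced map $\bigoplus_i\Hom_{\TT}(C,X_i[1])\to\bigoplus_i\Hom_{\TT}(C,X_i[1])$ is injective. Feeding both facts into the four-term piece of the long exact sequence around $\Hom_{\TT}(C,Y)$ gives $\Hom_{\TT}(C,Y)\simeq{\rm colim}_i\Hom_{\TT}(C,X_i)$ naturally in $C\in\TT^{\rm c}$. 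There is no serious obstacle; the only real content is the abelian-group lemma, which in turn rests on the classical presentation of an $\mathbb{N}$-indexed colimit as the cokernel of $1-{\rm shift}$, and the rest is a mechanical use of the triangle and compactness.
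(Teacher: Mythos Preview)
Your argument is correct and is precisely the standard proof of this fact. Note that the paper does not give its own proof of this proposition: it is stated as a ``general observation of homotopy colimit'' with a citation to Neeman \cite{N}, so there is nothing to compare against beyond the literature. Your proof matches the usual argument found there.
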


We are ready to prove our Theorem \ref{torsion pair}.

For any $X\in\TT$, we only have to construct a triangle
$S_X\to X\to U_X\to S_X[1]$ in $\TT$ with $U_X\in\CC^\perp$ and $S_X\in{}^\perp(\CC^\perp)$.
This is given as follows:

\begin{proposition}\label{construction}
Fix $X=X_0\in\TT$. For each $i\ge0$ we take a triangle
\begin{equation}\label{approximation}
\xymatrix{
C_i\ar[r]^{b_i}&X_i\ar[r]^{a_i}&X_{i+1}\ar[r]&C_i[1]
}\end{equation}
such that $b_i$ is a right $\Add\CC$-approximation.
This is possible since $\CC$ forms a set.
Thus we have a sequence
\[\xymatrix@R=.4cm{
C_0\ar_{b_0}[d]&C_1\ar_{b_1}[d]&C_2\ar_{b_2}[d]\\
X_0\ar^{a_0}[r]&X_1\ar^{a_1}[r]&X_2\ar^{a_2}[r]&\cdots
}\]
of morphisms.
We denote by $\iota_i$ the inclusion $X_i\to\coprod_{i\ge0}X_i$.
We take a triangle
\begin{equation}\label{hocolim triangle}
\coprod_{i\ge0}X_i\xrightarrow{1-{\rm shift}}\coprod_{i\ge0}X_i\xrightarrow{c}U_X\to\coprod_{i\ge0}X_i[1]
\end{equation}
and for the composition $d:=c\iota_0:X\to U_X$ we take a triangle
\begin{equation}\label{torsion pair triangle}
\xymatrix{S_X\ar[r]&X\ar[r]^d&U_X\ar[r]&S_X[1].}
\end{equation}
Then we have $S_X\in{}^\perp(\CC^\perp)$ and $U_X\in\CC^\perp$.
\end{proposition}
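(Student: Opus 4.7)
The plan is to verify the two memberships $U_X\in\CC^\perp$ and $S_X\in{}^\perp(\CC^\perp)$ separately by translating Hom-computations out of $U_X$ into limits and colimits along the sequence $(X_i)$ via the triangle \eqref{hocolim triangle}.

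For $U_X\in\CC^\perp$, fix $C\in\CC$. By compactness of $C$, Proposition \ref{hocolim} gives $\Hom_{\TT}(C,U_X)\simeq\mathrm{colim}_i\Hom_{\TT}(C,X_i)$. Applying $\Hom_{\TT}(C,-)$ to \eqref{approximation}, the right $\Add\CC$-approximation property of $b_i$ makes $\Hom_{\TT}(C,b_i)$ surjective, hence by exactness $\Hom_{\TT}(C,a_i)=0$. A filtered colimit with zero transition maps vanishes, so $\Hom_{\TT}(C,U_X)=0$.

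For $S_X\in{}^\perp(\CC^\perp)$, fix $Y\in\CC^\perp$. The long exact sequence obtained from \eqref{torsion pair triangle} by applying $\Hom_{\TT}(-,Y)$ reduces $\Hom_{\TT}(S_X,Y)=0$ to the conjunction (a) $d^*\colon\Hom_{\TT}(U_X,Y)\to\Hom_{\TT}(X,Y)$ is surjective and (b) $d^*[-1]\colon\Hom_{\TT}(U_X,Y[1])\to\Hom_{\TT}(X,Y[1])$ is injective. Applying $\Hom_{\TT}(-,W)$ to \eqref{hocolim triangle} produces for each $W$ a short exact sequence
\[
0\to{\lim}^1_i\Hom_{\TT}(X_i,W[-1])\to\Hom_{\TT}(U_X,W)\to\lim_i\Hom_{\TT}(X_i,W)\to 0,
\]
and \eqref{approximation} together with $\Hom_{\TT}(\Add\CC,Y)=0$ shows that each $a_i^*\colon\Hom_{\TT}(X_{i+1},Y)\to\Hom_{\TT}(X_i,Y)$ is surjective. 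For (a), take $W=Y$: the surjective transitions let one lift any $f\colon X\to Y$ inductively to a compatible system in $\lim_i\Hom_{\TT}(X_i,Y)$, hence to $\Hom_{\TT}(U_X,Y)$, so $d^*$ is surjective. For (b), take $W=Y[1]$: the Mittag-Leffler vanishing kills ${\lim}^1_i\Hom_{\TT}(X_i,Y)$, so $\Hom_{\TT}(U_X,Y[1])\simeq\lim_i\Hom_{\TT}(X_i,Y[1])$; to see that the projection to $\Hom_{\TT}(X_0,Y[1])$ is injective, take a compatible system $(f_i)$ with $f_0=0$ and induct: if $f_i=0$ then $f_{i+1}\circ a_i=f_i=0$ forces $f_{i+1}$ to factor through the connecting map $X_{i+1}\to C_i[1]$, and the induced morphism $C_i\to Y$ vanishes because $C_i\in\Add\CC$ and $Y\in\CC^\perp$.

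The main obstacle is the asymmetry caused by the hypothesis $Y\in\CC^\perp$ providing $\Hom_{\TT}(C_i,Y)=0$ only in degree zero. This degree-zero input is exactly what drives the Mittag-Leffler step powering (a), but (b) cannot be handled by a symmetric degree-one Mittag-Leffler argument and instead requires the subtler lift-to-zero induction using the connecting morphisms $X_{i+1}\to C_i[1]$ to channel the single available vanishing into an injectivity statement at the level of $Y[1]$. Keeping the two short exact sequences (for $W=Y$ and $W=Y[1]$) and their distinct Mittag-Leffler roles organized is the principal piece of bookkeeping.
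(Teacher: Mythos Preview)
Your proof is correct and follows essentially the same route as the paper's: both arguments hinge on the surjectivity of the transition maps $a_i^*\colon\Hom_{\TT}(X_{i+1},Y)\to\Hom_{\TT}(X_i,Y)$ (from $\Hom_{\TT}(C_i,Y)=0$) and on the same induction that propagates $f_i=0$ to $f_{i+1}=0$ via the connecting map $X_{i+1}\to C_i[1]$. The only difference is packaging: you organize steps (ii)--(iv) of the paper's argument through the Milnor $\lim/\lim^1$ short exact sequence and Mittag--Leffler vanishing, whereas the paper carries out the same computations by hand---step (iii) there is precisely the vanishing of $\lim^1_i\Hom_{\TT}(X_i,Y)$ proved directly, and step (iv) unpacks the isomorphism $\Hom_{\TT}(U_X,Y[1])\simeq\lim_i\Hom_{\TT}(X_i,Y[1])$ and the injectivity of projection to degree~$0$ without naming them.
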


\begin{proof}
(i) We shall show $U_X\in\CC^\perp$.

For any $i\ge0$ we have an exact sequence
\[\Hom_{\TT}(\CC,C_i)\xrightarrow{b_i\cdot}\Hom_{\TT}(\CC,X_i)\xrightarrow{a_i\cdot}\Hom_{\TT}(\CC,X_{i+1}).\]
Since $b_i$ is a right $\Add\CC$-approximation, the left map is surjective, and hence the right map is zero.
Since $\CC$ consists of compact objects, we have by Proposition \ref{hocolim}
\[\Hom_{\TT}(\CC,U_X)\simeq{\rm colim}(\Hom_{\TT}(\CC,X_0)\xrightarrow{0}
\Hom_{\TT}(\CC,X_1)\xrightarrow{0}\cdots)=0.\]
Thus we have $U_X\in\CC^\perp$.

(ii) We shall show that $d:X\to U_X$ is a left $\CC^\perp$-approximation.

Fix any $f_0:X=X_0\to Y$ with $Y\in\CC^\perp$.
Since $f_0b_0\in\Hom_{\TT}(C_0,Y)=0$, there exists $f_1:X_1\to Y$ such that $f_0=f_1a_0$
by the triangle \eqref{approximation}.
Similarly we have $f_i:X_i\to Y$ for any $i>0$ such that
\begin{equation}\label{f=af}
f_{i-1}=f_ia_{i-1}.
\end{equation}
\[\xymatrix@R=.4cm{
C_0\ar_{b_0}[d]&C_1\ar_{b_1}[d]&C_2\ar_{b_2}[d]\\
X_0\ar^{a_0}[r]\ar_{f_0}[d]&X_1\ar^{a_1}[r]\ar_{f_1}[ld]&X_2\ar^{a_2}[r]\ar^{f_2}[lld]&\cdots\\
Y
}\]
Now we consider a morphism
\[f:=(f_0\ f_1\ \cdots)\in\Hom_{\TT}(\coprod_{i\ge0}X_i,Y).\]
By \eqref{f=af} we have $f(1-{\rm shift})=0$.
By the triangle \eqref{hocolim triangle} there exists $g\in\Hom_{\TT}(U_X,Y)$
such that $f=gc$.
In particular we have $f_0=f\iota_0=gd$.

(iii) We shall show that
$\Hom_{\TT}(\coprod_{i\ge0}X_i,\CC^\perp)\xrightarrow{\cdot(1-{\rm shift})}
\Hom_{\TT}(\coprod_{i\ge0}X_i,\CC^\perp)$ is surjective.

Let $Y\in\CC^\perp$.
Fix any $f=(f_0\ f_1\ \cdots)\in\Hom_{\TT}(\coprod_{i\ge0}X_i,Y)$.
Since $f_0b_0=0$, there exists $g_1\in\Hom_{\TT}(X_1,Y)$ such that $f_0=-g_1a_0$
by the triangle \eqref{approximation}.
Since $(f_1-g_1)b_1=0$, there exists $g_2\in\Hom_{\TT}(X_2,Y)$ such that
$f_1-g_1=-g_2a_1$.
Similarly we have $g_i:X_i\to Y$ for any $i\ge0$ which makes the following diagram commutative.
\[\xymatrix@R=.4cm{
C_i\ar[r]^{b_i}&X_i\ar[r]^{a_i}\ar[d]_{f_i-g_i}&X_{i+1}\ar[r]\ar[dl]^{-g_{i+1}}&C_i[1]\\
&Y}\]
Now the morphism $g:=(0\ g_1\ g_2\ \cdots)$ satisfies $f=g(1-{\rm shift})$.

(iv) We shall show that
$\Hom_{\TT}(U_X,\CC^\perp[1])\xrightarrow{\cdot d}\Hom_{\TT}(X,\CC^\perp[1])$ is injective.

Let $Y\in\CC^\perp[1]$. Assume that $f\in\Hom_{\TT}(U_X,Y)$ satisfies $fd=0$.
Put $g=(g_0\ g_1\ \cdots):=fc\in\Hom_{\TT}(\coprod_{i\ge0}X_i,Y)$.
Then we have $g_0=fc\iota_0=fd=0$.
\[\xymatrix@R=.4cm{
&&Y\\
\coprod_{i\ge0}X_i\ar[r]^{1-{\rm shift}}&\coprod_{i\ge0}X_i\ar[r]^{c}\ar[ur]^g&U_X\ar[r]\ar[u]^f&\coprod_{i\ge0}X_i[1]\\
S_X\ar[r]\ar[u]&X\ar[r]^d\ar[u]^{\iota_0}&U_X\ar[r]\ar@{=}[u]&S_X[1]\ar[u]
}\]
Assume $g_i=0$ for some $i\ge0$. Since
\[-g_{i+1}a_i=g_i-g_{i+1}a_i=g(1-{\rm shift})\iota_i=fc(1-{\rm shift})\iota_i=0,\]
we have that $g_{i+1}$ factors through $C_i[1]$ by the triangle \eqref{approximation}.
Since $Y\in\CC^\perp[1]$, we have $g_{i+1}=0$.

Inductively we have $g=0$, so $f$ factors through $U_X\to\coprod_{i\ge0}X_i[1]$.
Now using (iii) we have $f=0$.

(v) We shall show $S_X\in{}^\perp(\CC^\perp)$.

We have an exact sequence
\[\Hom_{\TT}(U_X,\CC^\perp)\xrightarrow{\cdot d}\Hom_{\TT}(X,\CC^\perp)\xrightarrow{}\Hom_{\TT}(S_X,\CC^\perp)\xrightarrow{}
\Hom_{\TT}(U_X[-1],\CC^\perp)\xrightarrow{\cdot d[-1]}\Hom_{\TT}(X[-1],\CC^\perp).\]
By (ii) the left map $(\cdot d)$ is surjective, and
by (iv) the right map $(\cdot d[-1])$ is injective.
Thus we have $\Hom_{\CC}(S_X,\CC^\perp)=0$, and we have the assertion.
\end{proof}

As special cases of Theorem \ref{torsion pair}, we have the following results due to Beligiannis-Reiten and Pauksztello.

\begin{corollary}\label{t and co-t}
Let $\TT$ be a triangulated category with arbitrary coproducts,
and $\CC$ a set of objects in $\TT^{\rm c}$.
\begin{itemize}
\item[(a)] \cite{BR} If $\CC[1]\subset\CC$, then $({}^\perp(\CC^\perp),\CC^\perp [1])$ is a t-structure.
\item[(b)] \cite{P2} If $\CC\subset\CC[1]$, then $({}^\perp(\CC^\perp)[1],\CC^\perp)$ is a co-t-structure.
\end{itemize}
\end{corollary}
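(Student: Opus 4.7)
The plan is to derive both parts directly from Theorem \ref{torsion pair}, which already supplies the torsion pair $({}^\perp(\CC^\perp),\CC^\perp)$. All that remains is to verify the extra shift-closure condition that distinguishes a t-structure (respectively, a co-t-structure) from an arbitrary torsion pair. The key observation is the formal identity ${}^\perp(\CC^\perp)[n]={}^\perp((\CC^\perp)[n])$, together with the elementary fact that passing to orthogonals reverses inclusions.

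For part (a), I would start from the hypothesis $\CC[1]\subset\CC$ and first show that $\CC^\perp\subset\CC^\perp[1]$. Indeed, for any $Y\in\CC^\perp$,
\[\Hom_{\TT}(\CC,Y[-1])=\Hom_{\TT}(\CC[1],Y)\subset\Hom_{\TT}(\CC,Y)=0,\]
so $Y[-1]\in\CC^\perp$. Taking left orthogonals reverses the inclusion, giving ${}^\perp(\CC^\perp[1])\subset{}^\perp(\CC^\perp)$, and rewriting the left side as ${}^\perp(\CC^\perp)[1]$ yields the desired closure ${}^\perp(\CC^\perp)[1]\subset{}^\perp(\CC^\perp)$. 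Since shifting a torsion pair by $[1]$ produces another torsion pair, Theorem \ref{torsion pair} gives that $({}^\perp(\CC^\perp)[1],\CC^\perp[1])$ is a torsion pair, so by the definition of t-structure we conclude that $({}^\perp(\CC^\perp),\CC^\perp[1])$ is a t-structure.

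For part (b), I would run the dual argument. The hypothesis $\CC\subset\CC[1]$ (equivalently $\CC[-1]\subset\CC$) gives, for $Y\in\CC^\perp$,
\[\Hom_{\TT}(\CC,Y[1])=\Hom_{\TT}(\CC[-1],Y)\subset\Hom_{\TT}(\CC,Y)=0,\]
so $\CC^\perp[1]\subset\CC^\perp$, and passing to left orthogonals yields ${}^\perp(\CC^\perp)\subset{}^\perp(\CC^\perp)[1]$. Setting $\XX:={}^\perp(\CC^\perp)[1]$, this is exactly the co-t-structure condition $\XX\subset\XX[1]$, while Theorem \ref{torsion pair} provides the required torsion pair $(\XX[-1],\CC^\perp)=({}^\perp(\CC^\perp),\CC^\perp)$. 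Hence $({}^\perp(\CC^\perp)[1],\CC^\perp)$ is a co-t-structure.

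The argument is entirely formal given Theorem \ref{torsion pair}, so no genuine obstacle is expected; the only point requiring care is the indexing convention in the definitions of t-structure and co-t-structure as stated in the introduction, which dictates which side of the torsion pair needs to be shifted.
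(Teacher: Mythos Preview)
Your proof is correct and matches the paper's approach: the paper gives no explicit proof, simply presenting the corollary as an immediate special case of Theorem~\ref{torsion pair}, and your argument spells out precisely the formal shift-closure verification the paper leaves implicit.
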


Let us apply our results to more special cases, where we can describe the category ${}^\perp(\CC^\perp)$ in a more direct way.

The first application is the following result (b) of Hoshino-Kato-Miyachi \cite{HKM}.

\begin{corollary}\label{HKM2}
Let $\TT$ be a triangulated category with arbitrary coproducts and $\MM$ a skeletally small compact subcategory satisfying $\Hom_{\TT}(\MM,\MM[>0])=0$.
\begin{itemize}
\item[(a)] We have ${}^\perp(\MM[>0]^\perp)\subseteq\MM[\le0]^\perp$.
\item[(b)] If $\MM$ is a silting subcategory, then equality in (a) holds and $(\MM[<0]^\perp,\MM[>0]^\perp)$ is a t-structure of $\TT$.
\end{itemize}
\end{corollary}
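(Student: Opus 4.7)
The plan is to prove (a) by the approximation machinery of Proposition \ref{construction}, then deduce (b) via the same machinery together with the generating condition. For (a), take $X\in{}^\perp(\MM[>0]^\perp)$ and fix $M\in\MM$ and $i\ge 0$, aiming to show $\Hom_{\TT}(M,X[i])=\Hom_{\TT}(M[-i],X)=0$. I apply the construction of Proposition \ref{construction} with $\CC:=\MM[>0]$ (which is a set of compact objects, since $\MM$ is skeletally small compact): this produces a sequence $X=X_0\to X_1\to X_2\to\cdots$, with each $X_{k+1}$ the cone of a right $\Add(\MM[>0])$-approximation $C_k\to X_k$, together with a triangle $S_X\to X\to U_X\to S_X[1]$ in which $S_X\in{}^\perp(\MM[>0]^\perp)$, $U_X\in\MM[>0]^\perp$, and $U_X$ is the homotopy colimit of the $X_k$. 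Since $X$ already lies in ${}^\perp(\MM[>0]^\perp)$, uniqueness of the torsion pair decomposition supplied by Theorem \ref{torsion pair} forces $U_X\simeq 0$. The crucial input is the vanishing $\Hom_{\TT}(M[-i],C_k)=0$ for every $k$: by compactness of $M[-i]$ and the fact that $C_k\in\Add(\MM[>0])$, any morphism $M[-i]\to C_k$ factors through a finite sub-coproduct of shifts $M'[j]$ with $M'\in\MM$ and $j>0$, and $\Hom_{\TT}(M[-i],M'[j])=\Hom_{\TT}(M,M'[i+j])$ vanishes because $i+j>0$ and $\Hom_{\TT}(\MM,\MM[>0])=0$. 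Feeding this into the triangles $C_k\to X_k\to X_{k+1}\to C_k[1]$ makes every transition map $\Hom_{\TT}(M[-i],X_k)\to\Hom_{\TT}(M[-i],X_{k+1})$ injective, while Proposition \ref{hocolim} together with the compactness of $M[-i]$ identifies the colimit of these groups with $\Hom_{\TT}(M[-i],U_X)=0$. The chain of injections then forces $\Hom_{\TT}(M[-i],X)=0$, which establishes (a).

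For (b), assume $\MM$ is silting, so in particular generating. Take $X\in\MM[\le 0]^\perp$ and apply the same construction to obtain $S_X\to X\to U_X\to S_X[1]$ with $S_X\in{}^\perp(\MM[>0]^\perp)$ and $U_X\in\MM[>0]^\perp$. By (a), $S_X\in\MM[\le 0]^\perp$; a straightforward long exact sequence argument using $X,S_X\in\MM[\le 0]^\perp$ then shows $U_X\in\MM[\le 0]^\perp$ as well. Hence $\Hom_{\TT}(M,U_X[j])=0$ both for $j<0$ (from $U_X\in\MM[>0]^\perp$) and for $j\ge 0$ (from $U_X\in\MM[\le 0]^\perp$), so $\Hom_{\TT}(\MM[\Z],U_X)=0$. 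The generating condition $\MM[\Z]^{\perp_{\TT}}=0$ now yields $U_X=0$, whence $X\simeq S_X\in{}^\perp(\MM[>0]^\perp)$. Combined with (a), this gives the equality $\MM[\le 0]^\perp={}^\perp(\MM[>0]^\perp)$ claimed in (b).

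The t-structure assertion then drops out formally: a direct unwinding of definitions shows $\MM[<0]^\perp[1]=\MM[\le 0]^\perp$, which by the equality just established coincides with ${}^\perp(\MM[>0]^\perp)$. Thus $(\MM[<0]^\perp[1],\MM[>0]^\perp)=({}^\perp(\MM[>0]^\perp),\MM[>0]^\perp)$ is a torsion pair by Theorem \ref{torsion pair}, and together with the immediate inclusion $\MM[<0]^\perp[1]\subset\MM[<0]^\perp$ this makes $(\MM[<0]^\perp,\MM[>0]^\perp)$ a t-structure. I expect the main obstacle to be the compactness-and-vanishing calculation underlying (a), namely checking that the argument applies uniformly to every shift $M[-i]$ with $i\ge 0$ and that passage through the infinite homotopy colimit preserves enough information to transfer vanishing from $U_X$ back down to $X=X_0$. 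Once that step is in place, uniqueness of the torsion pair decomposition, Proposition \ref{hocolim} and (in (b)) the generating condition together finish the proof with very little extra effort.
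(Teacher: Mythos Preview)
Your proof is correct and follows essentially the same approach as the paper: both set $\CC=\MM[>0]$, use the approximation construction of Proposition~\ref{construction}, exploit the vanishing $\Hom_{\TT}(\MM[\le 0],C_k)=0$, and finish (b) by showing $U_X\in\MM[\Z]^\perp=0$ via the generating condition. The only minor organisational difference is that the paper records the single isomorphism $(d\cdot):\Hom_{\TT}(\MM[\le 0],X)\xrightarrow{\sim}\Hom_{\TT}(\MM[\le 0],U_X)$ (coming from the transition maps being \emph{isomorphisms}, not merely injections) and reads off both (a) and (b) from it, whereas you use uniqueness of the torsion decomposition to get $U_X\simeq 0$ in (a) and then a separate long exact sequence step in (b); both routes are valid and amount to the same computation.
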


\begin{proof}
Let $\CC:=\MM[>0]$. For any $X\in\TT$, we apply Proposition \ref{construction} to get a triangle 
$S_X\to X\stackrel{d}{\to} U_X\to S_X[1]$ with $S_X\in{}^\perp(\CC^\perp)$ and $U_X\in\CC^\perp$.

Applying $\Hom_{\TT}(\MM[\le0],-)$ to \eqref{approximation} for any $i\ge0$, we have an isomorphism
\[(a_i\cdot):\Hom_{\TT}(\MM[\le0],X_i)\xrightarrow{\sim}\Hom_{\TT}(\MM[\le0],X_{i+1}).\]
Since $\MM[\le0]$ is compact, Proposition \ref{hocolim} gives an isomorphism
\begin{equation}\label{key iso}
(d\cdot):\Hom_{\TT}(\MM[\le0],X)\xrightarrow{\sim}\Hom_{\TT}(\MM[\le0],U_X).
\end{equation}

(a) Let $X\in{}^\perp(\CC^\perp)$. Then the morphism $d:X\to U_X$ is zero since $U_X\in\CC^\perp$.
Since \eqref{key iso} is an isomorphism which is zero, we have $\Hom_{\TT}(\MM[\le0],X)=0$.

(b) By Theorem \ref{torsion pair} we only have to show ${}^\perp(\CC^\perp)=\MM[\le0]^\perp$.
By (a), we only have to show ${}^\perp(\CC^\perp)\supset\MM[\le0]^\perp$.
Let $X\in\MM[\le0]^\perp$.
By \eqref{key iso}, we have $\Hom_{\TT}(\MM[\le0],U_X)\simeq\Hom_{\TT}(\MM[\le0],X)=0$.
Thus $U_X\in\CC^\perp\cap\MM[\le0]^\perp=\MM[\Z]^\perp=0$, and we have $X\simeq S_X\in{}^\perp(\CC^\perp)$.
\end{proof}

The second application is Corollary \ref{Pauksztello} below, which is
a version of a result of Pauksztello \cite{P1} on co-t-structures, where the assumption $\Hom_{\TT}(\MM,\MM[1])=0$ in \cite{P1} is dropped.

We say that a subcategory $\MM$ of $\TT$ is \emph{cosilting} if 
$\Hom_{\TT}(\MM,\MM[<0])=0$ and $\MM$ is a skeletally small compact and generating subcategory of $\TT$.

We say that an additive category $\MM$ is \emph{semisimple}
if  $\End_{\MM}(X)$ is a semisimple ring for any $X\in\MM$.

\begin{corollary}\label{Pauksztello}
Let $\TT$ be a triangulated category with arbitrary coproducts and $\MM$ a skeletally small compact and semisimple subcategory satisfying $\Hom_{\TT}(\MM,\MM[<0])=0$.
\begin{itemize}
\item[(a)] We have ${}^\perp(\MM[<0]^\perp)\subseteq\MM[\ge0]^\perp$.
\item[(b)] If $\MM$ is a cosilting subcategory, then equality in (a) holds and $(\MM[>0]^\perp,\MM[<0]^\perp)$ is a co-t-structure of $\TT$.
\end{itemize}
\end{corollary}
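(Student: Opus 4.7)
The plan is to dualise the proof of Corollary \ref{HKM2}, replacing $\CC := \MM[>0]$ by $\CC := \MM[<0]$, which again is a set of compact objects in $\TT$. First I would apply Theorem \ref{torsion pair} to obtain the torsion pair $({}^\perp(\CC^\perp), \CC^\perp)$, and Proposition \ref{construction} to produce, for each $X \in \TT$, a triangle
\[S_X \to X \xrightarrow{d} U_X \to S_X[1]\]
with $S_X \in {}^\perp(\MM[<0]^\perp)$ and $U_X \in \MM[<0]^\perp$, built from a sequence $X = X_0 \xrightarrow{a_0} X_1 \to \cdots$ via triangles $C_i \xrightarrow{b_i} X_i \xrightarrow{a_i} X_{i+1} \to C_i[1]$ where $b_i$ is a right $\Add \CC$-approximation. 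The crux will be to establish, as the analogue of \eqref{key iso}, an isomorphism
\[\Hom_{\TT}(\MM[\ge 0], X) \xrightarrow{d\cdot} \Hom_{\TT}(\MM[\ge 0], U_X).\]

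Granted this, parts (a) and (b) should then fall out exactly as in Corollary \ref{HKM2}. For (a), if $X \in {}^\perp(\MM[<0]^\perp)$ then $d = 0$ (since $U_X \in \MM[<0]^\perp$), so the isomorphism is the zero map and both sides vanish, giving $X \in \MM[\ge 0]^\perp$. For (b), given $X \in \MM[\ge 0]^\perp$, the isomorphism combined with $U_X \in \MM[<0]^\perp$ and the cosilting generating hypothesis forces $U_X \in \MM[\Z]^\perp = 0$, whence $X \simeq S_X \in {}^\perp(\MM[<0]^\perp)$, yielding ${}^\perp(\MM[<0]^\perp) = \MM[\ge 0]^\perp$. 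The co-t-structure claim in (b) then follows directly from Corollary \ref{t and co-t}(b), since $\CC = \MM[<0]$ visibly satisfies $\CC \subseteq \CC[1] = \MM[\le 0]$.

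To produce the displayed isomorphism, I would use compactness of $\MM[\ge 0]$ and Proposition \ref{hocolim} to reduce to showing that each map $a_i \cdot : \Hom_{\TT}(\MM[\ge 0], X_i) \to \Hom_{\TT}(\MM[\ge 0], X_{i+1})$ is an isomorphism. The long exact sequence supplies injectivity for free: since $C_i \in \Add(\MM[<0])$ and $\Hom_{\TT}(\MM, \MM[<0]) = 0$, we get $\Hom_{\TT}(\MM[\ge 0], C_i) = 0$. Surjectivity requires vanishing of the boundary into $\Hom_{\TT}(\MM[\ge 0], C_i[1])$. Writing $C_i \simeq \coprod_{k \ge 1} N_k[-k]$ with $N_k \in \Add \MM$, only the summand $\Hom_{\TT}(\MM, N_1)$ can contribute (all others lie in $\Hom_{\TT}(\MM, \MM[<0]) = 0$), so it suffices to show that $b_i[1]\cdot : \Hom_{\TT}(\MM, N_1) \to \Hom_{\TT}(\MM, X_i[1])$ is injective.

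The hard part, and the point at which semisimplicity is crucial, is the injectivity of this last map. The map $N_1 \to X_i[1]$ is the degree-$(-1)$ part of $b_i$, hence a right $\Add \MM$-approximation of $X_i[1]$; chosen right minimal, it induces a minimal surjection $\Hom_{\TT}(M, N_1) \to \Hom_{\TT}(M, X_i[1])$ of $\End_{\TT}(M)$-modules for each indecomposable $M \in \MM$. Because $\End_{\TT}(M)$ is semisimple, every module over it is projective, so the surjection splits and minimality forces the kernel to be zero, making it an isomorphism. Carefully constructing a right minimal right $\Add \MM$-approximation in this coproduct-closed setting, or equivalently building $N_1$ from a minimal $\End_{\TT}(\MM)$-generating set of $\Hom_{\TT}(\MM, X_i[1])$ using semisimplicity, is the only nontrivial ingredient; the rest is a mechanical dualisation of HKM.
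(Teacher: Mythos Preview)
Your proposal is correct and follows essentially the same route as the paper's proof: set $\CC=\MM[<0]$, build the torsion-pair triangle $S_X\to X\to U_X\to S_X[1]$ via Proposition \ref{construction}, establish the key isomorphism $\Hom_{\TT}(\MM[\ge0],X)\xrightarrow{\sim}\Hom_{\TT}(\MM[\ge0],U_X)$, and then argue exactly as in Corollary \ref{HKM2}. The paper phrases the crucial step slightly differently---it simply stipulates that the approximations $b_i$ be chosen so that $\Hom_{\TT}(\MM[-1],C_i)\xrightarrow{b_i\cdot}\Hom_{\TT}(\MM[-1],X_i)$ is an isomorphism, declaring this ``possible since $\MM$ is semisimple''---whereas you unpack why semisimplicity makes this possible; but the content is the same, and your sketch of how to realise the degree $-1$ component as a module-theoretic isomorphism over a semisimple endomorphism ring is exactly what underlies the paper's one-line assertion.
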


\begin{proof}
Let $\CC:=\MM[<0]$. 
For any $X\in\TT$, we apply Proposition \ref{construction} with an additional assumption
that $\Hom_{\TT}(\MM[-1],C_i)\xrightarrow{b_i\cdot}\Hom_{\TT}(\MM[-1],X_i)$ is an isomorphism for any $i\ge0$. This is possible since $\MM$ is semisimple.
Applying $\Hom_{\TT}(\MM[\ge0],-)$ to \eqref{approximation} for any $i\ge0$, we have an isomorphism
\[(a_i\cdot):\Hom_{\TT}(\MM[\ge0],X_i)\xrightarrow{\sim}\Hom_{\TT}(\MM[\ge0],X_{i+1}).\]
Since $\MM[\ge0]$ is compact, Proposition \ref{hocolim} implies that our triangle $S_X\to X\stackrel{d}{\to} U_X\to S_X[1]$ induces an isomorphism
\begin{equation}\label{key iso2}
(d\cdot):\Hom_{\TT}(\MM[\ge0],X)\xrightarrow{\sim}\Hom_{\TT}(\MM[\ge0],U_X).
\end{equation}

(a) Let $X\in{}^\perp(\CC^\perp)$. Then the morphism $d:X\to U_X$ is zero since $U_X\in\CC^\perp$.
Since \eqref{key iso2} is an isomorphism which is zero, we have $\Hom_{\TT}(\MM[\ge0],X)=0$.

(b) By Theorem \ref{torsion pair} we only have to show ${}^\perp(\CC^\perp)\supset\MM[\ge0]^\perp$.
Let $X\in\MM[\ge0]^\perp$. By \eqref{key iso2}, we have $\Hom_{\TT}(\MM[\ge0],U_X)\simeq\Hom_{\TT}(\MM[\ge0],X)=0$.
Thus $U_X\in\CC^\perp\cap\MM[\ge0]^\perp=\MM[\Z]^\perp=0$, and we have $X\simeq S_X\in{}^\perp(\CC^\perp)$.
\end{proof}


\subsection{The correspondence between silting subcategories and t-structures}\label{The correspondence between silting subcategories and t-structures}

Let $\TT$ be a triangulated category with arbitrary coproducts.
For a silting subcategory $\MM$ of $\TT$, we observed in Corollary \ref{HKM2} that we have a t-structure 
$(\TT_{\MM}^{\le0},\TT_{\MM}^{\ge0})$ given by
\begin{eqnarray*}
\TT_{\MM}^{\le0}:={}^\perp(\MM[\ge0]^\perp)=\MM[<0]^\perp\ \mbox{ and }\ \TT_{\MM}^{\ge0}:=\MM[>0]^\perp.
\end{eqnarray*}
The aim of this subsection is to characterize these t-structures obtained from silting subcategories.
The following definition is suggested by Proposition \ref{recover}.

\begin{definition}
Let $(\TT^{\le0},\TT^{\ge0})$ be a t-structure in $\TT$ with the heart $\TT^0$.
Define a subcategory of $\TT$ by
\[\MM=\MM(\TT^{\le0},\TT^{\ge0}):=\TT^{\le0}\cap{}^\perp(\TT^{<0})\cap\TT^{\rm c}.\]
\begin{itemize}
\item[(a)] We say that the t-structure $(\TT^{\le0},\TT^{\ge0})$ is \emph{silting} if $\MM$ is skeletally small and generating.
\item[(b)] We say that a silting t-structure is \emph{tilting} if $\MM\subset\TT^0$.
\end{itemize}
\end{definition}

The names of these t-structures are explained by the following our main results in this subsection.

\begin{theorem}\label{correspondence}
\begin{itemize}
\item[(a)] We have mutually inverse bijections 
\[\MM\mapsto(\TT_{\MM}^{\le0},\TT_{\MM}^{\ge0})\ \mbox{ and }\ (\TT^{\le0},\TT^{\ge0})\mapsto\MM(\TT^{\le0},\TT^{\ge0})\]
between silting subcategories of $\TT$ and silting t-structures of $\TT$.
\item[(b)] These induce bijections between tilting subcategories of $\TT$ and tilting t-structures of $\TT$.
\end{itemize}
\end{theorem}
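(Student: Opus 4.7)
The plan is to verify that the two assignments in (a) are mutually inverse, and then to observe that the tilting condition is transported by the bijection.

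For the forward map $\MM\mapsto(\TT_{\MM}^{\le 0},\TT_{\MM}^{\ge 0})$, Corollary~\ref{HKM2}(b) already gives that this pair is a t-structure, so the essential step is to prove $\MM(\TT_{\MM}^{\le 0},\TT_{\MM}^{\ge 0})=\MM$, which simultaneously certifies this t-structure to be silting (since $\MM$ is skeletally small and generating by hypothesis). The inclusion $\supseteq$ is immediate: every $M\in\MM$ is compact, lies in $\TT_{\MM}^{\le 0}$ by definition, and lies in ${}^{\perp}(\TT_{\MM}^{<0})$ because every $Y\in\TT_{\MM}^{<0}$ satisfies $\Hom_{\TT}(\MM,Y[\ge 0])=0$. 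For the reverse inclusion, fix $X$ in the intersection. Since $X$ is compact, Neeman's Proposition~\ref{Neeman} gives $X\in\thick\MM=\TT^{\rm c}$, so the argument of Proposition~\ref{generate2} applies inside $\TT^{\rm c}$ (where $\MM$ is silting in the sense of Definition~\ref{definition of silting}) and yields $X\in\summand(\MM*\MM[1]*\cdots*\MM[\ell])$ for some $\ell\ge 0$. Since $\MM[i]\subset\TT_{\MM}^{<0}$ for every $i\ge 1$ and $\TT_{\MM}^{<0}$ is closed under extensions, one has $\MM[1]*\cdots*\MM[\ell]\subset\TT_{\MM}^{<0}$; combining $X\in{}^{\perp}(\TT_{\MM}^{<0})$ with Lemma~\ref{reduction}(b) then forces $X\in\summand\MM=\MM$.

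For the inverse map, I would start with a silting t-structure $(\TT^{\le 0},\TT^{\ge 0})$ and set $\MM:=\MM(\TT^{\le 0},\TT^{\ge 0})$. By hypothesis $\MM$ is skeletally small, compact and generating; and for $M,M'\in\MM$ and $i>0$ one has $M'[i]\in\TT^{\le -i}\subset\TT^{<0}$ while $M\in{}^{\perp}(\TT^{<0})$, giving $\Hom_{\TT}(\MM,\MM[>0])=0$. Hence $\MM$ is a silting subcategory. It remains to show $(\TT_{\MM}^{\le 0},\TT_{\MM}^{\ge 0})=(\TT^{\le 0},\TT^{\ge 0})$. The same reasoning gives $\TT^{\le 0}\subset\MM[<0]^{\perp}=\TT_{\MM}^{\le 0}$. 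Dually, for $X\in\TT^{\ge 0}$ and $M\in\MM$ with $i>0$, the shift $M[i]\in\TT^{\le -1}$, and t-structure orthogonality yields $\Hom_{\TT}(M[i],X)=0$; hence $\TT^{\ge 0}\subset\MM[>0]^{\perp}=\TT_{\MM}^{\ge 0}$. Taking right orthogonals of $\TT^{\le -1}\subset\TT_{\MM}^{\le -1}$ reverses the inclusion to $\TT_{\MM}^{\ge 0}\subset\TT^{\ge 0}$, so $\TT_{\MM}^{\ge 0}=\TT^{\ge 0}$; and the identity ${}^{\perp}\TT^{\ge 0}=\TT^{\le -1}$ then forces $\TT_{\MM}^{\le 0}=\TT^{\le 0}$.

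Part (b) is a direct consequence of (a): $\MM$ is tilting iff the additional vanishing $\Hom_{\TT}(\MM,\MM[<0])=0$ holds, iff $\MM\subset\MM[>0]^{\perp}=\TT_{\MM}^{\ge 0}$, iff $\MM\subset\TT_{\MM}^{\le 0}\cap\TT_{\MM}^{\ge 0}$, which is exactly the tilting condition on the corresponding t-structure. The principal technical hurdle is the identity $\MM(\TT_{\MM}^{\le 0},\TT_{\MM}^{\ge 0})=\MM$ in the forward direction, which is a compact analogue of Proposition~\ref{recover}; it only becomes tractable once Neeman's theorem is invoked to place $X$ inside $\thick\MM=\TT^{\rm c}$, so that the $\MM$-filtration of Proposition~\ref{generate2} is available.
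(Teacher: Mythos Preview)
Your proof is correct and follows essentially the same route as the paper. In the forward direction, the paper first shows that $\NN:=\MM(\TT_{\MM}^{\le0},\TT_{\MM}^{\ge0})$ is itself a silting subcategory of $\TT$ (hence of $\TT^{\rm c}$ via Neeman) and then invokes Theorem~\ref{no inclusion} to conclude $\MM=\NN$ from $\MM\subset\NN$; you instead unpack that argument in place, using Proposition~\ref{generate2} and Lemma~\ref{reduction}(b) directly on a given $X\in\NN$. In the inverse direction, the paper obtains the reverse inclusions via the identities $\TT_{\MM}^{\le0}={}^\perp(\TT_{\MM}^{>0})$ and $\TT_{\MM}^{\ge0}=(\TT_{\MM}^{<0})^\perp$ from Corollary~\ref{HKM2}, whereas you use the torsion-pair identities $\TT_{\MM}^{\ge0}=(\TT_{\MM}^{\le-1})^\perp$ and $\TT_{\MM}^{\le-1}={}^\perp(\TT_{\MM}^{\ge0})$; these are the same orthogonality relations viewed from the two sides. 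Part~(b) is handled identically.
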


We need the following observation.

\begin{lemma}
Let $(\TT^{\le0},\TT^{\ge0})$ be an arbitrary t-structure in $\TT$ and $\MM:=\MM(\TT^{\le0},\TT^{\ge0})$. Then
\begin{eqnarray}\label{from M to T 1}
\Hom_{\TT}(\MM,\TT^{\le0}[>0])=0,\\ \label{from M to T 2}
\Hom_{\TT}(\MM,\TT^{\ge0}[<0])=0,\\ \label{vanishing of M}
\Hom_{\TT}(\MM,\MM[>0])=0.
\end{eqnarray}
\end{lemma}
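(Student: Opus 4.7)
The plan is to derive all three vanishings directly from the definition of $\MM$ together with the t-structure axioms. By construction $\MM\subset\TT^{\le 0}$ and $\MM\subset{}^\perp(\TT^{<0})$, so the task reduces to showing that $\TT^{\le 0}[i]$ for $i>0$ lies inside $\TT^{<0}$, and that $\TT^{\ge 0}[i]$ for $i<0$ lies inside $\TT^{\ge 1}:=\TT^{\ge 0}[-1]$.

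The first step is to record two basic inclusions among the aisles of the t-structure. The axiom $\TT^{\le 0}[1]\subset\TT^{\le 0}$ iterates immediately to $\TT^{\le 0}[i]\subset\TT^{\le 0}[1]=\TT^{<0}$ for every $i\ge 1$. Dually, since $(\TT^{<0},\TT^{\ge 0})$ is a torsion pair one has $\TT^{\ge 0}=(\TT^{<0})^{\perp}$, and the obvious inclusion $\TT^{<0}\subset\TT^{\le 0}$ yields $\TT^{\ge 1}=(\TT^{\le 0})^{\perp}\subset(\TT^{<0})^{\perp}=\TT^{\ge 0}$; iterating gives $\TT^{\ge 0}[i]\subset\TT^{\ge 1}$ for every $i\le -1$.

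These two inclusions settle the lemma. For \eqref{from M to T 1}, any $i>0$ gives $\TT^{\le 0}[i]\subset\TT^{<0}$, so $\Hom_{\TT}(\MM,\TT^{\le 0}[i])\subset\Hom_{\TT}(\MM,\TT^{<0})=0$ by the defining property of $\MM$. For \eqref{from M to T 2}, any $i<0$ gives $\TT^{\ge 0}[i]\subset\TT^{\ge 1}$, and the torsion-pair vanishing $\Hom_{\TT}(\TT^{\le 0}[1],\TT^{\ge 0})=0$, rewritten as $\Hom_{\TT}(\TT^{\le 0},\TT^{\ge 1})=0$, finishes the argument since $\MM\subset\TT^{\le 0}$. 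Finally, \eqref{vanishing of M} is immediate from \eqref{from M to T 1} because $\MM[>0]\subset\TT^{\le 0}[>0]$.

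There is no real obstacle; the lemma is a bookkeeping step packaging the three orthogonality facts in the exact form needed to prove Theorem~\ref{correspondence}. The only point requiring care is the indexing convention of the paper: $\TT^{<0}$ denotes $\TT^{\le 0}[1]$ and $\TT^{\ge 1}$ denotes $\TT^{\ge 0}[-1]$, and one must keep these conventions straight throughout the verification.
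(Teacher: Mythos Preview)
Your proof is correct and follows essentially the same approach as the paper's: the paper's own proof is a three-line argument deriving \eqref{from M to T 1} from $\MM\subset{}^\perp(\TT^{<0})$, \eqref{from M to T 2} from $\MM\subset\TT^{\le0}$, and \eqref{vanishing of M} from $\MM\subset\TT^{\le0}$ together with \eqref{from M to T 1}. You have simply unpacked the t-structure axioms (the inclusions $\TT^{\le0}[>0]\subset\TT^{<0}$ and $\TT^{\ge0}[<0]\subset\TT^{\ge1}$) that the paper leaves implicit.
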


\begin{proof}
By $\MM\subset{}^\perp(\TT^{<0}$), we have \eqref{from M to T 1}.
By $\MM\subset\TT^{\le0}$, we have \eqref{from M to T 2}.
By $\MM\subset\TT^{\le0}$ and \eqref{from M to T 1}, we have \eqref{vanishing of M}.
\end{proof}

Now we are ready to prove Theorem \ref{correspondence}.

(a)(i) Let $(\TT^{\le0},\TT^{\ge0})$ be a silting t-structure and $\MM:=\MM(\TT^{\le0},\TT^{\ge0})$.
Then $\MM$ is skeletally small compact and generating by definition. Thus $\MM$ is a silting subcategory by \eqref{vanishing of M}. 

We have $\TT^{\ge0}\subset\TT_{\MM}^{\ge0}$ by \eqref{from M to T 2}, and we have $\TT^{\le0}\subset\TT_{\MM}^{\le0}$ by \eqref{from M to T 1}.
Moreover we have
\begin{eqnarray*}
\TT^{\le0}={}^\perp(\TT^{>0})\supset{}^\perp(\TT_{\MM}^{>0})=\TT_{\MM}^{\le0},\\
\TT^{\ge0}=(\TT^{<0})^\perp\supset(\TT_{\MM}^{<0})^\perp=\TT_{\MM}^{\ge0}
\end{eqnarray*}
by Corollary \ref{HKM2}, so $(\TT^{\le0},\TT^{\ge0})=(\TT_{\MM}^{\le0},\TT_{\MM}^{\ge0})$ holds.

(ii) Let $\MM$ be a silting subcategory of $\TT$.
Then $(\TT_{\MM}^{\le0},\TT_{\MM}^{\ge0})$ is a t-structure of $\TT$ by Corollary \ref{HKM2}.
Let $\NN:=\MM(\TT_{\MM}^{\le0},\TT_{\MM}^{\ge0})$.
Then $\NN$ is skeletally small since $\TT^{\rm c}$ is skeletally small by Remark \ref{T is small}.
We have
\begin{eqnarray}\label{N 1}
\Hom_{\TT}(\NN,\NN[>0])=0
\end{eqnarray}
by \eqref{vanishing of M}.
Since $\MM$ is silting, we have $\MM\subset\MM[<0]^\perp=\TT_{\MM}^{\le0}$.
Since $\TT_{\MM}^{<0}=\MM[\le0]^\perp$, we have
$\Hom_{\TT}(\MM,\TT_{\MM}^{<0})=0$ and so $\MM\subset{}^\perp(\TT_{\MM}^{<0})$.
Consequently we have
\begin{equation}\label{M is in N}
\MM\subset\TT_{\MM}^{\le0}\cap{}^\perp(\TT_{\MM}^{<0})\cap\TT^{\rm c}=\NN.
\end{equation}
Since $\MM$ is generating, so is $\NN$.
By \eqref{N 1}, we have that $\NN$ is a silting subcategory of $\TT$.
In particular, $(\TT_{\MM}^{\le0},\TT_{\MM}^{\ge0})$ is a silting t-structure.

Now $\MM$ and $\NN$ are silting subcategories of $\TT^{\rm c}$ in the sense of Definition \ref{definition of silting} by Proposition \ref{Neeman}.
By \eqref{M is in N} and Theorem \ref{no inclusion} we have $\MM=\NN$.
Thus our correspondences are mutually inverse.

(b) Let $\MM$ be a tilting subcategory of $\TT$.
Then $\MM$ is contained in both $\TT_{\MM}^{\le0}$ and $\TT_{\MM}^{\ge0}$,
so we have $\MM\subset\TT_{\MM}^0$.
Since $\MM(\TT_{\MM}^{\le0},\TT_{\MM}^{\ge0})$ equals to $\MM$,
we have that $(\TT_{\MM}^{\le0},\TT_{\MM}^{\ge0})$ is a tilting t-structure.

Let $(\TT^{\le0},\TT^{\ge0})$ be a tilting t-structure and $\MM:=\MM(\TT^{\le0},\TT^{\ge0})$. Since $\MM\subset\TT^0$, we have
\[\Hom_{\TT}(\MM,\MM[<0])\subset\Hom_{\TT}(\TT^{\le0},\TT^{>0})=0.\]
Thus $\MM$ is a tilting subcategory of $\TT$.
\qed


\end{document}